\theoremstyle{plain}
\newtheorem{thm}{Theorem}[]
\newtheorem{cor}[thm]{Corollary}
\newtheorem{lem}[thm]{Lemma}
\newtheorem{prop}[thm]{Proposition}
\theoremstyle{definition}
\newtheorem{ex}[thm]{Example}
\newtheorem{defn}[thm]{Definition}
\newtheorem{rmk}[thm]{Remark}
\renewcommand{\P}{\mathbb{P}}
\newcommand{\R}{\mathbb{R}}
\newcommand{\Rr}{\mathcal{R}}
\newcommand{\N}{\mathbb{N}}
\newcommand{\F}{\mathcal{F}}
\newcommand{\eps}{\varepsilon}
\newcommand{\Exp}{\text{Exp}}
\newcommand{\lesssimP}{\lesssim_{\mathbb{P}}}
\newcommand{\gtrsimP}{\gtrsim_{\mathbb{P}}}
\newcommand{\bp}{\begin{proof}}
	\newcommand{\ep}{\end{proof}}
\def\bal#1\eal{\begin{align*}#1\end{align*}}
\newcommand{\E}[1]{{\mathbb E}\left[#1\right]}
\newcommand{\p}[1]{{\mathbb P}\left(#1\right)}
\def\1{\mathds{1}}
\numberwithin{equation}{section}
\numberwithin{thm}{section}
\author{Colin Desmarais\thanks{Faculty of Mathematics, University of Vienna, Oskar-Morgenstern-Platz 1, 1090 Vienna, Austria}  \thanks{Faculty of Mathematics and Geoinformation, Vienna University of Technology, Wiedner Hauptstraße 8–10, 1040 Vienna, Austria} \and Emmanuel Schertzer\footnotemark[1] \and Zs\'{o}fia Talyig\'{a}s\footnotemark[1]}
\date{}
\title{$K$-Branching Random Walk with Noisy Selection: \\
\Large Large Population Limits and Phase Transitions}
\begin{document}
	\maketitle

 \begin{abstract}
We analyze a variant of the Noisy $K$-Branching Random Walk, a population model that evolves according to the following procedure. At each time step, each individual produces a large number of offspring that inherit the fitness of their parents up to independent and identically distributed fluctuations. The next generation consists of a random sample of all the offspring so that the population size remains fixed, where the sampling is made according to a parameterized Gibbs measure of the fitness of the offspring. Our model interpolates between classical models of fitness waves and exhibits a novel phase transition in the propagation of the wave. By employing a stochastic Hopf-Cole transformation, we show that as we increase the population size, the random dynamics of the model can be described by deterministic operations acting on the limiting population densities. We then show that for fitness fluctuations with exponential tails, these operations admit a unique traveling wave solution with local stability. The traveling wave solution undergoes a phase transition when changing selection pressure, revealing a complex interaction between evolution and natural selection.
\end{abstract}

\tableofcontents

\section{Introduction}\label{sec:Introduction}

Branching particle systems with selection have attracted significant attention in recent years as powerful models for understanding the interplay between reproduction, competition, and evolutionary dynamics. In these systems, the spatial location of a particle encodes its fitness, with particles further to the right corresponding to fitter individuals. Branching corresponds to reproduction: each particle gives rise to a fixed or random number of children, whose positions (fitness values) are random modifications of their parents' positions.  The selection mechanism enforces that only the rightmost (fittest) particles survive and reproduce. We call this procedure truncation selection. A seminal contribution came from Brunet and Derrida~\cite{BrDe:97,BrDe:99}, who introduced a class of such systems, formulated key conjectures about their behavior, and analyzed an exactly solvable version that has since served as a benchmark for both rigorous and heuristic studies.

A natural extension of this framework is to introduce noisy selection, where survival is no longer restricted to the very fittest individuals but instead determined probabilistically. In this setting, one samples from the population in a way that favors fitter particles: those further to the right have a higher chance of being chosen, while still allowing less fit individuals to occasionally persist. This stochastic relaxation of truncation selection better reflects biological and evolutionary contexts where randomness (luck) influences reproductive success. Such models have been studied by Cortines and Mallein~\cite{CoMa:17}, by Schertzer and Wences~\cite{ScWe:23}, and in a companion paper which investigates the biological implications of the present rigorous mathematical results on a slightly different model~\cite{MPST:25}. These works, as well as the present article, highlight how noise affects the long-term behavior of different branching-selection particle systems.

\subsection{The model}\label{sec:model}

We consider an asexual population of fixed size $K$ in discrete time. To each individual in the population, we associate a real number, which represents the individual's evolutionary fitness. The population evolves at each time step according to two substeps: reproduction and selection. Let $X$ be a real-valued random variable with cumulative distribution function $F_X$, let $\beta>0$ and $m\in \mathbb{N}\setminus\{0\}$. We will use the notation $[n]:=\{1,2,\dots,n\}$ for $n\in\N$. We iterate the following steps:

\begin{description}
\item[Reproduction.] Each of the $K$ individuals produces a fixed number of offspring $m$. These children inherit the fitness of their parent up to an independent random fluctuation. The fitness values of the children have independent i.i.d.~displacements from their parents' fitness with distribution $X$. That is, the child of an individual with fitness $x$ has fitness distributed as $x + X$. 
\item[Random selection.] Following reproduction, the population consists of $N:=m K$ individuals. To regulate the size of the population, we sample $K$ individuals without replacement according to the Gibbs sampling weights $(e^{\beta y_i})_{i=1}^{N}$, where $(y_i)_{i=1}^{N}$ denotes the fitness values of the offspring.
\end{description}

One may also think of this process as a particle system with $K$ particles on the real line at all times, where each particle location represents the fitness value of an individual of the population. In order to define the process more formally, we let $(X_{i,j}^t)_{i\in[K],j\in[m],t\in\N}$ be i.i.d.~random variables distributed as $X$. Let the initial particle configuration be given by $\mathcal{Z}_{K}^{0} := (Z_{i}^{0})_{i=1}^{K}$. Now the process $(\mathcal{Z}_{K}^{t})_{t\in\N}$ is defined inductively as follows. Assume that at some time $t\in\N$, the configuration is given by $\mathcal{Z}_{K}^{t} = (Z_{i}^{t})_{i=1}^{K}$. Then, after the \textbf{reproduction} step, the locations of the offspring particles are given by the collection $(Z_i^t+X_{i,j}^t)_{i\in[K],j\in[m]}$. Now in the \textbf{selection} step, out of the $N=Km$ offspring, we sample $K$ without replacement in such a way that the probability of picking an offspring particle at location $y\in\R$ is proportional to $e^{\beta y}$. The configuration $\mathcal{Z}_{K}^{t+1} = (Z_{i}^{t+1})_{i=1}^{K}$ at time $t+1$ is then given by the locations of the sampled particles.

The case $\beta=\infty$ is a version of the $K$-Branching Random Walk ($K$-BRW), see e.g. \cite{BrDe:97,BrDe:99,BDMM:07}, in which only the $K$ fittest offspring  survive after selection. We will refer to the present model as the noisy $K$-BRW with parameter $(\beta,m)$ and reproduction law $F_{X}$. The aim of the present article is to study the case of high reproduction bias ($m>>1$) and show that an interesting phase transition occurs in the large population limit. See the next section for an interpretation of this parameter regime.

Our work relates to another group of fitness wave models, 
considered, for example, in~\cite{DeFi:07,BrRW:08,Schw:17,Schb:17,NeHa:13}, where reproduction and selection happen in one step: a new individual picks a parent with probability proportional to the parent's fitness (or alternatively, fitter individuals reproduce at higher rates). Although the details of this scheme differ from those of the $K$-BRW type models (where reproduction and selection are performed in two substeps), both approaches are believed to capture the same universal large-population behavior. By varying the parameters of the noisy $K$-BRW we will see that it exhibits behaviors similar to the above classical models in one parameter regime, while we find new phenomena in the other.

\subsection{Large population limit}\label{sec:Tail}

Our first theorem says that, in the noisy $K$-BRW model, the fitness distribution (fitness wave) evolves according to a deterministic dynamical system in the large population limit. Before stating this theorem, we specify our assumptions on the parameters of the model ($F_X,\beta,m$) and on the class of functions which will describe the fitness distribution.

\medskip

\noindent
{\bf Tail assumption.} 
Recall that $N=mK\in\N$ denotes the total number of offspring at each time step.
Let $F_X$ be the reproduction law of the noisy $K$-BRW and let 
\begin{equation} 
\label{eq:cN}
c_N := (1-F_X)^{-1}(1/N) := \inf\{x : 1-F_X(x) \geq 1/N\}.
\end{equation}
Assume that  $c_N\to \infty, \text{ as }N\to\infty$,
and for $\rho >0$ and $c_- \in \R_+\cup\{+\infty\}$ we assume the existence of a function 
\begin{equation}\label{eq:defh}
h(x) := \begin{cases}
-x^\rho & x \geq 0,\\
-c_-|x|^\rho & x < 0,
\end{cases}
\end{equation}
such that 
\begin{equation}\label{eq:Karamata}
\begin{aligned}
\forall x \geq 0, \ \ &\lim_{N \to \infty} \frac{\log(1 - F_X(xc_N))}{\log N} = h(x), \ \ \text{and}\\
\forall x<0, \ \ &\lim_{N \to \infty} \frac{\log(F_X(xc_N))}{\log N} = h(x).
\end{aligned}
\end{equation}
This amounts to considering a sub $(\rho < 1)$ or super $(\rho \geq 1)$ exponential tail on the right. When $c_- = +\infty$, the tail is lighter on the left, while the decay is of the same order on both sides when $c_- < \infty$.


\medskip
\noindent
{\bf Log-profiles.}
Let $\beta>0$, $\gamma \in (0,1)$ and $c_N$ as in \eqref{eq:cN}.
We consider a noisy $\lfloor N^{\gamma}\rfloor$-BRW with Gibbs parameter $\beta_N$ and fertility $m_N$ defined as
\begin{equation}\label{eq:betaNrN}
\beta_{N} := \frac{\beta}{c_N}\log N, \ \ m_{N} := N/\lfloor N^{\gamma}\rfloor.
\end{equation}
Let us briefly comment on our choice of parameters. 
First, we note that $m_{N}\to\infty$. Biologically, this means that every individual reproduces a large number of offspring (or gametes) but only a few of those are selected to the next generation (r-selected population~\cite{MaWi:01}). Children of highly fit individuals will have a better chance to survive selection than their peers, leading to high reproductive bias.

\medskip

The biological significance of the parameter $\gamma$ is as follows. For a fixed value of $N$, a lower $\gamma$ entails a higher selection pressure since only a substantially reduced number ($K:=\lfloor N^\gamma \rfloor$) of individuals can reproduce; whereas a high $\gamma$ (close to $1$) corresponds to a mild selection scheme where a larger set of children survive to the next generation. As a consequence, $\gamma$ can be interpreted as capturing the selection pressure in the population. When $\gamma$ is small, selection pressure is high; when $\gamma$ is close to $1$, selection pressure is low.

\medskip

The parameter $\beta$ measures how noisy the system is: large $\beta$ corresponds to low noise, small $\beta$ corresponds to high noise. Furthermore, by~\eqref{eq:betaNrN}, the Gibbs sampling weight of a particle at a location $xc_N$ is a constant power of $N$: $N^{\beta x}$. Our scaling parameter $c_N$ is defined in~\eqref{eq:cN} in such a way that, for any $y>0$, the tail probability $\p{|X|>yc_N}$ is roughly of order $N^{-y^\rho}$; a constant power of $N$.

\medskip

We now describe a stochastic Hopf-Cole transformation of the system, which allows us to capture the time evolution of the fitness distribution in terms of powers of $N$.
In order to justify the upcoming definition, 
let us use the informal notation $a_N\approx b_N$ to say that $a_N$ and $b_N$ are close to each other in some loose sense when $N$ is large. Let $\mathcal{Z}^t_N$ refer to the configuration of particles at time ${t}$. 
Roughly speaking, we will say that $(\mathcal{Z}^t_N)_{N \in \N}$
admits a \textit{limiting log-profile} if and only if there exists a deterministic function $g^t:\R\to\R_+\cup\{-\infty\}$ such that the number of particles in an interval of size $c_N dx$ around $xc_N $
is close to $N^{g^t(x)} dx$, and so the number of particles in an interval $[ac_N,bc_N]$ is close to $\int_a^b N^{g^t(x)}dx \approx N^{\sup_{[a,b]}g^t}$. A more formal definition is given below. Note that when $g^t(x) =-\infty$ there are no particles around $xc_N$. 

\begin{defn}\label{def: D}
A function $g: \R \to \R_+ \cup \{-\infty\}$ belongs to the class $\mathcal{D}$ if and only if 
\begin{itemize}
\item[(a)] $g$ admits a left limit and a right limit for all $x \in \R$, and
\item[(b)] $g$ has bounded support $\mbox{Supp}(g) = \{x \in \R :\: g(x) \neq -\infty \}$, in which case we define $L(g) \leq U(g)$ such that $[L(g), U(g)]$ is the smallest interval containing $\mbox{Supp}(g)$. We call $L(g)$ and $U(g)$ the {\it lower edge} and {\it upper edge} of $g$ respectively.
\end{itemize}
A function $g: \R \to \R_+ \cup \{-\infty\}$ belongs to the class $\mathcal{C}$ if and only if $g \in \mathcal{D}$ and 
\begin{itemize}
\item[(c)] $g$ is concave on its support.
\end{itemize}
\end{defn}
\noindent
We now formally define what it means for a sequence of point measures to have a limiting log-profile. We denote the discontinuity points of $g$ by $D_g$. 

\begin{defn}[limiting log-profile]\label{def: limitlog}
A sequence of (random) point measures $(\mu_N)_{N \in \N}$ is said to have a {\it limiting log-profile} if there exists a deterministic function $g \in \mathcal{D}$ such that 
for all $a,b\in \R\setminus D_g$ with $a<b$, 
\begin{equation}\label{p: MN}
\frac{\log \mu_N((a,b])}{\log N} \xrightarrow{\P} \sup_{x \in (a,b]} g(x), \text{ as }N\to\infty.
 \end{equation}
\end{defn}

Next, we define two operators $r$ and $s_\sigma$ on the space ${\cal D}$, which describe how the limiting log-profile of the configuration changes after a reproduction and a selection step, respectively. We give a heuristic explanation after the statement of Theorem~\ref{thm:main}. Let us define the function $\pi$ by
\begin{equation}\label{eq:pi}
\pi(x) \ := \ \left\{ \begin{array}{cc}  x & \mbox{if $x\geq 0$} \\ -\infty & \mbox{otherwise.} \end{array} \right.
\end{equation}
For all $g\in{\cal D}$ and $x\in\R$, we let
\begin{eqnarray}
& r(g)(x) & := \pi\left(  \sup_{z\in \R}\left( 1-\gamma +  g(z)+ h(x-z) \right)\right ),  \label{def:r}\\
\mbox{and $\forall \sigma>0$}, & s_\sigma(g)(x) & := \pi\left( g(x) + \beta(x-\sigma)_- \right). \label{def:s}
\end{eqnarray}

We can now state the first main result of this paper. Analogous to the notation in Section~\ref{sec:model}, let $(Z_{i,N}^t)_{i=1}^{N^\gamma}$ denote the particle locations of the noisy $\lfloor N^\gamma \rfloor$-BRW at time $t$. Recall the definitions of $c_N$ and $h$ from~\eqref{eq:cN} and~\eqref{eq:defh} and recall $\beta_N$ and $m_N$ from~\eqref{eq:betaNrN}.

\begin{thm}\label{thm:main}
For all $t\geq 0$, we define the sequence of point measures $(M^t_N)_{N\in\N}$ by
\begin{equation}\label{def-MN}
M^t_N := \sum_{i=1}^{N^\gamma} \delta_{Z^t_{N,i}/c_N}.
\end{equation}
Assume that 
\begin{enumerate}
\item[(i):] the sequence of initial configurations  $(M^0_N)_{N \in \N}$ admits a limiting log-profile $g^0 \in {\cal C}$,
\item[(ii):] the function $h$ is concave on its support, that is $\rho\geq 1$.
\end{enumerate}
Then, for all $t\in\N$, $(M^t_N)_{N \in \N}$ admits a limiting log-profile $g^t \in \mathcal{C}$. Further, $(g^t)$ satisfies a recursive equation in terms of the following discrete ``free boundary'' problem,
\begin{eqnarray}
\forall x\in\mathbb{R},  \ g^{t+1}(x) \ = \ s_{\sigma_t}\circ r(g^t)(x),  \label{eq:dynamics}
\end{eqnarray}
where $\sigma^t$ is defined implicitly through the relation 
\begin{equation}\label{eq: sigmat}
\sigma^t := \inf\left\{ \sigma :\: \sup_{z \in \R}s_\sigma(r(g))(z) \leq \gamma\right\}.
\end{equation}
\end{thm}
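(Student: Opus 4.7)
The natural strategy is induction on $t$. The base case $t=0$ is exactly assumption (i). For the inductive step, assume $M^t_N$ admits the limiting log-profile $g^t\in\mathcal{C}$; I would analyze the two substeps (reproduction and selection) separately, showing that reproduction transforms the log-profile into $r(g^t)$ and that the subsequent selection step yields $s_{\sigma_t}(r(g^t))$, with $\sigma_t$ emerging from a mass-conservation constraint.

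The reproduction step reduces to sums of independent Bernoullis. A parent near $zc_N$ produces $m_N\approx N^{1-\gamma}$ offspring, each of which lands near $xc_N$ with probability $\approx N^{h(x-z)}$ by \eqref{eq:Karamata}. Multiplying by the $\approx N^{g^t(z)}$ parents in a window around $zc_N$ yields $\approx N^{g^t(z)+1-\gamma+h(x-z)}$ offspring near $xc_N$ contributed by that slab of parents, and summing over $z$ on the log scale amounts to taking a supremum, reproducing \eqref{def:r}. To make this rigorous I would cover the compact support of $g^t$ by a finite collection of narrow slabs, bound the offspring count in a fixed test interval $[a,b]c_N$ arising from each slab by a Binomial random variable, and apply Chernoff bounds together with a union bound. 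The concavity of $h$ (assumption (ii)) and of $g^t$ guarantees that the sup-convolution $r(g^t)$ is well-defined, compactly supported and concave.

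The selection step is more delicate. Each offspring near $xc_N$ carries Gibbs weight $e^{\beta_N y_i}=N^{\beta x}$, so the weighted log-profile is $r(g^t)(x)+\beta x$. Sampling $\lfloor N^\gamma\rfloor$ particles without replacement biased by these weights should produce a sharp threshold $\sigma_t$ separating a \emph{saturated} region $x>\sigma_t$, where essentially every offspring survives, from a \emph{sub-saturated} region $x<\sigma_t$, where the acceptance probability decays as $N^{\beta(x-\sigma_t)}$. Combining the two regimes yields the post-selection log-profile $r(g^t)(x)+\beta(x-\sigma_t)_- = s_{\sigma_t}(r(g^t))(x)$, while the constraint that the total count equal $\lfloor N^\gamma\rfloor$ pins down $\sigma_t$ via $\sup_z s_{\sigma_t}(r(g^t))(z)=\gamma$, which is exactly \eqref{eq: sigmat}. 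To turn this heuristic into a proof I would (a) show that the total Gibbs weight and the weighted mass in any fixed test interval concentrate on their deterministic predictions $N^{\sup_z(r(g^t)(z)+\beta z)}$ and $N^{\sup_{[a,b]}(r(g^t)(z)+\beta z)}$, respectively, and (b) represent the without-replacement sample via sequential draws (or the Gumbel/exponential-race construction), so that in the safely saturated region the survival events can be compared with deterministic selection, while in the safely sub-saturated region they can be coupled with independent Bernoulli thinning of matching mean.

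The principal obstacle is precisely this selection step, and in particular the transition at the threshold $\sigma_t$: the without-replacement constraint creates dependencies that must be controlled uniformly in a neighborhood of $\sigma_t$, and existence and uniqueness of $\sigma_t$ must be extracted from the monotonicity of $\sigma\mapsto\sup_z s_\sigma(r(g^t))(z)$ together with the compact support of $r(g^t)$. Once this is in place, closure in $\mathcal{C}$ is automatic: $r$ is a sup-convolution of two concave compactly-supported functions, so $r(g^t)\in\mathcal{C}$; adding the concave term $\beta(x-\sigma_t)_-$ and composing with $\pi$ preserves concavity on the resulting bounded support. Hence $g^{t+1}\in\mathcal{C}$, which closes the induction.
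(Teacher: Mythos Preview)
Your proposal is correct and follows essentially the same route as the paper: induction on $t$, first-moment plus concentration for reproduction to obtain $r(g^t)$, the exponential-race (competing clocks) representation of Gibbs sampling without replacement for selection, a sandwich argument around the threshold $\sigma_t$ whose uniqueness is forced by the concavity of $r(g^t)$, and closure in $\mathcal{C}$ via sup-convolution of concave functions. The only refinement worth noting is that the paper implements the selection step by fixing $\sigma$ and proving that the sub-population whose clocks ring before $N^{-\beta\sigma}$ has limiting log-profile $s_\sigma(r(g^t))$ for \emph{every} $\sigma$, and then sandwiches $M^S_N$ between $M^S_{N,\sigma^*\pm\varepsilon}$; this handles your saturated and sub-saturated regions in one stroke and sidesteps the need to control the without-replacement dependencies directly near the threshold.
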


\medskip

\noindent
\textbf{Heuristics.} Suppose that the sequence of configurations $(M_N^t)_{N\in\N}$ admits a limiting log-profile $g^t$; that is, the number of particles around a location $zc_N$ at time $t$ is roughly $N^{g^t(z)}dz$. Then a first moment argument yields that the averaged number of offspring after reproduction (but before selection) at a location $xc_N$ is 
\begin{equation}\label{eq:ExpRep}
\approx \int N^{g^t(z)}N^{1-\gamma} N^{h(x-z)} dz \approx N^{ \sup_{z}( 1-\gamma +  g^t(z)+ {h(x-z) })},
\end{equation}
where the integral counts the average contribution of every possible parent location, using the fact that we have $\approx N^{g^{t}(z)}dz$ particles at $zc_N$, 
and each of them produces about $N^{1-\gamma}$ offspring with $\approx N^{1 - \gamma + h(x-z)}$ of those offspring around $xc_N$. A Laplace's principle type of argument together with the application of concentration results yields that the stochastic exponent at $x$ converges in probability to the deterministic value $r(g^t)(x)$ defined in~\eqref{def:r}.
The projector $\pi$ encodes the fact that, when the average number of particles at a location is a negative power of $N$, then with high probability, there are no particles at that location. Note that the total number of offspring particles after reproduction is $N$, which is reflected by the fact that $\sup_{x\in\R}r(g^t)(x)=1$ (when $\sup_{x\in\R}g^t(x)=\gamma$).

Next, finding the averaged number of particles sampled in the selection step at every location will lead us to determining the limiting log-profile at time $t+1$.
We need to sample $\lfloor N^{\gamma} \rfloor$ particles without replacement according to the Gibbs weights. For this purpose, we will use an equivalent sampling procedure involving exponential competing clocks, which is convenient for our analysis. We endow each of the $N$ offspring particles with an exponential random variable (clock). If $(x^t_ic_N)_{i=1}^{N}$ is the set of particle locations after reproduction, then the respective rates of the exponential clocks are $e^{\beta_{N} x^t_{i}c_N}=N^{\beta x^t_i}$. We then select the $\lfloor{N^\gamma}\rfloor$ particles with the earliest ring times. We will now argue that the time when $\lfloor{N^\gamma}\rfloor$ clocks have rung, is given by $N^{-\beta\sigma_N}$ for some $\sigma_N\in\R$. 

To determine the limiting log-profile after selection, we first find the limiting log-profile of the configuration of particles, whose clocks rang before time $N^{-\beta\sigma}$ for any $\sigma\in\R$. We show that most particles at a location $x^t_ic_N > \sigma c_N$ will have their clocks ring before time $N^{-\beta \sigma}$, while a particle $x^t_ic_N < \sigma c_N$ has probability $\approx N^{\beta(x^t_i - \sigma)}$ that their clock will ring before time $N^{-\beta \sigma}$. The effect is that, for any fixed $\sigma$, the average number of offspring around $xc_N$, whose clock rang before time $N^{-\beta\sigma}$, is 
\begin{equation}\label{eq:ExpSamp}
\approx N^{r(g^t)(x) + \beta(x - \sigma)_-}dx.
\end{equation}
Applying Laplace's principle and concentration results once more, we conclude that the stochastic exponent of the number of particles at $xc_N$ whose clock rang before time $N^{-\beta\sigma}$ converges in probability to $s_\sigma \circ r (g^t)(x)$ (see~\eqref{def:s}). Thus, we find the total number of such particles by integrating over all locations; and since we need this to be $\sim N^\gamma$, we solve
\begin{equation}\label{eq:ExpSigma}
N^\gamma \approx \int N^{(s_{\sigma^t} \circ r )(g^t)(x)} dx \approx N^{\sup_x (s_{\sigma^t} \circ r )(g^t)(x)}
\end{equation}
for $\sigma_t$, and we will find that $N^{-\beta\sigma_t}$ is indeed the deterministic limit of the times $(N^{-\beta\sigma_N})_N$, when $N^\gamma$ clocks have rung. Therefore, we will be able to conclude that the limiting log-profile after selection, at time $t+1$ is
$$
g^{t+1} \ = \ s_{\sigma_t}\circ r(g^t),
$$
where $\sigma_t$ is given by the implicit relation \eqref{eq: sigmat}.

The heuristics above only apply where the first moment argument yields a non-zero exponent; otherwise we lack the information needed to apply our second moment computation and we cannot determine whether or not there are any particles around that location. This is  illustrated in Section \ref{sec:Examples}. As a benefit of assuming (i) and (ii) of Theorem \ref{thm:main}, we show that the functions in the exponents of \eqref{eq:ExpRep} and \eqref{eq:ExpSamp} after our first moment argument are also concave, and within any interval whose endpoints are continuity points of the functions $r(g^t)$ and $s_\sigma \circ r (g^t)$ respectively, the maximum value attained is non-zero. As we have defined limiting log-profiles in Definition \ref{def: limitlog} outside of discontinuity points, we may indeed apply our second moment computation and prove that $r(g^t) \in \mathcal{C}$ and $s_{\sigma} \circ r(g^t) \in \mathcal{C}$ are the limiting log-profiles of our processes after reproduction and sampling according to exponential clocks respectively.

\medskip

We show in Section \ref{sec:Reproduction} that the argument laid out above for reproduction generalizes to $g^t \in \mathcal{D}$ and $\rho >0$, and a limiting log-profile exists. However, a limiting log-profile after selection cannot be guaranteed in general, as illustrated in Section \ref{sec:Examples}. Under some additional conditions, our argument for selection extends to limiting log-profiles $g^t \in \mathcal{D}$, and $\rho>0$ which is captured in Theorem \ref{thm:mainGeneral}.

\subsection{Traveling wave solutions and the phase transition}

Theorem~\ref{thm:main} establishes the dynamical system that describes the behavior of the fitness wave in the noisy $\lfloor N^\gamma \rfloor$-BRW model. Therefore, in order to characterize this stochastic model, we may analyze the dynamical system~\eqref{eq:dynamics}. We are particularly interested in how the noise parameter $\beta$ and the selection pressure $\gamma$ affect the shape and speed of the fitness wave. Note that the latter can be interpreted as the rate of adaptation of the population to its environment. 

We have explicit results for the particular case when the displacement distribution $F_X$ has exponential tails. Throughout this section, we assume that $\rho = 1$ for $h$ in \eqref{eq:defh}. We say that the dynamics (\ref{eq:dynamics}) admits a traveling wave solution if and only if there exists $(G,\nu)$ such that
if we initialize the profile $g^0$ at $G$, i.e.~$g^0=G$, then 
$$
\forall x\in\R, \forall t\geq0, \ \ g^{t+1}(x+t \nu) = G(x).
$$
Traveling wave solutions are always defined up to translation and provided that $\mbox{Supp}(G)$ is bounded, we take the convention that $U(G) = 0$, where we recall from Definition~\ref{def: D} that $U(G)$ is the upper edge of the profile. For the space of functions in ${\cal D}$, we define the metric
\[ \phi(g_1,g_2) := \sup_{x \in \R}\left\lvert g_1(x)_+ - g_2(x)_+\right\rvert \vee |U(g_1) - U(g_2)| \vee |L(g_1) - L(g_2)|.\]

\noindent
{\bf Selection of the fittest and luckiest phase transition.} 
Theorem~\ref{thm:transition} says that there is a unique traveling wave solution to the dynamics~\eqref{eq:dynamics}. Moreover, later in this manuscript, we provide an explicit expression for this traveling wave solution. We will prove that it can be described as 
concave piecewise linear functions as illustrated in Fig. \ref{fig:G}.
The other message of Theorem~\ref{thm:transition} is that
the noisy $\lfloor N^\gamma \rfloor$-BRW exhibits a phase transition. This phase transition is 
characterized in two ways: by the rate of adaptation and by the geometry of the wave. 

\begin{thm}
\label{thm:transition}
Let $\beta>0$, $\gamma\in(0,1)$ and assume that $\rho=1$ in~\eqref{eq:defh}. Then, the dynamical system~\eqref{eq:dynamics} admits a unique traveling wave solution $G$ (explicitly given by Proposition~\ref{prop:unique}) with speed $\nu$. Furthermore, let
\begin{equation}\label{eq:gam}
k := k(\beta) = \left\{\begin{array}{cc}
\lfloor\frac{1}{\beta}\rfloor & \mbox{if $\frac{1}{\beta}\notin\N$} \\ \frac{1}{\beta}-1 & \mbox{otherwise} 
\end{array}\right. \quad \text{and} \quad
 \gamma_c(\beta) :=  \frac{k(2-(k+1)\beta)}{(k+1)(2 - k\beta)}.
\end{equation}
Then the traveling wave solution exhibits the following phase transition. For a fixed value of $\beta$,
\begin{enumerate}
\item[(a)] {\bf Selection of the luckiest}: If $\gamma<\gamma_{c}(\beta)$,
\begin{enumerate}
\item[(a.1)] the speed of the wave is given by 
$$
\nu(\gamma,\beta)=\frac{2\gamma}{k(2-(k+1)\beta)}. 
$$
In particular, it is increasing in $\gamma$.
\item[(a.2)] If $g^0=G$, then  $\forall x\in \mbox{Supp}(g^1), \ \ r(g^0)(x)> g^1(x).$
\end{enumerate}
\item[(b)] {\bf Selection of the fittest}: If $\gamma \geq \gamma_{c}(\beta)$,
\begin{enumerate}
\item[(b.1)]  the speed of the wave is given by 
$$
\nu(\gamma,\beta)=1-\gamma.
$$
In particular, it is decreasing in $\gamma$.
\item[(b.2)] If $g^0=G$, then
$
\forall x\in [-\chi + 1-\gamma,1-\gamma], \ \ r(g^0)(x)= g^1(x) 
$ where 
\begin{equation}
\chi(\gamma,\beta) := \gamma - \sum_{j=1}^k(1-j\beta)(1-\gamma), \label{eq:defchi}
\end{equation}
which is strictly positive on $(\gamma_c,1)$.
\end{enumerate}
\end{enumerate}
\end{thm}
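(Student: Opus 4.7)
The plan is to search for traveling wave solutions $G$ that are concave and piecewise linear, derive $G$ explicitly by matching the functional equation $G(\,\cdot - \nu) = s_\sigma(r(G))$ segment by segment, and identify the phase transition through the sign of an auxiliary parameter $\chi$ arising from this matching.

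I first analyze the action of $r$ and $s_\sigma$ on concave piecewise linear profiles. Since $\rho=1$, the function $h$ is itself piecewise linear with slopes $c_-$ on the left and $-1$ on the right, so $r(g)$ is, up to the additive constant $1-\gamma$, a sup-convolution which merges the slope set of $g$ with $\{-1,c_-\}$; concretely, when $\sup g = \gamma$ it appends a slope-$(-1)$ segment of length $1-\gamma$ past the right edge of $g$'s support and a slope-$c_-$ tail on the left truncated by $\pi$. The operator $s_\sigma$ adds $\beta$ to every slope strictly to the left of $\sigma$ and then truncates the negative part. The ansatz for $G$ is a concave piecewise linear profile with peak value $\gamma$ at some $x^*$, $k$ linear pieces on the left of $x^*$ carrying the slopes $\beta, 2\beta, \ldots, k\beta$ (increasing going leftward), and on the right of $x^*$ a slope-$(-1)$ segment together with additional pieces determined by the regime; the left-side slopes $j\beta$ are forced because each application of $s_\sigma$ adds $\beta$ to every slope left of the cutoff, so stationarity requires this incremented family of slopes to close on itself. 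Substituting the ansatz into the traveling wave equation and matching piecewise yields a linear system for the segment widths, the cutoff $\sigma$, the peak position and the speed $\nu$, and the condition $\sup s_\sigma \circ r(G) = \gamma$ from~\eqref{eq: sigmat} fixes $\chi = \gamma - (1-\gamma)\sum_{j=1}^{k}(1-j\beta)$, which is interpreted as the width of the slope-$(-1)$ segment on the right of the peak of $G$.

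Two regimes then emerge depending on the sign of $\chi$. If $\chi > 0$, the slope-$(-1)$ segment survives and the cutoff $\sigma$ falls strictly in its interior, so selection is inactive on $[\sigma, U(r(G))]$; therefore $g^1 = r(G)$ there, the upper edge advances by $1-\gamma$ each step, and the ``selection of the fittest'' speed is $\nu = 1-\gamma$. If $\chi \leq 0$, the slope-$(-1)$ segment has vanished and selection reshapes every linear piece of the profile; balancing the slopes and widths then gives the ``selection of the luckiest'' speed $\nu = 2\gamma/(k(2-(k+1)\beta))$. Setting $\chi = 0$ and simplifying recovers $\gamma_c(\beta) = k(2-(k+1)\beta)/((k+1)(2-k\beta))$ from~\eqref{eq:gam}, and a direct calculation shows that the two speed formulas agree at $\gamma_c$, as required for continuity across the transition.

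The main obstacle is uniqueness: for any $G \in \mathcal{C}$ satisfying $G(\,\cdot - \nu) = s_\sigma \circ r(G)$, the set of slopes of $G$ must be preserved by one iteration of $s_\sigma \circ r$ and closed under the shift $s \mapsto s+\beta$ until the slopes would leave the admissible range determined by $h$; combined with the normalizations $\sup G = \gamma$ and $U(G) = 0$, this rigid constraint forces $G$ to coincide with the piecewise linear profile constructed above, matching the explicit form stated in Proposition~\ref{prop:unique}.
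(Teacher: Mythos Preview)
Your plan follows the same broad strategy as the paper: reduce the traveling wave equation to a piecewise linear problem, track how $r$ and $s_\sigma$ act on the slopes, and read off the phase transition from the resulting finite-dimensional system. The identification of $\chi$ as the width of the slope-$(-1)$ segment and the dichotomy based on its sign is exactly how the paper organizes the two regimes.

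There is, however, a genuine gap in your uniqueness argument. You assert that for any $G\in\mathcal{C}$ satisfying the traveling wave equation, ``the set of slopes of $G$ must be preserved \ldots\ this rigid constraint forces $G$ to coincide with the piecewise linear profile.'' But a general $G\in\mathcal{C}$ need not have a discrete set of slopes at all, so the slope-closure reasoning does not apply directly. The paper closes this gap with a separate structural result (its Theorem~\ref{thm:pwl}): starting from \emph{any} $g^0\in\mathcal{D}$, after finitely many iterations of $s\circ r$ the profile lands in the piecewise linear class $\mathcal{T}$; since a traveling wave is a fixed point up to translation, it must already lie in $\mathcal{T}$. Only once this is established does the paper reduce uniqueness to a finite-dimensional fixed-point problem, which it solves by encoding the segment widths as a vector $\bm{v}(G)$ and showing that the induced affine maps (with matrices $A_{-1}$ and $A_0$ in the two regimes) have unique fixed points --- via Perron--Frobenius for $A_{-1}$ and the Enestr\"om--Kakeya theorem for $A_0$. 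Your proposal needs an argument of this kind; without it, you have not ruled out a smooth concave traveling wave.

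A minor point: your description of the profile's geometry is not quite right. The peak of $G$ sits at $x_{k+1}$, and the $k+1$ segments to its \emph{right} carry slopes $-(1-j\beta)$ for $j=0,1,\ldots,k$ (the slope-$(-1)$ piece being the rightmost, present only when $\chi>0$); the segments to the \emph{left} of the peak carry slopes $-(1-j\beta)$ for $j=k+1,k+2,\ldots$, i.e.\ $(j\beta-1)$, not $\beta,2\beta,\ldots,k\beta$ as you wrote. This does not affect the logic of your outline but would cause errors if you tried to carry out the segment-by-segment matching as stated.
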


\noindent
\textbf{Local convergence.}
Our next theorem shows local convergence to the traveling wave solution $G$ of~\eqref{eq:dynamics}. We discuss some of the implications after the statement of Theorem~\ref{thm:speed}. Recall the critical value $\gamma_c(\beta)$ from~\eqref{eq:gam}.

\begin{thm}\label{thm:speed}
Let $\beta>0$, $\gamma\in(0,1)$ and assume that $\rho=1$ in~\eqref{eq:defh}. If we also assume the technical conditions $\gamma\neq\gamma_{c}(\beta)$ and $1/\beta\notin \N$ whenever $\gamma > \gamma_c(\beta)$, then the traveling wave solution $G$ of~\eqref{eq:dynamics} is locally stable in the sense that 
there exists $\delta>0$ such that if $g^0\in{\cal D}$ and 
$$
\phi(g^0,G) \leq \delta
$$
then there exists $c \in \R, C>0$ and $r\in(0,1)$ such that
$$
\forall t\in\N, \ \ \ \phi(g^t(\cdot + \nu t + c), G)\leq  C r^t.
$$
\end{thm}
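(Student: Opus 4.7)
The plan is to prove local stability of $G$ by recasting the dynamics in the co-moving frame and showing that the shifted one-step map is a strict contraction on a neighborhood of $G$ in the metric $\phi$. Define the shifted operator $T$ by $T(g)(x) := s_{\sigma(g)}(r(g))(x+\nu)$, where $\sigma(g)$ is the implicit threshold from~\eqref{eq: sigmat}. By definition of the traveling wave, $T(G) = G$. The conclusion of the theorem then follows from exhibiting a neighborhood $\mathcal{U}$ of $G$ and a rate $q \in (0,1)$ such that $\phi(T(g), G) \leq q\,\phi(g, G)$ on $\mathcal{U}$, and iterating; the additive constant $c$ in the statement absorbs the (at most $O(\phi(g^0, G))$) recentering needed to align $g^0$ with the fixed frame in which $T(G) = G$.

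The Lipschitz regularity of $T$ near $G$ splits into three parts. Since $\rho = 1$, the function $h$ is a concave tent, and $r$ is a truncated sup-convolution; standard properties of supremal convolutions give that $r$ is pointwise $1$-Lipschitz with respect to the sup-norm portion of $\phi$, and the edges $U(r(g))$, $L(r(g))$ depend continuously on $g$ provided the active maximizer structure of $G$ is stable. The threshold map $g \mapsto \sigma(g)$ is defined implicitly by $\sup_x s_\sigma(r(g))(x) = \gamma$, and the explicit piecewise linear form of $G$ from Proposition~\ref{prop:unique} shows that this supremum is a strictly monotone piecewise linear function of $\sigma$ at $g = G$; the implicit function theorem then yields $\sigma(g) = \sigma(G) + O(\phi(g, G))$. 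Finally, $s_\sigma$ is Lipschitz in both $g$ and $\sigma$ by direct inspection, so the composite $T$ is Lipschitz on $\mathcal{U}$.

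To obtain a strict contraction rate, I would expand $T$ to first order at $G$. Writing $g = G + \eta$ with $\eta$ small, and letting $z^*(x)$ denote the maximizer in the definition of $r(G)(x)$, one has $r(g)(x) = r(G)(x) + \eta(z^*(x)) + o(\|\eta\|_\infty)$, so $r$ transports perturbations through the map $z^*$. In phase (b), property (b.2) identifies a non-degenerate interval on which $r(g^0) = g^1$, which forces $z^*$ to have slope strictly less than $1$ on the interior of the support; combined with the fact that $\nu = 1 - \gamma$ exactly matches the forward displacement of the maximizers, this transport strictly contracts the positive part of $\phi$. In phase (a), property (a.2) says $r(g^0) > g^1$ strictly on $\mbox{Supp}(g^1)$, so $s_\sigma$ truncates a positive region of $r(g)$; this strict truncation, together with the selection slope $\beta(x-\sigma)_-$ acting below the threshold, provides contraction of both the sup-norm and the edge terms, with an explicit rate computable from $\beta$ and $\gamma$.

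The main obstacle will be controlling the edge terms $|U(g_1) - U(g_2)|$ and $|L(g_1) - L(g_2)|$ in $\phi$, which can in principle respond discontinuously to perturbations when a supremum is attained at multiple points or when $\sigma(g)$ crosses a slope-change of $r(g)$. The genericity hypotheses $\gamma \neq \gamma_c(\beta)$ and $1/\beta \notin \N$ (the latter in phase (b)) are precisely what rule out these degenerate configurations at $g = G$: they guarantee that each active maximizer, and the threshold $\sigma(G)$, sit at unique, non-critical points of the piecewise linear graph of $G$. Verifying that these conditions give a uniform spectral gap across both phases, and hence a single rate $q < 1$, is the technical heart of the argument; once that is in hand, exponential convergence at rate $q^t$ follows by straightforward iteration.
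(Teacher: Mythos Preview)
Your high-level strategy---show that the shifted one-step map $T$ is a strict $\phi$-contraction on a neighborhood of $G$ and iterate---is natural, but it has a genuine gap: in the selection-of-the-fittest phase, $T$ is \emph{not} a one-step contraction. By property (b.2), on the interval $[-\chi+1-\gamma,1-\gamma]$ one has $g^1(x)=r(g^0)(x)=1-\gamma+g^0(x)$, so a perturbation $\eta$ of $G$ on that interval is simply translated by $\nu=1-\gamma$ without any shrinkage in amplitude. The contraction only emerges after the perturbation has propagated through all $k$ linear segments of the front; algebraically, the linearized dynamics on the front are governed by the companion-type matrix $A_0$ whose spectral radius is below $1$ (this is precisely where the Enestr\"om--Kakeya argument and the condition $1/\beta\notin\N$ enter), but whose operator norm in any natural norm exceeds $1$. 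So at best some power $T^n$ is a contraction, and your claimed inequality $\phi(T(g),G)\le q\,\phi(g,G)$ fails as stated.

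Relatedly, you misidentify the role of the hypothesis $1/\beta\notin\N$: it is not about generic position of maximizers or of $\sigma(G)$ relative to slope-changes, but about the characteristic polynomial $\lambda^k+\cdots+\lambda+(1-k\beta)/\beta$ of $A_0$ having all roots strictly inside the unit disk. When $1/\beta\in\N$ the roots are the nontrivial $(k+1)$th roots of unity and the linearized dynamics are periodic, so no local contraction exists. Your heuristic about ``$z^*$ having slope strictly less than $1$'' also breaks down: on the plateau in phase (b) the maximizer is $z^*(x)=x$, so the transport map is the identity there. The paper circumvents all of this by first absorbing the dynamics into the finite-dimensional space $\mathcal T$ of piecewise linear profiles (Theorem~\ref{thm:pwl}), encoding the front by the vector $\bm v(g^t)\in\R^k$, and proving geometric convergence of $\bm v(g^t)$ via Perron--Frobenius (phase (a)) or Enestr\"om--Kakeya (phase (b)); the $\phi$-convergence and the constant $c=\sum_{t\ge 0}\big((U^{t+1}-U^t)-\nu\big)$ are then read off from the vector convergence via continuity lemmas. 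If you want to salvage a direct functional-analytic argument, you would need to establish contraction for an iterate $T^n$ rather than for $T$, and the cleanest way to do that is essentially to rediscover the finite-dimensional reduction.
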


\begin{figure}[h!]
\centering
\begin{tikzpicture}[xscale = 4, yscale = 3, domain=1:1]
\draw[<->, thick] (-2.1,0) -- (0.75,0);
\draw[<-, thick] (0, 1.2) -- (0,-0.05) node[below]{0};
\draw[dashed] (-2.1,0.4) -- (0.75,0.4) node[right]{$\gamma = 0.4$};
\draw[dashed] (-2.1,1) -- (0.75,1) node[right]{$1$};
\draw[very thick, red] (0,0) -- (-1/3,7/30) -- (-2/3,11/30) -- (-1,12/30) -- (-4/3,10/30) -- (-5/3,5/30)-- (-15/8,0);
\draw ( -1/3,0.05) -- (-1/3,-0.05) node[below]{$-\nu$};
\draw ( -2/3,0.05) -- (-2/3,-0.05) node[below]{$-2\nu$};
\draw ( -3/3,0.05) -- (-3/3,-0.05) node[below]{$-3\nu$};
\draw ( -4/3,0.05) -- (-4/3,-0.05) node[below]{$-4\nu$};
\draw ( -5/3,0.05) -- (-5/3,-0.05) node[below]{$-5\nu$};

\draw(0.6,0.05) -- (0.6,-0.05) node[below]{$1-\gamma$};
\draw[decoration={brace,amplitude=2mm}, decorate, teal, thick] (0,0) -- (1/3,0);
\node at (1/6, 0.08) {$\nu$};

\draw[very thick, black] (0.6,0) -- (0,0.6) -- (-1/3,25/30) -- (-2/3,29/30) -- (-1,1) -- (-4/3,28/30) -- (-5/3,23/30)-- (-15/8,18/30);

\draw[very thick, blue] (1/3,0) -- (0,7/30) -- (-1/3,11/30) -- (-2/3,12/30) -- (-1,10/30) -- (-4/3,5/30)-- (-37/24,0);
\end{tikzpicture}

\begin{tikzpicture}[xscale = 4, yscale = 3, domain=1:1]
\draw[<->, thick] (-2.6,0) -- (0.55,0);
\draw[<-, thick] (0,1.2) -- (0,-0.05) node[below]{$0$};
\draw[dashed] (-2.6,0.6) -- (0.55,0.6) node[right]{$\gamma=0.6$};
\draw[dashed] (-2.6,1) -- (0.55,1) node[right]{$1$};
\draw[very thick, red] (0,0) -- (-12/100,12/100) -- (-52/100,4/10) -- (-92/100, 56/100) --(-132/100, 6/10) -- (-172/100, 52/100) -- (-212/100, 32/100) -- (-252/100, 0);
\draw[dashed] (-12/100,12/100) -- (-12/100,-0.05) node[below] {$-\chi$};
\draw (-52/100,0.05) -- (-52/100,-0.05) node[below] {\small$-\chi-\nu$};
\draw (-92/100,0.05) -- (-92/100,-0.05) node[below] {\small$-\chi-2\nu$};
\draw (-132/100,0.05) -- (-132/100,-0.05) node[below] {\small$-\chi-3\nu$};
\draw (-172/100,0.05) -- (-172/100,-0.05) node[below] {\small$-\chi-4\nu$};
\draw (-212/100,0.05) -- (-212/100,-0.05) node[below] {\small$-\chi-5\nu$};
\draw (-252/100,0.05) -- (-252/100,-0.05) node[below] {\small$-\chi-6\nu$};

\draw[very thick, black] (0.4,0) -- (0,0.4) -- (-12/100,52/100) -- (-52/100,8/10) -- (-92/100, 96/100) --(-132/100, 1) -- (-172/100, 92/100) -- (-212/100, 72/100) -- (-252/100, 4/10);
\draw[very thick, blue] (0.4,0) -- (28/100,12/100) -- (-12/100,4/10) -- (-52/100, 56/100) --(-92/100, 6/10) -- (-132/100, 52/100) -- (-172/100, 32/100) -- (-212/100, 0);

\draw(0.4,0.05) -- (0.4,-0.05) node[below]{$1-\gamma$};
\draw[decoration={brace,amplitude=2mm}, decorate, teal, thick] (0,0) -- (0.4,0);
\node at (0.2, 0.1) {$\nu$};
\end{tikzpicture}

\caption{Two examples of $G$ (in red) where $c_- = \infty$ and $\beta = 0.3$, along with the limiting log-profiles after reproduction $r(G)$ (in black) and the limiting log-profile after selection $s \circ r (G) = G( \cdot -\nu)$ in blue. The first plot corresponds to the selection of the luckiest regime with $\gamma = 0.4$, and the second plot corresponds to the selection of the fittest regime with $\gamma = 0.6$. In both plots, $G$ consists of line segments with slopes $-(1-6\beta), \ldots, -(1-\beta)$, and a further line segment with slope $-1$ in the second plot.}
\label{fig:G}
\end{figure}
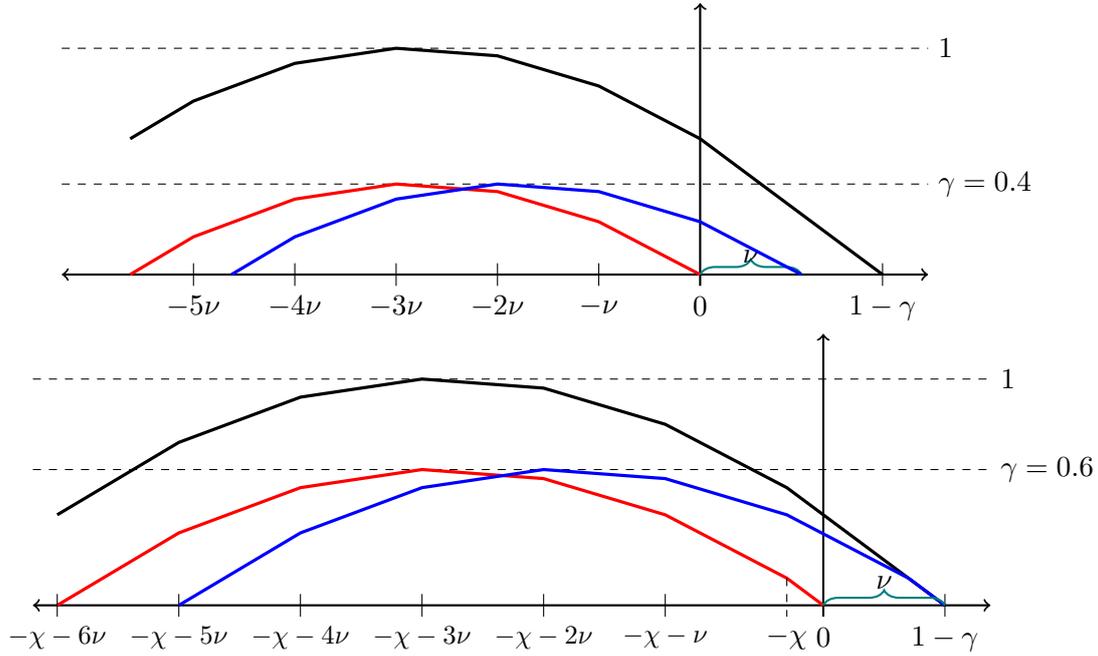

\noindent
\textbf{Interpretation.}
To understand the significance of the theorems above, we recall that
$\gamma$ captures the selection pressure (high $\gamma$ means low selection pressure). Theorem~\ref{thm:transition} says that for any $\beta>0$ there is a critical value of $\gamma$ that maximizes the speed of the traveling wave. That is, the function $\gamma\mapsto\nu(\gamma,\beta)$ is maximized at $\gamma=\gamma_c(\beta)$ from~\eqref{eq:gam} (see also Fig.~\ref{fig:betaCrit}). In the regime to the left of $\gamma_c$, as we decrease $\gamma$ from $\gamma_c$ to $0$, selection becomes stricter: fewer of the $N$ children are allowed to reproduce. We say that selection is `too strict' in this case, in the sense that the individual with the largest fitness (i.e.~the rightmost particle) after reproduction is not selected. This is the message of the statement (a.2) and it is illustrated in the first plot of Fig~\ref{fig:G}. The rightmost selected particle (right edge of the blue curve) is to the left of the rightmost particle after reproduction (right edge of the black curve): the few fittest individuals after reproduction do not have enough weight to survive under high selection pressure. As a result, as Theorem~\ref{thm:transition} and Fig.~\ref{fig:betaCrit} show, higher selection pressure is detrimental to the rate of adaptation, that is, the speed of the wave is increasing in $\gamma$. We call this regime selection of the luckiest, because it is the noise that determines which of the children survive to the next generation rather than the actual fitness of the children.

On the other hand, when $\gamma\geq\gamma_c$, more of the $N$ children are allowed to reproduce and enough for the fittest to be selected, hence, we call this regime selection of the fittest. The second plot of Fig~\ref{fig:G} shows that there is an interval of length $\chi$, in which the log-profile after reproduction (black) agrees with the log-profile after selection (blue). That is, there is an interval at the front of the wave, in which every offspring particle is selected to survive. This is the message of statement (b.2) in Theorem~\ref{thm:transition}. Once $\gamma$ is large enough that the fittest individual survives, further increasing $\gamma$ will mean that selection is `not strict enough': the average fitness of the selected individuals decreases by increasing $\gamma$ and therefore the speed decreases, as shown in Fig.~\ref{fig:betaCrit}.

Comparing the geometry of the traveling wave solutions in the two regimes, we find that the slope of $G$ at the front (near zero) differ in the two cases: in the selection of the luckiest case it is $-(1-\beta)$, whereas in the selection of the fittest regime it is $-1$. This property also reflects the fact that in the latter case there is a segment at the front of the population where every offspring survives selection.

Theorem~\ref{thm:transition} and Fig~\ref{fig:betaCrit} also show that the speed of the traveling wave is monotone increasing in $\beta$, which is not surprising: as noise decreases speed increases. We also observe in this figure that $\gamma_c(\beta)$ decreases in $\beta$, which means that, if we optimize on the speed, we have to be more selective when selection is less noisy.

The paper~\cite{MPST:25} is closely related to our work and provides a more detailed description of the two regimes, focusing on the biological implications for a different version of our model. In that paper the noisy $\lfloor N^\gamma \rfloor$-BRW is studied with the same reproduction step as in the present article, but with a selection step different from Gibbs sampling.
Each individual is assumed to have a genotype and a phenotype, both represented by a real number. The genotype and phenotype of a child are distributed as $x+X$ and $x+X+Y$ respectively, where $x$ denotes the genotype of the parent, $X\sim$ Laplace$(1)$ represents random mutations, and $Y\sim$ Laplace$(1/\mu)$ represents the noise between genotype and phenotype, and where the Laplace distribution with parameter $b>0$ has density $\frac{1}{2b}e^{-|x|/b}$. Then, in the selection step, out of the $N$ offspring, the offspring with the $\lfloor N^\gamma \rfloor$ largest phenotypes are selected. The next generation inherits the genotypes of the selected individuals up to random mutation.

This model also exhibits the phase transition between selection of the fittest and selection of the luckiest regimes, however, as opposed to the present work, the paper~\cite{MPST:25} does not provide a rigorous proof for the convergence to the deterministic dynamics or for the local convergence to the traveling wave. The selection of the luckiest regime in this model has an interesting interpretation; namely, it can be thought of as a Goodhart's law~\cite{Good:84} in evolution. In general, Goodhart's law says that `When a measure becomes a target, it ceases to be a good measure.' That is, when selection is highly noisy, selecting individuals only with the few largest observed phenotypes is not a good way to find individuals with the largest genotypes. One can certainly think of the same interpretation of the present article.

\bigskip

\begin{figure}[h!]
\centering

\begin{tikzpicture}[xscale = 9, yscale = 3, domain=1:1]
\draw[thick, ->] (0, 0) -- (1.1,0) node[below right] {$\gamma$};
\draw[thick, ->] (0, 0) -- (0,1.2) node[above left] {$\nu$};

\draw (0.025, 1) -- (-0.025, 1) node[left] {$1$};
\draw (1,0.075) -- (1,-0.075) node[below] {$1$};

\draw[dashed] (0.0909,0.9091) -- (0.0909,0) node[below] {$\gamma_c(0.9)$};
\draw[teal, samples = 10, thick, domain=0.001:0.0909, variable=\x] plot({\x},{(2*\x)/(2-2*0.9)});
\draw[teal, samples = 10, thick, domain=0.0909:1, variable=\x] plot({\x},{1-\x});

\draw[dashed] (0.2857,0.7143) -- (0.2857,0) node[below] {$\gamma_c(0.6)$};
\draw[red, samples = 10, thick, domain=0.001:0.2857, variable=\x] plot({\x},{(2*\x)/(2-2*0.6)});
\draw[red, samples = 10, thick, domain=0.2857:1, variable=\x] plot({\x},{1-\x});
        
\draw[dashed] (0.54545,0.45455) -- (0.54545,0) node[below] {$\gamma_c(0.3)$};
\draw[blue, samples = 10, thick, domain=0.001:0.54545, variable=\x] plot({\x},{(2*\x)/(3*(2-4*0.3))});
\draw[blue, samples = 10, thick, domain=0.54545:1, variable=\x] plot({\x},{1-\x});

\end{tikzpicture}
\caption{Plots of the speed of the traveling wave $\nu(\beta,\gamma)$ for 3 different values of $\beta$: $\beta = 0.3$ (in blue), $\beta=0.6$ (in red), and $\beta = 0.9$ (in teal).}
\label{fig:betaCrit}
\end{figure}
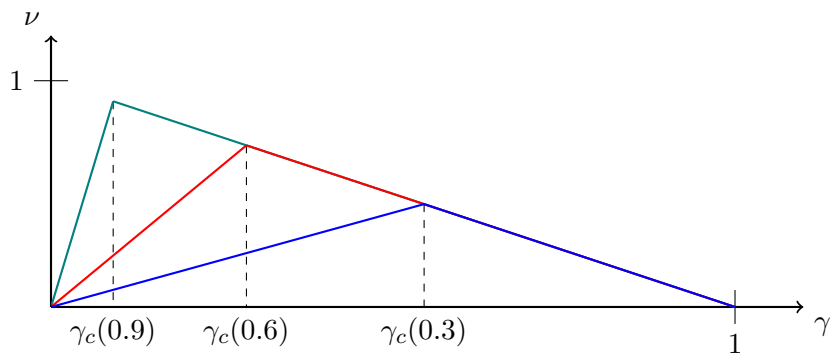

\medskip

\subsection{Conjectures and related work}

Brunet and Derrida~\cite{BrDe:97,BrDe:99} considered a version of a $K$-BRW and (heuristically) argued that the distribution of particles should be given by a deterministic FKPP-type equation, which admits traveling wave solutions; and that the speed of the stochastic system should be given by the minimal velocity solution of the deterministic equation. These conjectures led to several rigorous results, for example on the convergence of branching-selection particle systems to free boundary problems~\cite{DuRe:11} and \cite{BeBP:19,DFPS:17}; on the speed of the traveling wave solution of the FKPP equation with cut-off~\cite{DuPK:07} and with random noise~\cite{MuMQ:08}; and on the limiting velocity of branching-selection particle systems~\cite{BeGo:10,BeMa:14,CoMa:17,ScWe:23}. Our results fit into this literature and leave some open questions, which are subject to future work.

\medskip

{\bf Rate of adaptation.} Our Theorems~\ref{thm:main},~\ref{thm:transition},~\ref{thm:speed} show that under some general conditions on the tails of the reproduction law, the noisy $\lfloor N^{\gamma}\rfloor$-BRW admits a limiting log-profile, which changes according to a deterministic dynamical system. For $\rho=1$ (see~\eqref{eq:defh}), we show local convergence to a unique traveling wave solution of this system, where the shape and speed of the wave are explicitly given. It is then natural to ask what we can say about the speed (rate of adaptation) of the original stochastic particle system. Does it converge to the same speed as the traveling wave solution? A first step towards such a result would be to show that for fixed values of $N$, an asymptotic speed $\nu_N\in\R$ exists, satisfying
\[
\lim_{t\to\infty}  \frac{\max_{i=1,\dots,\lfloor N^\gamma \rfloor}Z_{N,i}^t}{t} = \lim_{t\to\infty}  \frac{\min_{i=1,\dots,\lfloor N^\gamma \rfloor}Z_{N,i}^t}{t} = \nu_N \quad \text{a.s.}
\]
Assuming such an asymptotic speed exists one may investigate its behavior, as $N\to\infty$. Our results (in the case $\rho=1$) suggest that we should see 
\[
\frac{\nu_N}{\log N} \to \nu
\]
as $N\to\infty$, where $\nu$ is defined in Theorem~\ref{thm:transition}. 

\medskip

{\bf Genealogical structure.} Brunet, Derrida, Mueller and Munier~\cite{BDMM:07} predicted that the Bolthausen-Sznitman coalescent~\cite{BoSz:98,Pitm:99} is a universal limiting genealogy for branching-selection particle systems. We conjecture that the noisy $\lfloor N^\gamma \rfloor$-BRW only belongs to this universality class in the selection of the fittest regime; and we see a different coalescent in the limit in the selection of the luckiest regime, when noise has the more significant role. This conjecture is in line with the results of~\cite{ScWe:23}.

In~\cite{ScWe:23}, a variation of the  noisy $\lfloor N^\gamma \rfloor$-BRW model was considered, which was introduced by Cortines and Mallein~\cite{CoMa:17} (who proved convergence to the Bolthausen-Sznitman coalescent for $\beta>1$) as a generalization of the exponential model of Brunet and Derrida~\cite{BrDe:97, BDMM:07}.
It is assumed in~\cite{ScWe:23} that at each reproduction step, every individual produces an infinite number of offspring according to an exponential Poisson point process centered around the parental value. Then, $\lfloor N^\gamma \rfloor$ individuals are selected using a sampling scheme interpolating between truncation selection and Gibbs sampling.

The critical assumption of this model is the choice of the exponential reproduction law. In particular, one crucial observation (already made by Brunet and Derrida) is that, up to translation, the system reaches stationarity after a {\it single} step. (Compared to Theorem~\ref{thm:speed}, the system reaches its travelling wave regime at infinite rate instead of geometric rate.) This makes the model fully integrable and allows 
to make precise predictions on the large-population limit and in particular, on the genealogical structure of the model.
In~\cite{ScWe:23}, the existence of a critical $\gamma_{c}(\beta)\in[0,1]$ was proved, segregating between two different limiting regimes.
\begin{itemize}
\item If $\gamma<\gamma_{c}(\beta)$, then the genealogy of the population is given by the Poisson-Dirichlet coalescent with parameter $(0,\beta)$ \cite{PiYo:97,Schw:03}. 
\item If $\gamma>\gamma_{c}(\beta)$, then, after rescaling time by $c\log(N)$ (for some explicit choice $c\equiv c(\beta,\gamma)$), the genealogy of the population is given by the Bolthausen-Sznitman coalescent. 
\end{itemize}
We conjecture that the same transition occurs in the noisy $\lfloor N^\gamma \rfloor$-BRW. However, we emphasize that the proof in \cite{ScWe:23} heavily relies on the integrability of the exponential model 
and more specifically, on the property that the system reaches stationarity after only a single step. Thus, the geometric rate of convergence in Theorem \ref{thm:speed} appears to be a first important step in the direction 
of proving the universality of the phase transition observed in the model.

\subsection{Outline of the paper}

This work proceeds as follows: Section \ref{sec:SingleGen} is dedicated to proving Theorem \ref{thm:main} and the more general Theorem \ref{thm:mainGeneral}. The proof relies on investigating the limiting log-profile after reproduction in Section~\ref{sec:Reproduction} and after selection in Section~\ref{sec:Sampling}.
We examine an example of where a limiting log-profile after selection fails to materialize in Section~\ref{sec:Examples}. 

In Sections~\ref{sect:pwllogprof} and~\ref{sec:tws} we prove that the deterministic system~\eqref{eq:dynamics} admits a unique traveling wave solution, which is given by a piecewise linear function, which exhibits a phase transition as we change the selection pressure $\gamma$. In Section~\ref{sect:pwllogprof} we introduce the space $\mathcal T$ of concave piecewise linear functions, where the traveling wave solution will belong. In Section~\ref{sec:Toperators} we describe how the operators defined in~\eqref{def:r} and~\eqref{def:s} act on such piecewise linear functions. In Section~\ref{sec:pwl} we prove that the space $\mathcal T$ is strongly attractive with respect to~\eqref{eq:dynamics}; starting from any $g^0:\R \to \R_+\cup\{-\infty\}$ with bounded support such that $\sup_{x}g^0(x) = \gamma$, after finitely many iterations of~\eqref{eq:dynamics}, every $g^t$ is a concave piecewise linear function. Using the results of Section~\ref{sect:pwllogprof} we manage to translate~\eqref{eq:dynamics} to a finite-dimensional system describing the evolution of the ``break-points'' where the slopes of the linear segments change. Analyzing this finite-dimensional problem, we prove Theorems~\ref{thm:transition} and~\ref{thm:speed} in Sections~\ref{sec:proofthmTransition} and~\ref{sec:proofthmSpeed}.

\subsection{Some notation}\label{sec:mathIntro}

Throughout the rest of this work, for ease of reading, we will assume that $N^\gamma$ and $N^{1-\gamma}$ are integers. Specifically, we will write $N^\gamma$ and $N^{1-\gamma}$ for $\lfloor{N^\gamma}\rfloor$ and $N/\lfloor{N^{\gamma}}\rfloor$ respectively. For an integer $K$, we use the notation $[K] = \{1, \ldots, K\}$, and so we write $[N^\gamma] = \{1, \ldots, \lfloor{N^\gamma}\rfloor\}$ and $[N^{1-\gamma}] = \{1, \ldots, N/\lfloor{N^{\gamma}}\rfloor\}$. 

\medskip

We will write 
$\bar \R=\{-\infty\}\cup \{+ \infty\} \cup \R$.

For a function $f: \R \to \R \cup \{-\infty\}$, we will say that $f$ has a left limit at $x$ if and only if $\lim_{z \to x^-}f(z)$ exists and is real, or if there exists $\delta > 0$ such that $f(z) = -\infty$ for all $z \in (x-\delta, x)$, in which case we say that $\lim_{z \to x^-} f(x) = -\infty$ exists. Then $f$ is left continuous at $x$ if and only if $\lim_{z \to x^-}f(z) = f(x)$. Similarly we define right limits and right continuity, and say that $f$ is continuous at $x$ if and only if $f$ is both left and right continuous at $x$, and discontinuous otherwise. We will denote the set of discontinuity points of $f$ by $D_f$. Recall that any function $f$ which admits a left and right limit at every $x \in \R$ has at most countably many discontinuities.

In a similar vein as c\`adl\`ag functions, we say that a function $f$ is {\it c\`all\`al} (continue \`a l'un, limite \`a l'autre) if and only if for all $x \in \R$, $f$ is continuous on one side of $x$ and has a limit on the other. 

We extend the definition of concavity to functions $f\in{\cal D}$. Notice that such a function that is concave on a compact support $\mbox{Supp}(f) := \{x : f(x) \neq -\infty\}$ is also concave on the real line; if $x$ or $y$ is taken outside of $\mbox{Supp}(f)$, then for all $\alpha \in [0,1]$, 
\[ f(\alpha x + (1-\alpha) y ) \geq \alpha f(x) + (1-\alpha)f(y) = -\infty.\]

For any point measure $M_N$,
we write
$$
\forall a<b, \ \ \ M_N(a,b) = M_N((a,b]).
$$

For sequences of random variables $(X_N)_{N\in\N}$ and $(Y_N)_{N\in\N}$ we write $X_N\lesssimP Y_N\in\R$ as $N\to\infty$, if and only if, for all $\eps>0$,
\[
\p{X_N-Y_N>\eps} \to 0,
\]
as $N\to\infty$. We often use the notation with $X_N$ or $Y_N$ being simply a deterministic constant.
In particular, if $X_N\lesssimP c\in\R$ and $X_N\gtrsimP c$ as $N\to\infty$, then $X_N\xrightarrow{\mathbb P} c$ as $N\to\infty$.

We will also use the following property. 
Suppose that $X_N\lesssimP Y_N$ as $N\to\infty$, and that \[Y_N\xrightarrow{\mathbb P} Y^*\]
as $N\to\infty$ for some random (or deterministic) variable $Y^*$. Then we have
\[
X_N \lesssimP Y^*,
\]
as $N\to\infty$.  If we assume $X_N\gtrsimP Y_N$ instead, then it follows that $X_N \gtrsimP Y^*$.

\section{Dynamics of the model as operators on limiting log-profiles}\label{sec:SingleGen}

This section is divided as follows. We relegate our technical lemmas to Section \ref{sec:technical}. In Section \ref{sec:Reproduction}, we prove that a deterministic operator $r$ describes the change of the limiting log-profile for point measures after reproduction, while a similar result for the selection step under technical conditions is proven in Section \ref{sec:Sampling}. The conditions are needed since though we get a quite detailed perspective of the process when looking at a logarithmic scale, there are some exceptional cases where the scale does not offer enough precision to describe a limiting log-profile after sampling. These minor conditions, however, are always satisfied for limiting log-profiles in $\mathcal{C}$ and when $h$ is concave. We then provide a proof of Theorem \ref{thm:main} in Section \ref{sec:ProofMain}.

\subsection{Technical Lemmas}\label{sec:technical}

\subsubsection{A Laplace's principle type Lemma}

We prove a version of Laplace's principle for the type of Riemann-Stieltjes integrals that we encounter throughout this work. Recall the definition of the space of functions $\mathcal{D}$ (Definition~\ref{def: D}), and recall that a function $f$ is c\`all\`al if and only if for every $x \in \R$, the function $f$ is continuous on one side and has a limit on the other. Before stating the main result of this section, we will need to prove an easy lemma.

\begin{lem}\label{lem:M_Nto0}
Assume $(M_N)_{N \in \N}$ has limiting log-profile $g \in \mathcal{D}$ with lower and upper edges $L,U$. For any $a,b \in {\bar R}\setminus D_g$ with $a<b$, if $\sup_{x \in (a,b]} g(x) = -\infty$, then as $N \to \infty$,
\[ M_N((a,b]) \xrightarrow{\P} 0.\]
Furthermore, for the leftmost point $\rho_{N,L}$ and rightmost point $\rho_{N,U}$ of $M_N$, as $N \to \infty$,
\[ \rho_{N,L} \xrightarrow{\P} L, \quad \text{and} \quad \rho_{N,U} \xrightarrow{\P} U.\]
\end{lem}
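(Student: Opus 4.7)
The two claims of the lemma rest on the same principle: the convergence $\log M_N((a,b])/\log N \xrightarrow{\P} \sup_{(a,b]} g$ from Definition~\ref{def: limitlog} combined with the integer-valuedness of $M_N$. The plan is to first establish the zero-count claim for intervals with finite endpoints, then deduce the edge convergence for $\rho_{N,U}$ and $\rho_{N,L}$, and finally observe that the zero-count claim for unbounded intervals ($(a,+\infty)$ with $a>U$, or $(-\infty,b]$ with $b<L$) is essentially equivalent to the edge convergence.

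\textbf{First statement, finite endpoints.} Fix $a, b \in \R \setminus D_g$ with $a < b$ and $\sup_{(a,b]} g = -\infty$. Definition~\ref{def: limitlog} gives $\log M_N((a,b])/\log N \xrightarrow{\P} -\infty$, so for every $\eta > 0$, $\p{M_N((a,b]) \geq N^{-\eta}} \to 0$. For $N$ large enough that $N^{-\eta} < 1$, the integer-valuedness of $M_N((a,b])$ forces the event $\{M_N((a,b]) \geq N^{-\eta}\}$ to coincide with $\{M_N((a,b]) \geq 1\}$, so $M_N((a,b]) \xrightarrow{\P} 0$.

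\textbf{Edge convergence.} I treat $\rho_{N,U} \xrightarrow{\P} U$; the argument for $\rho_{N,L}$ is symmetric. Fix $\epsilon > 0$. For the lower bound, since $U = \sup \mbox{Supp}(g)$, there exists $x^* \in \mbox{Supp}(g) \cap (U - \epsilon, U]$, and I choose $a', b' \in (U - \epsilon, U + \epsilon) \setminus D_g$ with $a' < x^* \leq b'$ (using that $D_g$ is countable). Then $\sup_{(a',b']} g \geq g(x^*) \geq 0$ since $g$ takes values in $\R_+ \cup \{-\infty\}$, and Definition~\ref{def: limitlog} yields $M_N((a',b']) \geq 1$ with high probability, placing a particle in $(a',b'] \subset (U - \epsilon, U + \epsilon)$ and giving $\rho_{N,U} > U - \epsilon$ w.h.p. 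For the upper bound, pick $a \in (U, U + \epsilon) \setminus D_g$; then $\{\rho_{N,U} \leq a\} = \{M_N((a,+\infty)) = 0\}$ and $a < U + \epsilon$, so it suffices to show $M_N((a,+\infty)) \xrightarrow{\P} 0$, i.e., the first statement for the semi-infinite interval.

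\textbf{Main obstacle: the infinite-endpoint case.} For $a > U$ with $a \notin D_g$, decompose $(a,+\infty) = (a, B] \cup (B, +\infty)$ for $B > a$, $B \notin D_g$. The finite case gives $M_N((a,B]) \xrightarrow{\P} 0$ for each fixed $B$, but the tail $M_N((B,+\infty))$ cannot be controlled from the limiting log-profile alone, since Definition~\ref{def: limitlog} only speaks of bounded intervals. An external tightness input is therefore required: one needs a (possibly $N$-dependent) threshold $B_N \to \infty$ such that $\p{M_N((B_N,+\infty)) \geq 1} \to 0$, equivalently $\rho_{N,U} \lesssimP B_N$. In the applications of Theorem~\ref{thm:main} such a bound can be extracted from the tail condition~\eqref{eq:Karamata} on $X$ together with the boundedness of $\mbox{Supp}(g^0)$; indeed, after a single reproduction step the particle locations cannot exceed a deterministic power of $c_N$ times $\log N$ except with vanishing probability. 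Granted such a $B_N$, combining the finite case on $(a,B_N]$ with the tail bound on $(B_N,+\infty)$ yields $M_N((a,+\infty)) \xrightarrow{\P} 0$, simultaneously closing the upper edge argument for $\rho_{N,U}$ and establishing the $\bar{\R}$ version of the first statement.
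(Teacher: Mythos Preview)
Your finite-endpoint argument for the first claim and your lower-bound argument for $\rho_{N,U}$ are correct and match the paper's approach (the paper phrases the lower bound over $(U-\delta', \infty)$, but that interval can be truncated to a bounded one without loss, exactly as you do). Where you diverge is in the upper bound for $\rho_{N,U}$ and the $b=+\infty$ case of the first claim, and here you have spotted a genuine issue that the paper's proof glosses over.

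The paper simply writes $\P(\rho_{N,U} > U+\delta) = \P(M_N((U+\delta, \infty)) > 0) \to 0$, invoking the first statement with $b = +\infty$. But Definition~\ref{def: limitlog} only asserts the convergence \eqref{p: MN} for \emph{finite} $a, b \in \R \setminus D_g$, so this step is not justified by the definition alone. Your instinct is correct and sharpens to a counterexample: $M_N = \delta_0 + \delta_N$ has limiting log-profile $g$ with $\mbox{Supp}(g)=\{0\}$ (hence $U = 0$), yet $\rho_{N,U} = N \to \infty$ and $M_N((1,\infty))=1\not\to 0$. Thus neither the edge convergence nor the first claim for $a,b\in\bar \R$ follows from Definition~\ref{def: limitlog} without further input.

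In the paper's applications the measures are always $M_N^t$, $M_N^{R,t}$, $M_N^{S,t}$ arising from the model, for which the tail assumption \eqref{eq:Karamata} together with the bounded support of $g^0$ supplies precisely the tightness you propose. Your suggestion to import such a bound is the right way to close the gap; the paper does not make this explicit and appears to tacitly assume either tightness of the supports or that \eqref{p: MN} is intended to hold for $a, b \in \bar \R$.
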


\begin{proof}
Since $M_N((a,b])$ takes integer values, 
\[ \P(M_N((a,b]) > 0) = \P\left(\frac{\log M_N((a,b])}{\log N} \geq 0\right).\]
Thus if $\log M_N((a,b])/\log N \xrightarrow{\P} -\infty$, then $M_N((a,b]) \xrightarrow{\P} 0$. 

Now we examine $\rho_{N,U}$. Using the above result and definition of $\mathcal{D}$, for any $\delta > 0$, 
\begin{equation}\label{eq:rhoUupper}
 \P\left(\rho_{N,U} > U+\delta\right) = \P(M_N((U+\delta, \infty)) > 0) \xrightarrow{N \to \infty} 0.
 \end{equation}
Since $g$ has countably many discontinuity points, we may always find $0 < \delta' \leq \delta$  such that $g$ is continuous at $U - \delta'$, and so 
\[\frac{\log M_N((U-\delta', \infty))}{\log N} \xrightarrow{\P} \sup_{x \in (U-\delta', \infty)}g(x) > 0\]
and as such,
\begin{equation}\label{eq:rhoUlower}
 \P( \rho_{N,U} < U - \delta) \leq \P(M_N((U-\delta', \infty)) = 0) \xrightarrow{N \to \infty} 0.
 \end{equation}
Then $\rho_{N,U} \xrightarrow{\P} U$ follows from \eqref{eq:rhoUupper} and \eqref{eq:rhoUlower}. A symmetric argument holds for $\rho_{N,L}$. 
\end{proof}

\begin{lem}\label{lem:integral}
Let $f_N: \R \to \R\cup\{-\infty\}$ be a sequence of functions and let $f:\R \to \R\cup\{-\infty\}$ be a c\`all\`al function with discontinuity set $D_f$ such that for all $x \in \R$,
\begin{align}
\lim_{\delta \to 0}\lim_{N \to \infty}\sup_{z \in B(x,\delta)}f_N(z) &=\lim_{\delta \to 0} \sup_{z \in B(x, \delta)}f(z), \label{eq:IntLemmaSupCond}\\
\lim_{\delta \to 0}\lim_{N \to \infty}\inf_{z \in B(x, \delta)}f_N(z) &=\lim_{\delta \to 0} \inf_{z \in B(x, \delta)}f(z).\label{eq:IntLemmaInfCond}
\end{align}
Let $(M_N)_{N \in \N}$ be a sequence of (random) point measures on $\R$ which has a limiting log-profile $g \in \mathcal{D}$ with discontinuity set $D_g$. If $D_f \cap D_g = \emptyset$, then for all $a,b \in \overline{\R} ~\setminus~(D_f~\cup~D_g)$, as $N \to \infty$, 
\[ \frac{\log\left(\int_{(a,b]}N^{f_N(x)} dM_N(x)\right)}{\log N} \xrightarrow{\P} \sup_{x \in (a,b]}\left(f(x) + g(x)\right)\in \R \cup \{-\infty\}.\]
\end{lem}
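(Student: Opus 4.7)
The plan is to prove the two halves of convergence in probability separately. Write $S := \sup_{x \in (a,b]}(f(x) + g(x)) \in \R \cup \{-\infty\}$ and show the logarithmic integral is simultaneously $\lesssimP S$ and $\gtrsimP S$. Since $g \in \mathcal{D}$ has bounded support, Lemma~\ref{lem:M_Nto0} concentrates $M_N$ on an arbitrarily small neighborhood of $\mbox{Supp}(g)$ with probability tending to $1$, which reduces the problem to the case where $a, b$ are finite (after replacing $-\infty$ or $+\infty$ by a continuity point of $f,g$ just outside $[L(g), U(g)]$).

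For the upper bound, fix $\eps > 0$ and introduce $\bar f(x) := f(x^-) \vee f(x) \vee f(x^+)$, $\bar g(x) := g(x^-) \vee g(x) \vee g(x^+)$. For each $x \in [a,b]$, use condition~\eqref{eq:IntLemmaSupCond} to choose $\delta_x > 0$ such that for $N$ large $\sup_{B(x,\delta_x)} f_N \leq \bar f(x) + \eps$ and also $\sup_{B(x,\delta_x)} g \leq \bar g(x) + \eps$. Extract a finite subcover by compactness, and refine it into a partition $a = a_0 < a_1 < \cdots < a_n = b$ with each $a_i \notin D_f \cup D_g$ (possible by countability of $D_f \cup D_g$) and each $(a_i, a_{i+1}]$ contained in some $B(x_{k(i)}, \delta_{x_{k(i)}})$. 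The trivial bound
$$\int_{(a,b]} N^{f_N} dM_N \leq \sum_{i} N^{\sup_{(a_i, a_{i+1}]} f_N} M_N((a_i, a_{i+1}])$$
combined with the limiting log-profile (and with Lemma~\ref{lem:M_Nto0} when the local sup of $g$ is $-\infty$) yields, after absorbing $\log n / \log N$ into $O(\eps)$,
$$\frac{\log \int_{(a,b]} N^{f_N} dM_N}{\log N} \lesssimP \max_i \bigl(\bar f(x_{k(i)}) + \bar g(x_{k(i)})\bigr) + O(\eps).$$
The hypothesis $D_f \cap D_g = \emptyset$ enters here: at each $x_{k(i)}$ at least one of $f,g$ is continuous, so there are points $y$ arbitrarily close to $x_{k(i)}$ that simultaneously realise the one-sided values giving $\bar f(x_{k(i)})$ and $\bar g(x_{k(i)})$. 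Hence $\bar f(x_{k(i)}) + \bar g(x_{k(i)}) \leq \sup_{B(x_{k(i)}, \delta_{x_{k(i)}})} (f+g) + \eps \leq S + O(\eps)$, where the last inequality uses that $a,b \notin D_f \cup D_g$ so enlarging $(a,b]$ by $\delta$ at the endpoints costs at most $O(\eps)$.

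For the lower bound, the case $S = -\infty$ is trivial. Otherwise, by countability of $D_f \cup D_g$ pick $x^* \in (a,b] \setminus (D_f \cup D_g)$ with $(f+g)(x^*) \geq S - \eps$; then $f$ and $g$ are both continuous at $x^*$. Choose $a^* < x^* < b^*$ with $a^*, b^* \in (a,b] \setminus (D_f \cup D_g)$ small enough that $\inf_{(a^*, b^*]} f \geq f(x^*) - \eps$, $\sup_{(a^*, b^*]} g \geq g(x^*) - \eps$, and, by condition~\eqref{eq:IntLemmaInfCond}, $\inf_{(a^*, b^*]} f_N \gtrsimP f(x^*) - \eps$. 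The bound
$$\int_{(a,b]} N^{f_N} dM_N \;\geq\; N^{\inf_{(a^*, b^*]} f_N} \, M_N((a^*, b^*])$$
together with the limiting log-profile then gives the logarithm divided by $\log N$ is $\gtrsimP f(x^*) + g(x^*) - 2\eps \geq S - 3\eps$, and $\eps \to 0$ closes the argument.

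The main obstacle is the inequality $\bar f(x) + \bar g(x) \leq \sup_{B(x,\delta)}(f+g) + O(\eps)$ used in the upper bound: it would fail if $f$ and $g$ both jumped at $x$ but on opposite sides, since then the maximal one-sided values of $f$ and $g$ would not be achievable near the same point. This is precisely what the assumption $D_f \cap D_g = \emptyset$ rules out. A secondary technicality is that, in order to apply Definition~\ref{def: limitlog} to each piece of the partition, the $a_i$ must avoid $D_g$, which is possible by countability; the same countability is used to locate $x^*$ outside $D_f \cup D_g$ in the lower bound.
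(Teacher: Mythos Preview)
Your upper bound argument is correct and follows the same covering-and-partitioning route as the paper. The gap is in the lower bound: the claim that ``by countability of $D_f \cup D_g$'' one can pick $x^* \in (a,b] \setminus (D_f \cup D_g)$ with $(f+g)(x^*) \geq S - \eps$ is not justified. Countability of $D_f \cup D_g$ only tells you that its complement is dense, not that the supremum of $f+g$ is approximated there. For a concrete counterexample take $f \equiv 0$ and $g \in \mathcal{D}$ defined by $g(1/2) = 1$, $g(x) = 0$ for $x \in [0,1]\setminus\{1/2\}$, and $g = -\infty$ elsewhere. Then $D_f = \emptyset$, $D_g = \{0,1/2,1\}$, and on $(0,1]$ the supremum $S = 1$ is achieved only at $1/2 \in D_g$; every point of $(0,1] \setminus D_g$ gives $(f+g) = 0$. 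Note that the lemma's conclusion is still true in this example, since the limiting log-profile definition forces $\log M_N((0,1])/\log N \to 1$; it is only your proof that breaks.

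The fix is simple: require only $x^* \notin D_f$. Such an $x^*$ with $(f+g)(x^*) \geq S - \eps$ always exists: take any $x_0 \in (a,b]$ with $(f+g)(x_0) \geq S - \eps/2$; if $x_0 \in D_f$ then $x_0 \notin D_g$, so $g$ is continuous at $x_0$ while $f$ is one-sidedly continuous there (c\`all\`al), and approaching from that side inside $(a,b]$ (possible since $a,b \notin D_f$) while avoiding the countable set $D_f$ produces the desired $x^*$. With $x^* \notin D_f$ your argument goes through unchanged: continuity of $f$ at $x^*$ and \eqref{eq:IntLemmaInfCond} control $\inf_{(a^*,b^*]} f_N$, the bound $\sup_{(a^*,b^*]} g \geq g(x^*)$ is trivial (no continuity of $g$ at $x^*$ is needed), and $a^*,b^* \notin D_g$ can be arranged by countability.

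The paper handles this asymmetry between $f$ (c\`all\`al) and $g$ (only one-sided limits) differently: it proves both bounds locally on each ball of the cover, classifying each ball as ``type (a)'' ($f$ nearly constant on the whole ball) or ``type (b)'' ($g$ nearly constant on the whole ball) via $D_f \cap D_g = \emptyset$, and uses a sub-interval argument on type (b) balls. Your global single-point lower bound is cleaner once the existence of $x^*$ is established as above.
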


\begin{rmk}\label{rmk:unifIntLemma}
The conditions \eqref{eq:IntLemmaSupCond} and \eqref{eq:IntLemmaInfCond} hold trivially if $f_N$ converges locally uniformly to $f$, while these conditions imply pointwise convergence at all continuity points of $f$. 
\end{rmk}

\begin{proof}
Note throughout that \eqref{p: MN} can be rewritten as 
\begin{equation}\label{p: MNint}
 \frac{\log\left(\int_{(a,b]} dM_N(x)\right)}{\log N} \xrightarrow{\P} \sup_{x \in (a,b]} g(x)
 \end{equation}
for $a,b \not\in D_g$. \\

We will first assume that all $f_N$ and $f$ are real-valued functions.
Let $L$ and $U$ be the lower and upper edges of $g$. First suppose that $a = -\infty$ and $b < L$. From Lemma~\ref{lem:M_Nto0}, $M_N((-\infty, b]) \xrightarrow{\P} 0$, and so in this case, 
\[ \P\left(\int_{(-\infty,b]} N^{f_N(x)}dM_N(x) > 0 \right) \leq \P\left(M_N(-\infty, b) > 0\right) \to 0\]
as $N \to \infty$, and so 
\[ \frac{\log\left( \int_{(-\infty, b]}N^{f_N(x)}dM_N(x)\right)}{\log N} \xrightarrow{\P} \sup_{x \in (-\infty, b]}g(x) = -\infty.\]
 A similar argument holds if $a > U$ and $b = \infty$, completing the proof in these cases.\\

Now suppose $a,b \in \R\setminus(D_f \cup D_g)$ and fix $\eps > 0$. From \eqref{eq:IntLemmaSupCond} and \eqref{eq:IntLemmaInfCond}, for all $x \in \R$, there exists a $\delta_x' > 0$ such that for all $\delta_x\leq \delta_x'$,
\begin{equation}\label{eq:SupInfCloseEnough}
\sup_{z \in B(x, \delta_x)}f_N(z) \leq \sup_{z \in B(x, \delta_x)}f(z) + \eps \qquad \text{and} \qquad \inf_{z \in B(x, \delta_x)}f_N(z) \geq \inf_{z \in B(x, \delta_x)}f(z) - \eps
\end{equation}
holds for all $N$ large enough. Since $f$ is c\`all\`al, $g$ has left and right limits for every $x \in \R$, and $D_f \cap D_g = \emptyset$, then for every $x \in [a,b]$, there exists $0 < \delta_x \leq \delta_x'$ such that for the open neighbourhood $B(x, \delta_x)$, one of the two holds:
\begin{itemize}
\item[(a)] for all $y,z \in B(x, \delta_x)$, $|f(y)-f(z)| < \eps$ and if $(y-x)(z-x)>0$, then $|g(y) - g(z)| < \eps$, or 
\item[(b):] for all $y,z \in B(x, \delta_x)$, $|g(y)-g(z)| < \eps$ and if $(y-x)(z-x) > 0$, then $|f(y) - f(z)| < \eps$.
\end{itemize}
The condition $(y-x)(z-x) > 0$ ensures $y$ and $z$ are on the same side of $x$. Since $g$ and $f$ have left and right limits for all $x \in \R$, $D_g \cup D_f$ is countable, and so we may also specify that $\delta_x$ is chosen in such a way that in addition to the above, both $f$ and $g$ are continuous at $x - \delta_x$ and $x + \delta_x$. 
Since $[a,b]$ is compact and $\{B(x, \delta_x)\}_{x \in[a,b]}$ is an open cover of $[a,b]$, there is a finite subcover $\{ B(x_i, \delta_{x_i})\}_{i=1}^K$ of $[a,b]$. Let 
\[ B_i := (x_i - \delta_{x_i}, x_i + \delta_{x_i}] \cap (a,b],\]
 so that $\bigcup B_i = (a,b]$. We show that for all $i = 1, \ldots, K$, 
\begin{equation}\label{eq:IntBoundEps}
\sup_{x \in B_i}(f(x) + g(x)) -5\eps \lesssimP \frac{\log\left(\int_{B_i}N^{f_N(x)}dM_N(x)\right)}{\log N} \lesssimP \sup_{x \in B_i}( f(x) + g(x)) + 3\eps.
\end{equation}
Let $s_i^f := \sup_{x \in B_i}f(x)$ and $s_i^g := \sup_{x \in B_i}g(x)$. By the definition of $\delta_{x_i}$ and the continuity of $f$ and $g$ at $b \wedge (x_i + \delta_{x_i, \eps})$, we see that 
\begin{equation}\label{eq:SupCont}
 \left\lvert \sup_{x \in B_i}(f(x) + g(x)) - \left(s_i^f + s_i^g\right)\right\rvert \leq 2\eps.
 \end{equation}
 For the upper bound of \eqref{eq:IntBoundEps}, using \eqref{eq:SupInfCloseEnough}, since $a\vee (x_i - \delta_{x_i})$ and $b \wedge (x_i + \delta_{x_i})$ are continuity points of $g$, \eqref{p: MNint} guarantees that for all $N$ large enough
\[ \frac{\log\left(\int_{B_i}N^{f_N(x)}dM_N(x)\right)}{\log N}  \leq \frac{\log\left(N^{s^f_i + \eps}\int_{B_i}dM_N(x)\right)}{\log N} 
\xrightarrow{\P} s^f_i + s^g_i + \eps,\]
which along with \eqref{eq:SupCont} proves the upper bound of \eqref{eq:IntBoundEps}.

 For the lower bound, we consider the two cases (a) and (b) for $B(x_i, \delta_{x_i})$. Note that if $a=x_i$ or $b=x_i$, since $f$ is continuous at $a$ and $b$, we may assume $B(x_i, \delta_{x_i})$ is of type (a) in this case. 
 
 If $B(x_i, \delta_{x_i})$ is of type (a), then $\inf_{x \in B(x_i, \delta_{x_i})}f(x) \geq s^f_i - \eps,$
 which along with \eqref{eq:SupInfCloseEnough} guarantees that for all $N$ large enough, $f_N(x) \geq s^f_i - 2\eps$ for all $x \in B(x_i, \delta_{x_i})$. By using \eqref{p: MNint} again, 
\[ \frac{\log\left(\int_{B_i}N^{f_N(x)}dM_N(x)\right)}{\log N}  \geq \frac{\log\left(N^{s^f_i - 2\eps}\int_{B_i}dM_N(x)\right)}{\log N} 
\xrightarrow{\P} s^f_i -2\eps + s^g_i,\]
which along with \eqref{eq:SupCont} completes the lower bound of \eqref{eq:IntBoundEps} in this case. 

Now suppose $B(x_i, \delta_{x_i})$ is of type (b). We define a new value $y_i$ in the following way; if $f(x) \geq s^f_i - \eps$ for all $x < x_i$ in $B_i$, let $y_i = x_i - \delta_{x_i}/2$, while if $f(x) \geq s^f_i - \eps$ for all $x > x_i$ in $B_i$, let $y_i = x_i + \delta_{x_i}/2$ (at least one of these two cases must hold from the definition of $B(x_i, \delta_{x_i})$ and since $f$ is either right continuous or left continuous at $x_i$). Repeating similar arguments as above, from \eqref{eq:IntLemmaInfCond}, we may find $\delta_{y_i} \leq \delta_{x_i}/2$ such that $y_i - \delta_{y_i}, y_i + \delta_{y_i} \not\in D_g$ and for all $N$ large enough, $f_N(x) \geq s^f_i - 2\eps$ for all $x \in B(y_i, \delta_{y_i})$. 
Since $B(x_i, \delta_{x_i})$ is of type (b), 
\begin{equation}\label{eq:weirdgsup}
\sup_{x \in B(y_i, \delta_{y_i})} g(x) \geq s^g_i - \eps,
 \end{equation}
and so from \eqref{p: MNint} once more, 
\[ \frac{\log\left(\int_{B_i}N^{f_N(x)}dM_N(x)\right)}{\log N} \geq \frac{\log\left( N^{s^f_i - 2\eps}\int_{B(y_i, \delta_{y_i})}dM_N(x)  \right)}{\log N} \xrightarrow{\P} s^f_i - 2\eps +\! \sup_{x \in B(y_i, \delta_{y_i})} \!g(x).\]
Substituting \eqref{eq:weirdgsup} and \eqref{eq:SupCont} into the above equation completes the lower bound of \eqref{eq:IntBoundEps} in this case as well. 

Suppose $j$ is such that 
\[\sup_{x \in B_j}(f(x) + g(x)) = \max_{i=1}^K \left\{\sup_{x \in B_i}(f(x) + g(x))\right\}.\]
 With \eqref{eq:IntBoundEps} in hand and since there are only finitely many $B_i$, we see that 
\begin{align*}
\frac{\log\left(\int_{(a,b]}N^{f_N(x)}dM_N(x)\right)}{\log N} &\leq \frac{\log\left( \sum_{i=1}^K \int_{B_i}N^{f_N(x)}dM_N(x)\right)}{\log N} \\
&\leq \frac{\log(K) + \log\left(\max_{i=1}^K \int_{B_i} N^{f_N(x)}dM_N(x)\right)}{\log N} \\
&\lesssimP \sup_{x \in B_j}(f(x) + g(x)) + 3\eps,
\end{align*}
while 
\[\frac{\log\left(\int_{(a,b]}N^{f_N(x)}dM_N(x)\right)}{\log N} \geq \frac{\log \left(\int_{B_j} N^{f_N(x)}dM_N(x)\right)}{\log N} \gtrsimP \sup_{x \in B_j}(f(x) + g(x)) - 5\eps.\]
Since $\bigcup B_i = (a,b]$, we see that 
\[\sup_{x \in (a,b]}(f(x) + g(x)) = \sup_{x \in B_j}(f(x) + g(x)),\]
 and so letting $\eps \to 0$ proves the statement of the lemma in this case. \\

Now consider any $a,b \in \overline{\R} \setminus(D_f \cup D_g)$ such that $a < b$. Let $x_L < L$ and $x_U > U$ be continuity points of $f$ and $g$. Then splitting the integrals 
\begin{multline*}
\int_{(a,b]}N^{f(x)}dM_N(x) = \int_{(a\wedge x_L, x_L]} N^{f(x)}dM_N(x) \\
 + \int_{(a\vee x_L, b\wedge x_U]}N^{f(x)}dM_N(x)+ \int_{(x_U, b\vee x_U]}N^{f(x)}dM_N(x),
\end{multline*}
and applying the previous cases proves the lemma for real-valued functions. \\

Now we allow $f_N:\R \to \R\cup\{-\infty\}$ and $f: \R\to \R\cup\{-\infty\}$. The statement of the lemma holds by truncating the functions and applying the argument above. For all $M \in \R$, let $f_N^M(x) = -M \vee f_N(x)$ and $f^M(x) = -M\vee f(x)$. It is evident that the conditions \eqref{eq:IntLemmaSupCond} and \eqref{eq:IntLemmaInfCond} hold for the truncated functions as well, and so in particular 
\begin{equation}\label{eq:IntLemM} \frac{\log\left(\int_{(a,b]}N^{f^M_N(x)} dM_N(x)\right)}{\log N} \xrightarrow{\P} \sup_{x \in (a,b]}(f^M(x) + g(x)) = \sup_{x \in (a,b]}(-M\vee f(x) + g(x)).
\end{equation}
Since $g(x)$ is bounded above, if $\sup_{x \in (a,b]}(f(x) + g(x)) = -\infty$, then \eqref{eq:IntLemM} tends to $-\infty$ as $M$ increases, while if $\sup_{x \in (a,b]}(f(x) + g(x)) \in \R$, then \eqref{eq:IntLemM} is precisely equal to this value for all $M$ large enough. In either case, letting $M \to \infty$ completes the proof.
\end{proof}

\subsubsection{Concentration inequalities}

We adapt classic concentration inequalities to be applied for our results. We will use the following version of McDiarmid's inequality.

\begin{lem}[McDiarmid, \cite{MCDI:98}]\label{thm: McD}
Let $X_1,\dots,X_n$ be independent random variables with ${X_k\in[0,1]}$ for all $k\in[n]$. Let $S_n:=\sum_{k=1}^{n}X_k$ and let $\mu=\E{S_n}$. Then for any $\epsilon\in(0,1)$,
\[
\p{|S_n-\mu| > \epsilon\mu} < e^{-\frac{1}{4}\epsilon^2\mu}.
\]
\end{lem}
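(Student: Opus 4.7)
The plan is to use the classical exponential-moment (Chernoff-style) method, treating the upper and lower tails separately. For the upper tail, I would apply Markov's inequality to $e^{\lambda S_n}$ for $\lambda > 0$, giving
\[
\p{S_n - \mu > \epsilon \mu} \leq e^{-\lambda(1+\epsilon)\mu}\,\E{e^{\lambda S_n}}.
\]
By independence, $\E{e^{\lambda S_n}} = \prod_{k=1}^n \E{e^{\lambda X_k}}$, and since each $X_k$ takes values in $[0,1]$, convexity of $x \mapsto e^{\lambda x}$ on $[0,1]$ gives the linear upper bound $e^{\lambda x} \leq 1 + (e^\lambda - 1)x$, so that $\E{e^{\lambda X_k}} \leq 1 + (e^\lambda - 1)\E{X_k} \leq \exp\bigl((e^\lambda - 1)\E{X_k}\bigr)$. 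Taking the product yields the standard bound $\E{e^{\lambda S_n}} \leq \exp\bigl((e^\lambda - 1)\mu\bigr)$.

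Next I would optimize over $\lambda > 0$. The minimizer is $\lambda = \ln(1+\epsilon)$, which produces the usual Chernoff expression
\[
\p{S_n > (1+\epsilon)\mu} \leq \left(\frac{e^\epsilon}{(1+\epsilon)^{1+\epsilon}}\right)^{\!\mu} = \exp\bigl(-\mu\,\psi(\epsilon)\bigr), \qquad \psi(\epsilon) := (1+\epsilon)\ln(1+\epsilon) - \epsilon.
\]
The analogous computation with $\lambda < 0$ (choosing $\lambda = \ln(1-\epsilon)$) yields the lower-tail bound $\p{S_n < (1-\epsilon)\mu} \leq \exp\bigl(-\mu\,\tilde\psi(\epsilon)\bigr)$ with $\tilde\psi(\epsilon) := (1-\epsilon)\ln(1-\epsilon) + \epsilon$.

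The final, purely analytic step is to verify $\psi(\epsilon) \wedge \tilde\psi(\epsilon) \geq \epsilon^2/4$ for $\epsilon \in (0,1)$, after which a union bound over the two tails and a factor-of-$2$ absorbing adjustment (or sharper direct bookkeeping showing the individual constant $1/3$ for the upper tail and $1/2$ for the lower tail, both dominated by $1/4$) deliver the claimed inequality. I expect this last analytic comparison to be the main, though routine, obstacle: it reduces to Taylor-expanding $\psi$ and $\tilde\psi$ around $0$ and checking monotonicity of the ratios $\psi(\epsilon)/\epsilon^2$ and $\tilde\psi(\epsilon)/\epsilon^2$ on $(0,1)$, which is standard. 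Alternatively, one may simply cite the statement directly from \cite{MCDI:98}, where this form is recorded.
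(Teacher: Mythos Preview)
The paper does not prove this lemma at all; it is stated as a citation to McDiarmid \cite{MCDI:98} and used as a black box. Your Chernoff approach is exactly the standard derivation underlying such bounds, so in spirit you are reproducing what the cited reference does rather than offering an alternative.

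One point to be careful about: your final ``factor-of-$2$ absorbing adjustment'' is not automatic. Having each one-sided tail bounded by $e^{-\epsilon^2\mu/3}$ (upper) and $e^{-\epsilon^2\mu/2}$ (lower) gives, after the union bound, $2e^{-\epsilon^2\mu/3}$, and the inequality $2e^{-\epsilon^2\mu/3} < e^{-\epsilon^2\mu/4}$ fails when $\epsilon^2\mu$ is small. To land exactly on the stated constant $1/4$ without the prefactor $2$, you either need a slightly sharper one-sided constant on $(0,1)$ (e.g.\ $\psi(\epsilon)\wedge\tilde\psi(\epsilon)\ge \tfrac{3}{8}\epsilon^2$ together with a separate trivial argument for small $\epsilon^2\mu$), or simply invoke the result as recorded in \cite{MCDI:98}, which is precisely what the paper does.
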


Let $(\ell_N)_{N\in\N}$ and $(a_j^N)_{N,j\in\N}$ be sequences of random variables. For all $N\in\N$, let $\F_N$ be a $\sigma$-algebra such that $\ell_N,a_1^N,\dots,a_{\ell_N}^N\in\F_N$. Let $(\xi_j^N))_{N,j\in\N}$ be random variables with $\xi_j^N\in\{0,1\}$ for all $j,N\in\N$. Suppose that, conditioned on $\F_N$, the random variables $(\xi_j^N))_{N,j\in\N}$ are independent, and they are distributed as Ber$(a_j^N\wedge 1)$. We also let $S_{\ell_N} := \sum_{j=1}^{\ell_N}\xi_j^N$ and $\E{S_{\ell_N}\;|\;\F_N} =: \mu_N$ for all $N\in\N$.

\begin{lem}\label{lem:concentration}
\begin{enumerate}[(a)]
\item 
If there exists a constant $C_1>0$ such that $\frac{\log \mu_N}{\log N} \xrightarrow{\mathbb P} C_1$, then
\[
\frac{\log S_{\ell_N}}{\log N} \xrightarrow{\mathbb P} C_1
\]
as $N\to\infty$.
\item 
If there exists a constant $C_2\in\R$ such that $\frac{\log \mu_N}{\log N} \lesssimP C_2$, then, as $N\to\infty$,
\[
\frac{\log S_{\ell_N}}{\log N}  \lesssimP C_2, \quad \text{if }C_2\geq 0,
\]
and
\[
S_{\ell_N} \xrightarrow{\mathbb{P}} 0, \quad \text{if }C_2<0.
\]

\end{enumerate}	
	
\end{lem}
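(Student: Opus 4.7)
The plan is to prove both parts by working conditionally on $\F_N$, under which the $\xi_j^N$ become genuinely independent Bernoulli variables with known parameters $a_j^N\wedge 1$, known count $\ell_N$, and consequently a deterministic conditional mean $\mu_N$. This renders Markov's inequality directly applicable (for the upper bounds) and allows us to invoke McDiarmid's inequality (Lemma~\ref{thm: McD}) for the matching lower bound in (a). The residual randomness of $\mu_N$ itself is absorbed by splitting on a high-probability good event on which $\mu_N$ is pinned to the correct logarithmic order by the hypothesis.

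For the upper bounds of (a) (with $C=C_1$) and of (b) in the case $C_2\geq 0$ (with $C=C_2$), I would fix $\eps>0$ and set $\Omega_N := \{\mu_N \leq N^{C+\eps/2}\}$, for which $\p{\Omega_N}\to 1$ by the hypothesis on $\mu_N$. Conditioning on $\F_N$, Markov's inequality yields
\[
\p{S_{\ell_N} > N^{C+\eps} \mid \F_N} \leq \mu_N\, N^{-C-\eps},
\]
which on $\Omega_N$ is at most $N^{-\eps/2}$. Taking expectations and combining with $\p{\Omega_N^c}\to 0$ gives $\p{S_{\ell_N} > N^{C+\eps}}\to 0$, hence $\log S_{\ell_N}/\log N \lesssimP C$. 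The remaining case $C_2<0$ of (b) follows from exactly the same argument at threshold $1$: since $S_{\ell_N}$ is integer-valued, $\p{S_{\ell_N}\geq 1 \mid \F_N}\leq \mu_N$, and on $\{\mu_N \leq N^{C_2/2}\}$ this bound is at most $N^{C_2/2}\to 0$.

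For the lower bound in (a), the positivity $C_1>0$ plays a decisive role. Fix $\eps\in(0,C_1)$ and let $\Omega_N := \{N^{C_1-\eps/2}\leq \mu_N \leq N^{C_1+\eps/2}\}$, which again has probability tending to $1$. Applying Lemma~\ref{thm: McD} conditionally on $\F_N$ with relative deviation $1/2$ gives
\[
\p{S_{\ell_N} < \mu_N/2 \mid \F_N} \leq e^{-\mu_N/16}.
\]
On $\Omega_N$, the inequality $\mu_N \geq N^{C_1-\eps/2}$ together with $C_1>0$ forces $\mu_N\to\infty$, so the exponential bound vanishes, while simultaneously $\mu_N/2 \geq N^{C_1-\eps}$ for all $N$ large enough. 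Integrating out yields $\p{S_{\ell_N} < N^{C_1-\eps}}\to 0$, i.e., $\log S_{\ell_N}/\log N \gtrsimP C_1$. Combined with the upper bound, this proves (a).

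The main obstacle is conceptual rather than computational: neither McDiarmid nor Markov applies directly to a sum of Bernoullis whose count and parameters are themselves random, so one must navigate the joint randomness of $\mu_N$ and the $\xi_j^N$. The remedy—conditioning on $\F_N$ and splitting on a good event that fixes the logarithmic order of $\mu_N$—is precisely what the hypotheses on $\log\mu_N/\log N$ are designed to support. The critical use of $C_1>0$ in (a) is to ensure that on the good event $\mu_N$ grows polynomially in $N$, which is exactly what makes McDiarmid's exponential tail effective; without this, a matching lower bound for $S_{\ell_N}$ would be unattainable by these tools.
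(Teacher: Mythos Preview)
Your proposal is correct and takes essentially the same approach as the paper: condition on $\F_N$, split on a high-probability event pinning $\mu_N$ to the correct logarithmic order, then apply Markov's inequality for upper bounds and McDiarmid's inequality for the lower bound in (a). The only cosmetic difference is that the paper handles both directions of (a) simultaneously via the two-sided McDiarmid bound $\p{|S_{\ell_N}-\mu_N|>\mu_N/2\mid\F_N}\leq e^{-\mu_N/16}$, whereas you treat the upper bound via Markov and the lower via one-sided McDiarmid; this changes nothing substantive.
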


\begin{proof}
We first prove part (a). Take $0<\eps<1\wedge C_1/2$. It is enough to show that the probability of the event $\{S_{\ell_N}\notin [N^{C_1-2\eps},N^{C_1+2\eps}]\}$ converges to zero as $N\to\infty$. Splitting this event based on the value of $\mu_N$ we get
\begin{multline}\label{eq: PSlN}
\p{S_{\ell_N}\notin [N^{C_1-2\eps},N^{C_1+2\eps}]} \leq \p{S_{\ell_N}\notin [N^{C_1-2\eps},N^{C_1+2\eps}], \mu_N\in [N^{C_1-\eps},N^{C_1+\eps}]} \\
+ \p{\mu_N\notin [N^{C_1-\eps},N^{C_1+\eps}]}.
\end{multline}

Notice that for $N$ sufficiently large we have
\[
\left\{ S_{\ell_N}+1\notin [N^{C_1-2\eps},N^{C_1+2\eps}], \mu_N\in [N^{C_1-\eps},N^{C_1+\eps}] \right\} \subseteq \left\{ |S_{\ell_N} - \mu_N| > \frac{\mu_N}{2} \right\}.
\]
Note furthermore, that the second term on the right-hand side of~\eqref{eq: PSlN} converges to zero as $N\to\infty$ by the assumption of the lemma on $\mu_N$.
Hence, continuing~\eqref{eq: PSlN}, and first conditioning on $\F_N$, and then applying Lemma~\ref{thm: McD}, for $N$ sufficiently large we obtain
\begin{align*}
\p{S_{\ell_N}\notin [N^{C_1-2\eps},N^{C_1+2\eps}]} & \leq \E{\p{|S_{\ell_N} - \mu_N| > \frac{\mu_N}{2}\;|\; \F_N}\1_{\{ \mu_N\in [N^{C_1-\eps},N^{C_1+\eps}\}}} + o(1)
\\ &
\leq
\E{e^{-\frac{\mu_N}{16}}\1_{\{ \mu_N\in [N^{C_1-\eps},N^{C_1+\eps}\}}} +o(1)
\\ &
\leq
\E{e^{-N^{C_1-\eps}/16}} + o(1),
\end{align*}
which converges to zero as $N\to\infty$, showing part (a). 

We now move on to part (b). Take $0<\eps<1\wedge |C_2|/2$. Similarly to part (a), in this case we have
\begin{align*}
\p{S_{\ell_N} > N^{C_2+2\eps}} \leq \E{\p{S_{\ell_N} > N^{C_2+2\eps}\;|\; \F_N }\1_{\mu_N<N^{C_2+\eps}} } + \p{\mu_N > N^{C_2+\eps}}.
\end{align*}
By Markov's inequality and our assumption on $\mu_N$, and recalling that $\mu_N=\E{S_{\ell_N}\;|\;\F_N}$, we arrive at
\begin{align}\label{eq: SlN0}
\p{S_{\ell_N} > N^{C_2+2\eps}} \leq \E{\frac{\mu_N \1_{\{\mu_N<N^{C_2+\eps}\}} }{N^{C_2+2\eps}}} + o(1) \leq N^{-\eps} + o(1) \to 0, 
\end{align}
as $N\to\infty$. Note that~\eqref{eq: SlN0} holds for any $C_2\in\R$. If $C_2\geq 0$, then this shows the first statement of part (b). If $C_2<0$, then the second statement follows from~\eqref{eq: SlN0} and from the fact that $S_{\ell_N}$ takes integer values. 
\end{proof}

\subsection{Limiting log-profiles after reproduction}\label{sec:Reproduction}

For a function $g \in \mathcal{D}$, define $\bar{r}(g)$ and $r(g)$ by
\begin{equation}\label{eq: Rg}
\bar{r}(g)(x) :=
1 - \gamma + \sup_{z\in\R} (g(z) + h(x-z) ), \qquad \text{ and } \qquad r(g)(x):= \pi\big(\bar{r}(g)(x)\big).
\end{equation}

\begin{figure}[h!]
\begin{center}
\begin{tikzpicture}[scale=2.5]
\draw[->] (0,0) -- (0,1.2) node[above]{$y$};
\draw[dashed] (-4.5,0.6) -- (1, 0.6) node[right]{$\gamma$};
\draw[dashed] (-4.5, 1) -- (1, 1) node[right]{$1$};
\draw[red, smooth,very thick, domain=0:-4, variable = \x] plot({\x}, {-(0.15)*((\x+2)^2-4)});
\node at (-4.2,0.2) {\color{red} $g$};
\draw[black, smooth,very thick, domain=0.58:-4, variable = \x] plot({\x}, {-(0.15)*((\x+2)^2-3)+0.55});
\node at (-4.2,0.75) {$r(g)$};

\draw[<->,thick] (-4.5,0) -- (1,0) node[right]{$x$};
\end{tikzpicture}
\end{center}
\caption{A potential outcome of applying the operator $r$ to the log-profile $g$ (in red), producing the functions $r(g)$ (in black).}
\end{figure}
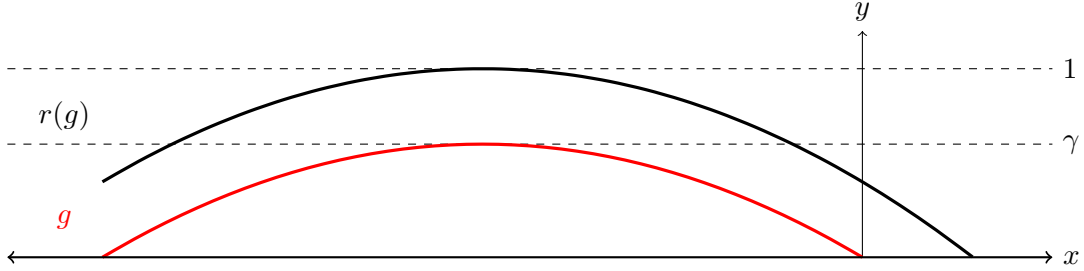

Let $\big(X_{i,j} :\: i \in [N^{\gamma}], j \in [N^{1 - \gamma}]\big)$ be an array of i.i.d.~random variables  distributed as $X$. Consider a sequence of point measures $({\cal Z}_N = ({Z}_{N,i})_{i=1}^{N^\gamma})_{N \in \N}$. We assume that the sequence of measure 
 $$
 M_{N} \ := \ \sum_{i=1}^{N^\gamma} \delta_{Z_{N,i}/c_N}
 $$
has a limiting log-profile $g \in \mathcal{D}$. Finally, we define the profile after reproduction as 
$$
M^R_N \ = \ \sum_{i=1}^{N^\gamma} \sum_{j=1}^{N^{1-\gamma}} \delta_{(Z_{N,i} +X_{i,j})/c_N}.
$$
The goal of this section is to prove the following Lemma: 

\begin{lem}\label{lem:Reproduction}
The sequence of measures $(M_N^R)_{N \in \N}$ has limiting log-profile $r(g) \in \mathcal{D}$. 
\end{lem}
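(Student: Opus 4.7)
The plan is to work conditionally on $M_N$: given $M_N$, the count $M_N^R((a,b])$ is a sum of $N$ conditionally independent Bernoulli random variables indexed by the $N^\gamma \cdot N^{1-\gamma}$ parent--child pairs, with success probabilities depending only on the parent location. Writing $p_N(z) := \mathbb{P}(X/c_N \in (a-z, b-z]) = F_X(c_N(b-z)) - F_X(c_N(a-z))$, the conditional expectation equals
\[
\mathbb{E}\bigl[M_N^R((a,b]) \mid M_N\bigr] \;=\; N^{1-\gamma}\!\int_{\R} p_N(z)\,dM_N(z).
\]
The scheme is (i) to identify the logarithmic limit of $p_N$, (ii) integrate against $dM_N$ using Lemma~\ref{lem:integral}, and (iii) upgrade to the actual count via the concentration bound in Lemma~\ref{lem:concentration}.

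First I would determine the pointwise asymptotics $f_N(z) := \log p_N(z)/\log N \to f(z) := \sup_{x \in [a,b]} h(x-z)$. The tail hypothesis~\eqref{eq:Karamata} yields, by case analysis: for $z<a$ the right tail dominates, so $p_N(z) \sim 1 - F_X(c_N(a-z)) = N^{h(a-z) + o(1)}$; for $z>b$ the left tail dominates and $p_N(z) = N^{h(b-z) + o(1)}$; and for $z \in (a,b)$ the interval $(a-z, b-z]$ straddles $0$, so $p_N(z) \to \mathbb{P}(X \in (a-z, b-z]) > 0$, giving $f(z) = 0 = h(0)$. Since $h$ is continuous, so is $f$, and the pointwise convergence combined with the monotonicity of $p_N$ in $z$ on each of the half-lines $\{z<a\}$ and $\{z>b\}$ should yield the sup/inf regularity~\eqref{eq:IntLemmaSupCond}--\eqref{eq:IntLemmaInfCond} required by Lemma~\ref{lem:integral}.

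Choosing $a,b$ as continuity points of $g$ (and extending later by density), Lemma~\ref{lem:integral} applied to the integral representation above gives
\[
\frac{\log \mathbb{E}[M_N^R((a,b]) \mid M_N]}{\log N} \;\xrightarrow{\mathbb{P}}\; 1 - \gamma + \sup_z\!\bigl(g(z) + \sup_{x\in[a,b]} h(x-z)\bigr) \;=\; \sup_{x \in (a,b]} \bar r(g)(x).
\]
I then apply Lemma~\ref{lem:concentration} conditionally on $M_N$: when $\sup_{(a,b]} \bar r(g) > 0$, part (a) yields $\log M_N^R((a,b])/\log N \xrightarrow{\mathbb{P}} \sup_{(a,b]} r(g)$; when $\sup_{(a,b]} \bar r(g) < 0$, part (b) forces $M_N^R((a,b]) \to 0$ in probability, which by integer-valuedness matches $\sup_{(a,b]} r(g) = -\infty$. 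The borderline case $\sup_{(a,b]} \bar r(g) = 0$ is handled by combining the upper bound from Lemma~\ref{lem:concentration}(b) with the trivial lower bound $\log M_N^R \geq 0$ once $M_N^R \geq 1$.

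It remains to verify $r(g) \in \mathcal{D}$. Bounded support follows because $h(y) \to -\infty$ as $|y| \to \infty$, which drives $\bar r(g)(x) \to -\infty$ for $|x|$ large, the supremum being effectively over the bounded support of $g$. The required one-sided limits---in fact full continuity---of $\bar r(g)$ follow from the estimate $|\bar r(g)(x) - \bar r(g)(x')| \leq \sup_z |h(x-z) - h(x'-z)|$ together with uniform continuity of $h$ on bounded intervals. Continuity of $r(g)$ then lets me remove the restriction $a,b \notin D_g$ by sandwiching $(a,b]$ between nearby intervals whose endpoints are continuity points of $g$ (which exist densely since $D_g$ is countable). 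The main technical hurdle will be step (i): upgrading the pointwise Karamata asymptotic into the sup/inf conditions of Lemma~\ref{lem:integral} uniformly enough in $z$, especially near the transition points $z = a$ and $z = b$ where the straddle regime gives way to the tail regime.
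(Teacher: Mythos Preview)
Your overall strategy matches the paper's: compute the conditional mean as an integral against $dM_N$, apply Lemma~\ref{lem:integral} to identify its logarithmic limit as $\sup_{(a,b]}\bar r(g)$, then upgrade to the actual count via Lemma~\ref{lem:concentration}. The identification $f_N(z)\to\sup_{x\in[a,b]}h(x-z)$ is the paper's $H_{a,b}$, and your argument that $\bar r(g)$ is continuous (hence $r(g)\in\mathcal{D}$) is sound.

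The genuine gap is your handling of the borderline case $\sup_{(a,b]}\bar r(g)=0$. The proposed lower bound ``$\log M_N^R\geq 0$ once $M_N^R\geq 1$'' is circular: when the conditional mean is $N^{0+o(1)}$ there is no concentration, and nothing you have written forces $M_N^R((a,b])\geq 1$ with high probability---the count is genuinely $O(1)$ and may vanish with non-negligible probability (this is exactly the failure mode exhibited in Section~\ref{sec:Examples} for the selection step). The paper does \emph{not} attempt to prove convergence in this case. Instead it shows the borderline case never occurs for the endpoints that matter: using strict monotonicity of $h$ away from~$0$ and the fact that $g\equiv-\infty$ near any $x$ with $\bar r(g)(x)\leq 0$, one gets $\bar r(g)(x-\delta)>\bar r(g)(x)$ or $\bar r(g)(x+\delta)>\bar r(g)(x)$ for small~$\delta$, which forces $\sup_{(a+\delta,b-\delta)}\bar r<0<\sup_{(a-\delta,b+\delta)}\bar r$ and hence $a$ or $b$ lies in $D_{r(g)}$. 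Thus $a,b\notin D_{r(g)}\Longrightarrow\sup_{(a,b]}\bar r(g)\neq 0$, and the borderline case is simply excluded by the definition of limiting log-profile. Relatedly, your claim ``continuity of $r(g)$'' is false: $r(g)=\pi(\bar r(g))$ jumps to $-\infty$ wherever $\bar r(g)$ crosses zero, so $r(g)$ is typically discontinuous at the edges of its support. What the sandwiching step actually requires is only continuity of $r(g)$ \emph{at} the chosen $a$ and $b$, which is precisely the hypothesis $a,b\notin D_{r(g)}$.
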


To prove Lemma \ref{lem:Reproduction} we make use of Lemma \ref{lem:integral} and Lemma \ref{lem:concentration} to prove a convergence in probability after scaling for $M_N^R(a,b)$. However, to apply Lemma \ref{lem:integral}, certain continuity conditions at $a$ and $b$ must be satisfied, while we require that the supremum of $\bar{r}$ in the interval $(a,b]$ be nonzero to apply Lemma \ref{lem:concentration}. Therefore, we first apply our probabilistic argument where these conditions are met to achieve Lemma \ref{lem:ReproductionProb} below, and finally use some technical continuity arguments to finish the proof of Lemma \ref{lem:Reproduction}. 

\begin{lem}\label{lem:ReproductionProb}
 For all $a,b \in \R\setminus D_{g}$, if $\sup_{x \in (a,b]} \bar{r}(g)(x) \neq 0$, then 
 \[ \frac{\log M_N^R(a,b)}{\log N} \xrightarrow{\P} \sup_{x \in (a,b]} r(g)(x).\]
 \end{lem}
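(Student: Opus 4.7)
My approach is to condition on the parent configuration, evaluate the conditional mean of $M_N^R(a,b)$ via Lemma~\ref{lem:integral}, and then transfer the asymptotics to $M_N^R(a,b)$ itself using the Bernoulli concentration in Lemma~\ref{lem:concentration}.

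First I would fix $a,b \in \R\setminus D_g$ with $\sup_{y \in (a,b]}\bar r(g)(y) \neq 0$ and write
\[
M_N^R(a,b) = \sum_{i=1}^{N^\gamma}\sum_{j=1}^{N^{1-\gamma}} \xi_{i,j}^N, \qquad \xi_{i,j}^N := \mathbf{1}\{Z_{N,i} + X_{i,j} \in (ac_N, bc_N]\}.
\]
Setting $\mathcal{F}_N := \sigma(Z_{N,1},\ldots,Z_{N,N^\gamma})$, conditionally on $\mathcal{F}_N$ the $\xi_{i,j}^N$ are independent Bernoullis, and the conditional mean can be written as a Riemann--Stieltjes integral against $M_N$:
\[
\mu_N \ :=\ \E{M_N^R(a,b) \mid \mathcal{F}_N} \ =\ N^{1-\gamma}\!\int_\R N^{f_N(z)}\,dM_N(z),\qquad f_N(z) := \frac{\log \P\!\left(X \in ((a-z)c_N,(b-z)c_N]\right)}{\log N}.
\]

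Next I would identify the pointwise limit $f(z) := \sup_{y \in (a,b]} h(y-z)$ and verify the hypotheses of Lemma~\ref{lem:integral}. A case split on the sign of $a-z$ and $b-z$ reduces the asymptotics of $f_N(z)$ to the tail assumption \eqref{eq:Karamata}: when $z<a$ the monotonicity of $1-F_X$ on the right makes the probability comparable to $1-F_X((a-z)c_N)$, giving $f_N(z) \to h(a-z)$; when $z>b$ and $c_- < \infty$, monotonicity of $F_X$ on the left gives $f_N(z) \to h(b-z)$; when $z \in (a,b)$ the probability is bounded below by a positive constant, so $f_N(z) \to 0$; and when $z>b$ with $c_-=\infty$, the probability decays faster than any power of $N$ so $f_N(z) \to -\infty$. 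Monotonicity of $F_X$ promotes these pointwise statements to local uniform convergence on $\R\setminus\{a,b\}$ (Dini-type), which implies conditions \eqref{eq:IntLemmaSupCond} and \eqref{eq:IntLemmaInfCond}. The discontinuity set $D_f$ is contained in $\{a,b\}$, which is disjoint from $D_g$ by hypothesis, so Lemma~\ref{lem:integral} applies on $\R$ (taking $a \to -\infty$, $b \to \infty$ there) and yields
\[
\frac{\log \mu_N}{\log N} \ \xrightarrow{\P}\ 1-\gamma + \sup_{z\in\R}(f(z) + g(z)) \ =\ \sup_{y \in (a,b]} \bar r(g)(y),
\]
where the last equality is a swap of suprema in the definition of $f$.

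Finally I would apply Lemma~\ref{lem:concentration} with the above $\mu_N$ and $C := \sup_{y \in (a,b]} \bar r(g)(y)$. The hypothesis $C \neq 0$ is exactly what makes concentration applicable: if $C>0$ then part~(a) gives $\log M_N^R(a,b)/\log N \xrightarrow{\P} C$, and $C = \sup_{y \in (a,b]} r(g)(y)$ since $\pi$ fixes positive values; if $C<0$ (possibly $-\infty$), then part~(b) gives $M_N^R(a,b) \xrightarrow{\P} 0$, so $\log M_N^R(a,b)/\log N \xrightarrow{\P} -\infty = \sup_{y \in (a,b]} r(g)(y)$, because $\pi$ collapses all negative values to $-\infty$. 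Either way the conclusion of the lemma follows.

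\textbf{Main obstacle.} The technical heart is the convergence $f_N \to f$ in the sense required by Lemma~\ref{lem:integral}, which hinges on promoting the pointwise statement \eqref{eq:Karamata} to local uniformity and on handling the boundary behavior of $h$ at $0$ when $c_-=\infty$ (where $h$ jumps from $0$ to $-\infty$). Everything else---the conditional Bernoulli representation, the swap of suprema, and the invocation of the two lemmas---is bookkeeping.
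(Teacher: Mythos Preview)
Your proposal is correct and follows essentially the same approach as the paper: write $M_N^R(a,b)$ as a sum of conditional Bernoullis, express the conditional mean as $N^{1-\gamma}\int N^{f_N(z)}\,dM_N(z)$, apply Lemma~\ref{lem:integral} with $f(z)=\sup_{y\in(a,b]}h(y-z)$ (the paper calls this $H_{a,b}(z)$), swap suprema to recover $\sup_{(a,b]}\bar r$, and then invoke Lemma~\ref{lem:concentration}(a) or (b) according to the sign. Your case analysis for $f_N\to f$ is slightly more explicit than the paper's, which simply asserts that \eqref{eq:IntLemmaSupCond}--\eqref{eq:IntLemmaInfCond} hold for $z\neq a,b$, but the substance is the same.
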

 
\begin{proof}
Set $\bar{r} := \bar{r}(g)$ and $r := r(g)$. 
We have 
\begin{equation}\label{eq: MRN}
M_N^R(a,b) = 
\int_{(ac_N,bc_N]} \sum_{i=1}^{N^\gamma} \sum_{j=1}^{N^{1-\gamma}} \delta_{X_{ij}+{Z}_{N,i}}(dx) =
\sum_{i=1}^{N^\gamma} \sum_{j=1}^{N^{1 - \gamma}} \1_{\{X_{i,j} \in (ac_N -{Z}_{N,i}, bc_N - {Z}_{N,i}]\}}.
\end{equation}
We first show that for all $a,b \in \R\setminus D_{g}$ with $a<b$, 
\begin{equation}\label{eq:RepA}
A_N(a,b) := \frac{\log(\E{M_N^R(a,b)\;|\; \sigma( \mathcal{Z}_N)})}{\log N} \xrightarrow{\P} \sup_{x \in (a,b]} \bar{r}(x).
\end{equation}
For all $a<b$, define
\[ p_N(a,b) := \frac{\log\left(F_X(bc_N) - F_X(ac_N)\right)}{\log N}.\]
From our assumptions of the tail of $X$, one can verify that for all $z \in \R$, 
\[ p_N(a-z, b-z) \xrightarrow{N \to \infty} H_{a,b}(z) := \!\! \sup_{x \in (a-z,b-z]} \!\! h(x),\]
and that for $a-z, b-z \neq 0$, 
\begin{equation}\label{eq:IntLemCond}
\begin{aligned}
\lim_{\delta \to 0}\lim_{N \to \infty} \sup_{x \in B(z,\delta)} p_N(a-x, b-x) 
&= \lim_{\delta \to 0} \sup_{x \in B(z,\delta)} H_{a,b}(x),\\
\lim_{\delta \to 0}\lim_{N \to \infty} \inf_{x \in B(z,\delta)} p_N(a-x, b-x) 
&= \lim_{\delta \to 0} \inf_{x \in B(z,\delta)} H_{a,b}(x).
\end{aligned}
\end{equation}
The function $H_{a,b}(z)$ has at most two points of discontinuity (when $z=a$ or $z=b$) and is c\`all\`al. Since all $X_{i,j}$ are i.i.d.,~\eqref{eq: MRN} gives us
\begin{align*}
A_N(a,b) &= \frac{1}{\log N}\log \left(\sum_{i=1}^{N^\gamma} N^{1-\gamma} \p{X \in (ac_N - Z_{N,i}, bc_N - Z_{N,i}]\;|\;\sigma( \mathcal{Z}_N)}\right)
\\ &
=\frac{1}{\log N} \log\left(\int_{\R}N^{1-\gamma} N^{p_N(a-z, b-z)}dM_N(z)  \right). 
\end{align*}
From \eqref{eq:IntLemCond}, so long as $a,b \not\in D_g$ (and so $D_{H_{a,b}}\cap D_g = \emptyset$), we may apply Lemma \ref{lem:integral} and see that 
\[ A_N(a,b) \xrightarrow{\P} \sup_{z \in \R}\left( 1- \gamma + g(z) + H_{a,b}(z)\right).\]
Observe that by the definition of the function $H_{a,b}$ and by swapping the two suprema, we have
\begin{align*}
\sup_{z\in\R}(1 - \gamma+ g(z) + H_{a,b}(z) ) & = \sup_{z\in\R}\left(1-\gamma+ g(z) + \sup_{x\in(a-z,b-z]}h(x)\right) \\
& = \sup_{x\in(a,b]}\left(1 - \gamma+ \sup_{z\in\R} \{g(z) + h(x-z) \}\right)
\\ &
= \sup_{x\in(a,b]} \bar r(x),
\end{align*}
and so \eqref{eq:RepA} holds.\\

Now apply Lemma~\ref{lem:concentration} with $\ell_N=N$, $a_{i,j}^N = \p{X \in (ac_N - Z_{N,i}, bc_N - Z_{N,i}]}$ for all $i\in[N^\gamma]$ and $j\in[N^{1 - \gamma}]$, $\xi_{i,j} = \1_{\{X_{i,j}\in (ac_N - Z_{N,i}, bc_N - Z_{N,i}]\}}$, with $\F_N =\sigma({\cal Z}_N)$. By 
Lemma~\ref{lem:concentration}(a) we conclude that
\begin{equation*}
\frac{\log M^R_N(a,b)}{\log N}\overset{\mathbb P}{\to} \sup_{x\in(a,b]} \bar r(x) = \sup_{x\in(a,b]} r(x),
\end{equation*} 
if $\sup_{x\in(a,b]} \bar r(x)>0$; and by Lemma~\ref{lem:concentration}(b) we see that
\begin{equation*}
M^R_N(a,b) \overset{\mathbb P}{\to} 0,
\end{equation*}
if $\sup_{x\in(a,b]} \bar r(x)<0$, which concludes the proof.
\end{proof}

\begin{proof}[Proof of Lemma \ref{lem:Reproduction}]
Once more let $\bar{r} := \bar{r}(g)$ and $r := r(g)$. 
It is an elementary argument to show that $r(g) \in \mathcal{D}$, that is left to the reader. We now use the previous lemma to prove that $r$ is the limiting log-profile of $M_N^R$.

 \medskip

Assume that $a,b \in \R$ such that $\sup_{x \in (a,b]}\bar{r}(x) = 0$. We show that either $a$ or $b$ must be a discontinuity point of $r$. Recalling $h$ from \eqref{eq:defh}, $h^{-1}(\{x \geq -(1-\gamma)\})$ is a closed interval containing $0$, call it $[-p,q]$. Note that $q > 0$ and that $p> 0$ if $c_- \neq \infty$ and $p=0$ otherwise, while $h(x) > -(1-\gamma)$ for all $x \in (-p,q)$. Therefore, for any $x \in \R$,
\[\text{if } g(z) \geq 0 \text{ for some } z \in (x-q,x+p) \text{, then } \bar{r}(x) \geq 1-\gamma +\!\! \sup_{z \in (x-q,x+p)} \!\! \left(g(z) + h(x-z)\right) > 0.\]
Fix $x \in (a,b]$. Since $\bar{r}(x) \leq 0$, the above implies $g(z) = -\infty$ for all $x \in (x-q,x+p)$ and
\begin{equation}\label{eq:Whererbar} 
\bar{r}(x) =1 - \gamma + \sup_{z \leq x - q}\left(g(z) + h(x-z)\right) \,\, \text{ or } \,\, \bar{r}(x) = 1 - \gamma + \sup_{z \geq x+p}\left(g(z) + h(x-z)\right).
\end{equation}
Suppose $\bar{r}(x) \neq -\infty$ and is given by the first expression of \eqref{eq:Whererbar}. Since the function $h$ is strictly decreasing on $\R_+$ and $g$ has bounded support, we see that for all $q > \delta > 0$, 
\[ \sup_{z \in \R}\left(g(z) + h(x-\delta - z)\right) \geq \sup_{z \leq x-q}\left(g(z) + h(x - \delta - z)\right) > \sup_{z \leq x- q}\left(g(z) + h(x-z)\right),\]
and so $\bar{r}(x-\delta) > \bar{r}(x)$. If $\bar{r}(x) \neq -\infty$ and is given by the second expression of \eqref{eq:Whererbar}, then $c_- \neq \infty$, and since the function $h$ is strictly increasing on $\R_-$, a similar argument shows that $\bar{r}(x+\delta) > \bar{r}(x)$ for all $p > \delta > 0$. In all cases, we see that since $\sup_{x \in (a,b]}\bar{r}(x) = 0$,
\begin{equation}\label{eq:abDiscR}
\forall \delta > 0 \text{ small enough}, \qquad \sup_{x \in (a+\delta,b-\delta)}\bar{r}(x) < 0 \qquad \text{and} \qquad \sup_{x \in (a-\delta,b+\delta)}\bar{r}(x) > 0.
\end{equation}
The function $\pi$ sends negative values to $-\infty$, and so \eqref{eq:abDiscR} implies that at least one of $a$ or $b$ is a discontinuity point of $r = \pi(\bar{r})$. Therefore, 
\begin{equation}\label{eq:rCondno0}
a,b \in \R\setminus D_r \,\, \Longrightarrow\,\, \sup_{x \in (a,b]} \bar{r}(x) \neq 0.
\end{equation}

\medskip

 Now take any continuity points $a,b \in \R\setminus D_{r}$. Since $D_g \cup D_r$ is countable, we may find $a^+ >a$ and $b^- < b$ such that $a^+, b^- \not\in D_g \cup D_r$ and
\[ \sup_{x \in (a^+, b^-]}r(x) = \sup_{x \in (a,b]} r(x) = -\infty, \qquad \text{or} \qquad \sup_{x \in (a,b]} r(x) - \sup_{x \in (a^+, b^-]} r(x) < \eps.\]
Since $\sup_{x \in (a^+,b^-]}r(x) \neq 0$, we use Lemma \ref{lem:ReproductionProb} and get 
\[ \frac{\log M_N^R(a,b)}{\log N} \geq \frac{\log M_N^R(a^+,b^-)}{\log N} \xrightarrow{\P} \sup_{x \in (a^+,b^-]}r(x).\]
A similar argument for an upper bound, and letting $\eps \to 0$, we see that indeed 
\[ \frac{\log M_N^R(a,b)}{\log N} \xrightarrow{\P} \sup_{x \in (a,b]}r(x).\]
This holds for all $a,b \in \R\setminus D_r$, and so along with \eqref{eq:rCondno0}, $r$ is the limiting log-profile of $(M_N^R)_{N\in\N}$, as desired.
\end{proof}

\subsection{Limiting log-profiles after sampling}\label{sec:Sampling}

Let $r\in{\cal D}$.
For any $\sigma \in \R$, define $\bar{s}_\sigma(r)$ and $s_\sigma(r)$ by 
\begin{equation}\label{eq: Ssigmar}
\bar{s}_\sigma(r)(x) := r(x) + \beta(x - \sigma)_-, \qquad \text{ and } \qquad s_\sigma(r)(x) := \pi \big( \bar{s}_\sigma(r)(x)\big).
\end{equation}
We then define 
\begin{equation}\label{eq: sigma*}
\sigma^*(r) := \inf\left\{ \sigma :\: \sup_{z \in \R}s_\sigma(r)(z) \leq \gamma\right\}
\end{equation}
and finally, we define $\bar{s}(r)$ and $s(r)$ by 
\begin{equation}\label{eq: S}
\bar{s}(r) := \bar{s}_{\sigma^*(r)}(r), \qquad \text{ and } \qquad s(r) := s_{\sigma^*(r)}(r) = \pi(\bar{s}(r) ).
\end{equation}

\begin{figure}[h!]
\centering

\begin{tikzpicture}[scale=2.5]
\draw[<->] (-4.5,0) -- (1,0) node[right]{$x$};
\draw[->] (0,0) -- (0,1.2) node[above]{$y$};
\draw[dashed] (-4.5,0.6) -- (1, 0.6) node[right]{$\gamma$};
\draw[dashed] (-4.5, 1) -- (1, 1) node[right]{$1$};
\draw[black, smooth,very thick, domain=0.58:-4, variable = \x] plot({\x}, {-(0.15)*((\x+2)^2-3)+0.55});
\node at (-4.2,0.8) { $r$};
\draw[orange, smooth,very thick, domain=0.26:-2.58, variable = \x] plot({\x}, {-(0.15)*((\x+2)^2-3)+0.55 +min(0,0.25*(\x-1.2)});
\node at (-1.3,0.1) {\color{orange} $s_{\sigma_1}(r)$};
\draw[blue, smooth,very thick, domain=0.4:-3.18, variable = \x] plot({\x}, {-(0.15)*((\x+2)^2-3)+0.55 +min(0,0.25*(\x-0)});
\node at (-3.3,0.1) {\color{blue} $s(r)$};
\draw[violet, smooth,very thick, domain=0.58:-3.62, variable = \x] plot({\x}, {-(0.15)*((\x+2)^2-3)+0.55 +min(0,0.25*(\x+1.2)});
\node at (-3.9,0.1) {\color{violet} $s_{\sigma_2}(r)$};

\draw[<->,thick] (-4.5,0) -- (1,0) node[right]{$x$};
\end{tikzpicture}
\caption{Applications of the operator $s_\sigma$ to $r$ (in black). The functions $s_\sigma(r)$ are portrayed for three values $\sigma_1 > \sigma^{\ast}(r) > \sigma_2$, where $s_{\sigma_1}(r)$ is in violet, $s(r) = s_{\sigma^\ast(r)}(r)$ is in blue, and $s_{\sigma_2}(r)$ is in orange.}

\end{figure}
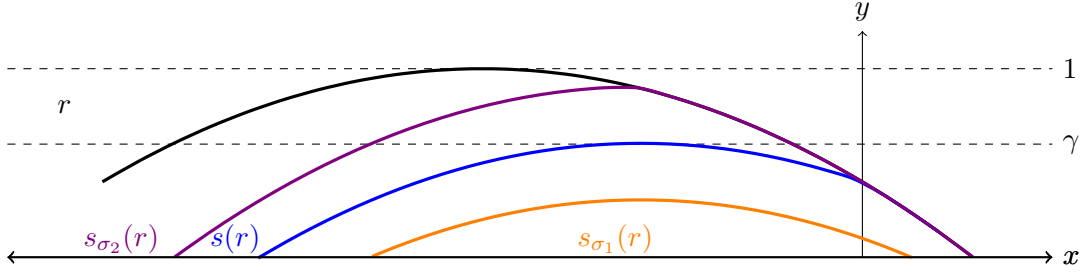

Let $\Rr_N = (R_{N,j})_{j \in [N]}$ denote a random set of points. We think of $\Rr_N$ as the point set after the reproduction step and
we wish to sample $N^\gamma$  particles $\mathcal{S}_N = (S_{N,i})_{i \in[N^\gamma]}$ from this set without replacement, where each particle $R_{N,j}$ has relative weight 
\begin{equation}\label{eq:weights}
w_{N,j} := e^{\beta_N R_{N,j}},
\end{equation}
and $\beta_N$ is given by~\eqref{eq:betaNrN}.
We define the sequences of measures
\[ M_N^R := \sum_{j=1}^N \delta_{R_{N,j}/c_N},\qquad \text{and} \qquad M_N^S := \sum_{i=1}^{N^\gamma} \delta_{S_{N,i}/c_N},\]
and assume that $(M_N^R)_{N\in\N}$ has limiting log-profile $r \in \mathcal{D}$. Our goal in this section is to prove the following:

\begin{lem}\label{lem:Sampling}
Assume that $\sigma^*:= \sigma^\ast(r)$
is the unique value of $\sigma$ such that $\sup s_{\sigma}(r)= \gamma$.
For all $a,b \in \R\setminus D_{r}$ such that $\sup_{x \in (a,b]}\bar{s}(r)(x) \neq 0$,
\begin{equation}\label{eq:SamplingLemmaEq} \frac{\log M_N^S(a,b)}{\log N} \xrightarrow{\P} \sup_{x \in (a,b]} s(r)(x).
\end{equation}
\end{lem}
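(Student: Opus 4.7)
The plan is to realize sampling without replacement through independent exponential clocks and to analyze, on the $\log/\log N$ scale, the configuration of particles whose clock has rung before a deterministic threshold $N^{-\beta\sigma}$. Conditionally on $\Rr_N$, attach to each particle $R_{N,j}$ an independent clock $E_j\sim\mathrm{Exp}(w_{N,j})$ with $w_{N,j}=N^{\beta R_{N,j}/c_N}$; the indices selected by $\Sc_N$ are exactly those corresponding to the $N^\gamma$ smallest values of $E_j$. For every $\sigma\in\R$, introduce the auxiliary point measure
\[M_N^\sigma \;:=\; \sum_{j=1}^N \delta_{R_{N,j}/c_N}\,\mathds{1}\{E_j\le N^{-\beta\sigma}\},\]
and let $E_{(N^\gamma)}=N^{-\beta\sigma_N}$ denote the $N^\gamma$-th order statistic of the clocks, so that one has the deterministic monotone sandwich $M_N^{\sigma'}(a,b)\le M_N^S(a,b)\le M_N^{\sigma''}(a,b)$ whenever $\sigma'\le\sigma_N\le\sigma''$.

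\textbf{Fixed-$\sigma$ analysis.} Conditional on $\Rr_N$, particle $j$ is kept in $M_N^\sigma$ with probability $1-\exp(-N^{\beta(R_{N,j}/c_N-\sigma)})$. On the logarithmic scale this introduces the factor
\[f_N(x;\sigma):=\frac{\log\bigl(1-e^{-N^{\beta(x-\sigma)}}\bigr)}{\log N},\]
which converges to $\beta(x-\sigma)_-$ and, away from the single transition point $x=\sigma$, satisfies the hypotheses \eqref{eq:IntLemmaSupCond}--\eqref{eq:IntLemmaInfCond} of Lemma~\ref{lem:integral}. Applying Lemma~\ref{lem:integral} to the conditional mean
\[\E\bigl[M_N^\sigma(a,b)\,\big|\,\Rr_N\bigr]\;=\;\int_{(a,b]}N^{f_N(x;\sigma)}\,dM_N^R(x),\]
valid for $a,b\in\R\setminus(D_r\cup\{\sigma\})$, yields $\log\E[M_N^\sigma(a,b)\,|\,\Rr_N]/\log N\xrightarrow{\P}\sup_{x\in(a,b]}\bar s_\sigma(r)(x)$. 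Since conditional on $\Rr_N$ the indicators $\mathds{1}\{E_j\le N^{-\beta\sigma}\}$ are independent Bernoullis, Lemma~\ref{lem:concentration} transfers the convergence from the conditional mean to the count itself, giving $\log M_N^\sigma(a,b)/\log N \xrightarrow{\P} \sup_{(a,b]} s_\sigma(r)$ when this supremum is strictly positive, and $M_N^\sigma(a,b)\xrightarrow{\P}0$ when the unprojected exponent is strictly negative.

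\textbf{Localizing $\sigma_N$.} The map $\sigma\mapsto\sup_x s_\sigma(r)(x)$ is non-increasing in $\sigma$ and, by the uniqueness hypothesis, equals $\gamma$ only at $\sigma^*$. Hence for any $\sigma'<\sigma^*<\sigma''$ there exists $\eta>0$ with $\sup_x s_{\sigma'}(r)\ge\gamma+2\eta$ and $\sup_x s_{\sigma''}(r)\le\gamma-2\eta$. Applying the fixed-$\sigma$ analysis to the total mass (picking $a,b$ outside $\mathrm{Supp}(r)$) gives $|M_N^{\sigma'}|\gtrsimP N^{\gamma+\eta}$ and $|M_N^{\sigma''}|\lesssimP N^{\gamma-\eta}$, hence $\sigma'\le\sigma_N\le\sigma''$ with probability tending to one. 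The deterministic sandwich of Step~1 is therefore effective.

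\textbf{Conclusion and main difficulty.} Fix $a,b\in\R\setminus D_r$ with $\sup_{(a,b]}\bar s(r)\ne 0$. By continuity of $\sigma\mapsto\sup_{(a,b]} s_\sigma(r)$ at $\sigma^*$ (clear from the explicit form $r(x)+\beta(x-\sigma)_-$), pick $\sigma',\sigma''\notin D_r$ with $\sigma'<\sigma^*<\sigma''$ and both $\sup_{(a,b]}s_{\sigma'}(r)$, $\sup_{(a,b]}s_{\sigma''}(r)$ within $\eps$ of $\sup_{(a,b]}s(r)$ and sharing its sign. Combining the fixed-$\sigma$ result at $\sigma'$ and $\sigma''$ with the sandwich from the localization step gives
\[\sup_{(a,b]}s_{\sigma''}(r)-o(1)\;\lesssimP\;\frac{\log M_N^S(a,b)}{\log N}\;\lesssimP\;\sup_{(a,b]}s_{\sigma'}(r)+o(1),\]
and letting $\sigma',\sigma''\to\sigma^*$ (and $\eps\to 0$) yields \eqref{eq:SamplingLemmaEq}. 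The delicate point is the transition region $x\approx\sigma$, where $f_N$ passes from $\approx 0$ to $\approx\beta(x-\sigma)$ over a window of width $O(1/\log N)$: verifying the continuity hypotheses \eqref{eq:IntLemmaSupCond}--\eqref{eq:IntLemmaInfCond} requires a careful direct estimate there and forces $\sigma',\sigma''$ to be chosen outside the countable set $D_r\cup D_{s_\sigma(r)}$. The second subtlety is that without the uniqueness hypothesis, $\sigma\mapsto\sup_x s_\sigma(r)$ could be constant equal to $\gamma$ on an interval around $\sigma^*$, which would prevent the strict inequalities needed to localize $\sigma_N$.
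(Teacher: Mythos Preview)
Your proof is correct and follows essentially the same route as the paper: the exponential-clock coupling, the fixed-$\sigma$ analysis via Lemma~\ref{lem:integral} and Lemma~\ref{lem:concentration} (this is the paper's Lemma~\ref{prop:clocks}), the localization $\sigma_N\to\sigma^*$ from uniqueness, and the sandwich/continuity conclusion are all identical. One small slip: in your strategy paragraph the monotone sandwich is written in the wrong direction (since $\sigma\mapsto M_N^\sigma$ is non-increasing, $\sigma'\le\sigma_N\le\sigma''$ gives $M_N^{\sigma''}\le M_N^S\le M_N^{\sigma'}$), though you use the correct orientation in the conclusion; also, your worry about the ``transition region'' is unnecessary since $x\mapsto\beta(x-\sigma)_-$ is continuous and the convergence of $f_N$ is in fact uniform, so Remark~\ref{rmk:unifIntLemma} applies directly.
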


The conditions imposed on $\sigma^\ast$ and $\bar{s}$ in Lemma \ref{lem:Sampling} are required to guarantee the existence of a limiting log-profile for $M_N^S$. We explore what can fail if these conditions are not satisfied in Section \ref{sec:Examples}.

\bigskip

To prove Lemma \ref{lem:Sampling}, we perform our sampling without replacement using exponential clocks: a clock associated to a particle at location $R_{N,j}$ is an exponential random variable $\tau_{N,j}\sim\Exp(w_{N,j})$ for $w_{N,j}$ defined in \eqref{eq:weights}. Then the particles corresponding to the $N^\gamma$ smallest values of the $\tau_{N,j}$'s (i.e.~the $N^\gamma$ particles whose clocks rang the fastest) are selected. In analyzing the measures $M^S_N$ we will first investigate the number of clocks that have `gone off' by a certain time. For any $\sigma \in \R$, define 
\[ M^S_{N, \sigma} := \sum_{j=1}^{N} \delta_{R_{N,j}/c_N}\1_{\{\tau_{N,j} \leq N^{-\beta\sigma}\}}.\]

\begin{lem}\label{prop:clocks}
Suppose the sequence of point measures $(M^R_N)_{N \in \N}$  has limiting log-profile $r \in \mathcal{D}$. For all $a,b \in \R\setminus D_{r}$, if $\sup_{x \in (a,b]}\bar{s}_\sigma(r)(x) \neq 0$, then 
\[ \frac{\log M_{N,\sigma}^S(a,b)}{\log N} \xrightarrow{\P} \sup_{x \in (a,b]} s_\sigma(r)(x).\]
\end{lem}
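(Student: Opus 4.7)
The plan mirrors the strategy used for Lemma \ref{lem:Reproduction}: first identify the logarithmic asymptotics of the conditional expectation $\E{M_{N,\sigma}^S(a,b)\,\big|\,\Rr_N}$ via the Laplace-type Lemma \ref{lem:integral}, then transfer this to $M_{N,\sigma}^S(a,b)$ itself using the concentration Lemma \ref{lem:concentration}.

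Since $\beta_N R_{N,j} = \beta(R_{N,j}/c_N)\log N$, setting $y_j := R_{N,j}/c_N$ gives $w_{N,j} = N^{\beta y_j}$. Conditional on $\Rr_N$ the clocks are independent exponentials, so
\[\p{\tau_{N,j}\leq N^{-\beta\sigma}\,\big|\,\Rr_N} = 1 - \exp\!\bigl(-N^{\beta(y_j-\sigma)}\bigr),\]
and summing over those $j$ with $y_j \in (a,b]$,
\[\E{M_{N,\sigma}^S(a,b)\,\big|\,\Rr_N} = \int_{(a,b]} N^{f_N(y)}\,dM_N^R(y), \qquad f_N(y) := \frac{\log\!\bigl(1 - e^{-N^{\beta(y-\sigma)}}\bigr)}{\log N}.\]
As $N\to\infty$, $f_N \to f$ pointwise with $f(y) := \beta(y-\sigma)_-$, which is continuous, so $D_f = \emptyset$. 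To apply Lemma \ref{lem:integral} with $g = r$, one verifies the local-uniformity conditions \eqref{eq:IntLemmaSupCond}--\eqref{eq:IntLemmaInfCond}: away from $\sigma$ they follow from Taylor-expanding $\log(1-e^{-x})$ for small $x$ (giving $f_N \to f$ uniformly on compact sets avoiding $\sigma$), while at $y = \sigma$ the monotonicity of $f_N$ in $y$ reduces the check to an endpoint computation yielding $\sup_{B(\sigma,\delta)} f_N \to 0$ and $\inf_{B(\sigma,\delta)} f_N \to -\beta\delta$ for each $\delta > 0$, matching the respective limits of $f$. Lemma \ref{lem:integral} then yields
\[\frac{\log\E{M_{N,\sigma}^S(a,b)\,\big|\,\Rr_N}}{\log N} \xrightarrow{\P} \sup_{y\in (a,b]}\bar{s}_\sigma(r)(y) =: C.\]

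Conditional on $\Rr_N$, the indicators $\xi_j^N := \ind_{\{\tau_{N,j}\leq N^{-\beta\sigma}\}}\ind_{\{y_j\in(a,b]\}}$ are independent Bernoullis, placing us in the framework of Lemma \ref{lem:concentration} with $\F_N = \sigma(\Rr_N)$ and $\ell_N = N$. The hypothesis $\sup_{(a,b]}\bar{s}_\sigma(r) \neq 0$ forces $C \neq 0$. If $C > 0$, then $\bar{s}_\sigma(r)$ attains a strictly positive maximum on $(a,b]$, so $\sup_{(a,b]} s_\sigma(r) = \sup_{(a,b]} \bar{s}_\sigma(r) = C$, and Lemma \ref{lem:concentration}(a) gives $\log M_{N,\sigma}^S(a,b)/\log N \xrightarrow{\P} C$. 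If $C < 0$, then $\bar{s}_\sigma(r) < 0$ throughout $(a,b]$, so $s_\sigma(r) \equiv -\infty$ there, and Lemma \ref{lem:concentration}(b) gives $M_{N,\sigma}^S(a,b) \xrightarrow{\P} 0$, hence $\log M_{N,\sigma}^S(a,b)/\log N \xrightarrow{\P} -\infty = \sup_{(a,b]} s_\sigma(r)$. Either case proves the claim.

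The only substantive difficulty is the local-uniformity check at $y = \sigma$: pointwise convergence of $f_N$ to $f$ is immediate, but uniform convergence fails on any neighbourhood of $\sigma$ since the values $N^{\beta(y-\sigma)}$ sweep through the entire transition regime between $0$ and $\infty$ as $y$ ranges over $B(\sigma,\delta)$. Monotonicity of $f_N$ in $y$, together with $0 \geq f_N(y) \geq -\beta\delta + o(1)$ on $B(\sigma, \delta)$, makes the required sup/inf computation routine.
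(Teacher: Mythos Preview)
Your proof is correct and follows essentially the same route as the paper: compute the conditional mean as a Riemann--Stieltjes integral against $M_N^R$, apply Lemma~\ref{lem:integral}, then upgrade via Lemma~\ref{lem:concentration}. The one discrepancy is your claim that uniform convergence of $f_N$ to $f$ fails near $\sigma$. In fact it holds globally: writing $u=N^{\beta(y-\sigma)}$, one has $f_N(y)-f(y)=\dfrac{\log\bigl((1-e^{-u})/\min(u,1)\bigr)}{\log N}$, and since $(1-e^{-u})/\min(u,1)\in[1-e^{-1},1]$ for all $u>0$, the difference is bounded in absolute value by $|\log(1-e^{-1})|/\log N\to 0$ uniformly in $y$. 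The paper simply notes this and invokes Remark~\ref{rmk:unifIntLemma}, avoiding your monotonicity workaround. Your argument via \eqref{eq:IntLemmaSupCond}--\eqref{eq:IntLemmaInfCond} is valid but unnecessary.
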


\begin{proof}
Throughout we let $\bar{s}_\sigma := \bar{s}_\sigma(r)$ and $s_\sigma := s_\sigma(r)$. For a particle at location $R_{N,j} = x c_N$, we have $w_{N,j}= N^{\beta x}$, and
\begin{equation}\label{eq:tauprob}
\P\left(\tau_{N,j} \leq N^{-\beta\sigma}\,|\,\sigma({\cal R}_N)\right) = 1 - \exp(-w_{N,j} N^{-\beta\sigma}) = 1 - \exp\left(-N^{\beta (x - \sigma)}\right).
\end{equation}
For all $a,b \in \R$, if we let $B_N(a,b) :=  \mathbb{E}[M^S_{N,\sigma}(a,b) | \sigma({\cal R}_N)]$, then 
\begin{align}\label{eq: EMSsig}
B_N(a,b) = \!\!\sum_{R_{N,j} \in (ac_N,bc_N]}\!\! \P\left(\tau_{N,j} \leq N^{-\beta\sigma}| \sigma({\cal R}_N)\right) = \int_{(a,b]} \left(1 - \exp\left(-N^{\beta (x - \sigma)}\right)\right) dM^R_N(x).
\end{align}
It is easy to see that
\[ \frac{\log\left(1- \exp(-N^{\beta(x - \sigma)})\right)}{\log N} \xrightarrow{N \to \infty} \beta(x-\sigma)_-\]
uniformly in $x$. 
From Remark \ref{rmk:unifIntLemma}, we may apply Lemma \ref{lem:integral}, and so for all $a,b \in \R\setminus D_r$,
\begin{equation}\label{eq:convAsigma}
B_N(a,b) \xrightarrow{\P} \sup_{x \in (a,b]} \bar{s}_\sigma(x), \text{ as }N\to\infty.
\end{equation}

Now we apply Lemma \ref{lem:concentration} with $\ell_N = N$, $a_j^N = (1-\exp(-w_{N,j}N^{-\beta\sigma}))\1_{ \{ R_{N,j} \in (ac_N, bc_n] \} }$, $\xi_{j}^N=\1_{\{\tau_{N,j}\leq N^{-\beta\sigma}\}}$, and $\F_N = \sigma({\cal R}_N)$. By \eqref{eq:convAsigma} and Lemma \ref{lem:concentration}(a), we conclude that
\[ \frac{\log M^S_{N,\sigma}(a,b)}{\log N} \xrightarrow{\P} \sup_{x \in (a,b]}\bar{s}_{\sigma}(x) = \sup_{x \in(a,b]} s_\sigma(x), \text{ as $N\to\infty$},\]
if $\sup_{x \in (a,b]}\bar{s}_\sigma(x)> 0$, and by Lemma \ref{lem:concentration}(b), we see that 
\[ M^S_{N,\sigma} \xrightarrow{\P} 0, \text{ as $N\to\infty$},\]
if $\sup_{x \in(a,b]}\bar{s}_{\sigma}(x) < 0$, which concludes the proof. 
\end{proof}

\bigskip

For all $N\in\N$, let us write
\begin{equation}\label{eq:sigma^*-d}
\sigma^\ast_N := \sigma^\ast_N(\Rr_N) = \inf\Big\{ \sigma :\: M^S_{N,\sigma}(\R) \leq  N^\gamma\Big\}
\end{equation}
as an analog to $\sigma^\ast(r)$ from \eqref{eq: sigma*}.
Then as discussed above,
$M^S_{N, \sigma^\ast_N}(a,b) = M^S_N(a,b)$. 

\begin{proof}[Proof of Lemma~\ref{lem:Sampling}]
Let $s_\sigma := s_\sigma(r)$ and $s := s(r)$. Since we assume $\sigma^*$ is the unique value for which $\sup s_{\sigma^\ast}=\gamma$, then for all $\eps > 0$ and by Lemma~\ref{prop:clocks}, 
\[ \frac{\log M^S_{N,\sigma^{*}-\eps}(-\infty,\infty)}{\log N} \xrightarrow{\mathbb P} \sup s_{\sigma^{*}-\eps}> \gamma\]
and so from the definition of $\sigma^\ast_N$, as $N \to \infty$, 
\begin{align*}
\p{\sigma^{*}-\eps > \sigma^\ast_N} &\leq \p{M^S_{N,\sigma^*-\eps}(-\infty,\infty) \leq M^S_{N}(-\infty,\infty) = N^\gamma } 
\\ &
\leq \p{\frac{\log M^S_{N,\sigma^{*}-\eps}(-\infty,\infty)}{\log N} \leq \gamma} \to 0.
\end{align*}
By a symmetric argument, $\P(\sigma^\ast_N > \sigma^\ast + \eps) \xrightarrow{N \to \infty} 0$, and so $\sigma^\ast_N \xrightarrow{\P} \sigma^{*}$ as $N\to\infty$. As a consequence, for all $a,b \in \R$, 
\begin{equation}\label{eq:inclusionprob}
\P\left(M^S_{N, \sigma^\ast+\eps}(a,b) \leq M^S_{N}(a,b) \leq M^S_{N, \sigma^\ast-\eps}(a,b)\right) \geq \p{ \sigma^\ast-\eps \leq \sigma^\ast_N \leq \sigma^\ast+\eps  } 
 \xrightarrow{N \to \infty} 1.
\end{equation}

Note that for any $x, \sigma \in \R$ and any $\eps >0$, $|\bar{s}_\sigma(x) - \bar{s}_{\sigma\pm \eps}(x)| \leq \eps$, and so it is evident that for all $a,b \in \R\setminus D_{r}$ such that $\sup_{x \in (a,b]} \bar{s}(x) \neq 0$, the function $\sigma \mapsto \sup_{x \in (a,b]} s_\sigma(x)$ is continuous around $\sigma^*$. Then applying Lemma \ref{prop:clocks} and letting $\eps \to 0$ in \eqref{eq:inclusionprob} completes the proof. 
\end{proof}

\subsection{Proof of Theorem \ref{thm:main}}\label{sec:ProofMain}

Recall that $M_N^t$ is the point measure associated with $\mathcal{Z}_N^t = (Z^t_{N,i})_{i\in[N^\gamma]}$, see (\ref{def-MN}). For $t\in\N$, we introduce $M^{R,t}_{N}$ and $M^{S,t}_{N} = M_N^{t+1}$ as the analog point measures after reproduction and selection (between times $t$ and $t+1$) as defined in Sections \ref{sec:Reproduction} and \ref{sec:Sampling}.
We also recall the definitions of the operators, $\bar{r},r$ defined in \eqref{eq: Rg} and $\bar{s},s$ defined in \eqref{eq: S}. First, we prove a more general theorem with technical conditions, which we then apply to prove Theorem~\ref{thm:main}.

\medskip

\begin{thm}\label{thm:mainGeneral}
Assume that $(M_N^0)_{N \in \N}$ has limiting log-profile $g^0\in{\mathcal D}$ and let  $(g^t)_{t\geq0}$ be the dynamics as defined in Theorem \ref{thm:main}. 
Suppose further that 
\begin{enumerate}
\item[(i)] $g^t\in{\cal D}$,
\item[(ii)] for all $t \geq 0$,
\begin{align}
\forall a,b \in \R\setminus D_{g^{t+1}}, \ \ \sup_{x \in (a,b]} \bar{s}(r^t)(x) &\neq 0, \label{eq:noZeroConds}
\end{align}
\item[(iii)]  $\sigma^t$ is the unique $\sigma$ for which $\sup s_\sigma(r^t) = \gamma$. 
\end{enumerate}
Then for all $t \in \N$, the sequences of point measures $(M_N^t)_{N \in \N}$ have limiting log-profile $g^t$.
\end{thm}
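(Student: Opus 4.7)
The proof proceeds by induction on $t$, with the base case $t=0$ being precisely the hypothesis that $(M_N^0)_{N\in\N}$ admits limiting log-profile $g^0$. For the inductive step, fix $t \geq 0$ and assume $(M_N^t)_{N\in\N}$ has limiting log-profile $g^t \in \mathcal{D}$. The passage from time $t$ to $t+1$ factors as reproduction followed by selection, so I would invoke Lemmas~\ref{lem:Reproduction} and~\ref{lem:Sampling} in sequence.

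For the reproduction substep, hypothesis (i) guarantees $g^t \in \mathcal{D}$, so Lemma~\ref{lem:Reproduction} applies verbatim and yields that the post-reproduction measures $(M_N^{R,t})_{N\in\N}$ admit the limiting log-profile $r^t := r(g^t) \in \mathcal{D}$.

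For the selection substep, I would apply Lemma~\ref{lem:Sampling} to the sequence $(M_N^{R,t})_{N\in\N}$ with $r = r^t$; hypothesis (iii) is exactly the uniqueness condition on $\sigma^\ast(r^t) = \sigma^t$ required by that lemma. Lemma~\ref{lem:Sampling} then delivers
\[
\frac{\log M_N^{t+1}(a,b)}{\log N} \xrightarrow{\mathbb{P}} \sup_{x \in (a,b]} g^{t+1}(x)
\]
at every $a, b \in \R \setminus D_{r^t}$ for which $\sup_{x \in (a,b]} \bar{s}(r^t)(x) \neq 0$. To promote this to the convergence at every $a, b \in \R \setminus D_{g^{t+1}}$ required by Definition~\ref{def: limitlog}, I would use that $D_{r^t} \cup D_{g^{t+1}}$ is countable and approximate $a, b$ from both sides by points $a^\pm, b^\pm$ lying in $\R \setminus (D_{r^t} \cup D_{g^{t+1}})$, with $a^+ \downarrow a$, $a^- \uparrow a$, $b^+ \downarrow b$, $b^- \uparrow b$. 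Hypothesis (ii) rules out the degenerate supremum on each approximating interval, so Lemma~\ref{lem:Sampling} applies to each; the deterministic sandwich $M_N^{t+1}(a^+, b^-) \leq M_N^{t+1}(a,b) \leq M_N^{t+1}(a^-, b^+)$, together with continuity of $x \mapsto \sup_{x \in (a,b]} g^{t+1}(x)$ in $(a,b)$ at the endpoints of continuity, then transfers the convergence in probability to $(a,b]$.

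The only substantive bookkeeping obstacle is aligning the discontinuity sets: Lemma~\ref{lem:Sampling}'s conclusions hold at continuity points of $r^t$, whereas the limiting log-profile at step $t+1$ must be characterized at continuity points of $g^{t+1} = \pi\circ\bar{s}(r^t)$, and the truncation $\pi$ can manufacture new discontinuities at the zero-crossings of $\bar{s}(r^t)$. Hypothesis (ii) is precisely the statement that excludes such zero-crossings from the endpoints under consideration, and with it the countable approximation above closes the remaining gap. Iterating the inductive step then yields the claim for all $t \in \N$.
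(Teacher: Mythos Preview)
Your proposal is correct and follows essentially the same approach as the paper's proof: induction on $t$, invoking Lemma~\ref{lem:Reproduction} for the reproduction step, then Lemma~\ref{lem:Sampling} for selection (using hypothesis~(iii) for the uniqueness of $\sigma^t$), and finally a countable approximation of the endpoints $a,b\in\R\setminus D_{g^{t+1}}$ by points of $\R\setminus(D_{r^t}\cup D_{g^{t+1}})$ so that hypothesis~(ii) and a sandwich argument transfer the convergence. The paper phrases the approximation with an explicit $\eps$-bound on the suprema rather than a sequential $a^\pm\to a$, $b^\pm\to b$, but this is cosmetic.
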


\begin{proof}
Let $t\in\N$. Suppose the result is true for all $0\leq n\leq t$. 
By Lemma \ref{lem:Reproduction}, $(M_N^{R,t})_{N \in \N}$ has limiting log-profile $r^t$. Let $a,b \in \R \setminus D_{g^{t+1}}$, then \eqref{eq:noZeroConds} guarantees that $\sup_{x \in (a,b]} \bar{s}(r^t)(x) \neq 0$. For any $\eps >0$, we may find $a^-,b^+\in \R\setminus D_{r^t}$ such that $a^- \leq a$, $b \leq b^+$, 
\begin{equation}\label{eq:boundSups}
\sup_{x \in (a^-, b^+]} \bar{s}(r^t)(x) \neq 0, \qquad \text{and} \qquad \sup_{x \in (a^-, b^+]} g^{t+1}(x) -\sup_{x \in (a, b]} g^{t+1}(x) \leq \eps.
\end{equation}
We assumed that $\sigma^t$ is the unique value for which $\sup s_{\sigma}(r^t) = \gamma$, and so from \eqref{eq:boundSups} and Lemma \ref{lem:Sampling}, 
\[\frac{\log M_N^{t+1}(a,b)}{\log N}\leq \frac{\log M_N^{t+1}(a^-,b^+)}{\log N} \xrightarrow{\P} \sup_{x \in (a^-, b^+]} g^{t+1}(x).\]
A symmetric argument holds for a lower bound, and letting $\eps \to 0$, we conclude that 
\[ \frac{\log M_N^{t+1}(a,b)}{\log N} \xrightarrow{\P} \sup_{x \in (a,b]}g^{t+1}(x).\]
The above holds for all $a,b \in D_{g^{t+1}}$, and so the sequence of point measures $(M_N^{t+1})_{N \in \N}$ has limiting log-profile $g^{t+1}$. Recursively applying the above argument completes the proof. 
\end{proof}

In order to prove Theorem \ref{thm:main} we will need one more technical lemma. Recall the definition of the function class $\cal C$ from Definition~\ref{def: D}.

\begin{lem}\label{prop:WellDefined}
Suppose $h$ from~\eqref{eq:defh} is concave ($\rho\geq1$).  If $g \in \mathcal{C}$ and $\sup g = \gamma$, then $\bar{r}(g), \bar{s}(r(g))$ are concave and 
\[r(g), \,\, s(r(g)) \in \mathcal{C}.\]
\end{lem}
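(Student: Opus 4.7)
The strategy is to verify each assertion in turn by standard convex-analytic means, noting that $\pi$ merely cuts off at level zero. First I would show that $\bar{r}(g)$ is concave on $\R$. Under $\rho \geq 1$ the function $h$ is concave on $\R$: on each half-line $-x^\rho$ and $-c_-|x|^\rho$ are concave for $\rho \geq 1$, and a check of the one-sided derivatives at $0$ (using $c_- \geq 0$) gives concavity across the junction. Extending $g$ by $-\infty$ off its support makes $g$ concave on $\R$; then $(x,z) \mapsto g(z) + h(x-z)$ is jointly concave, so its supremum in $z$ is concave in $x$ by the standard sup-convolution argument (given $\eps$-optimizers $z_i$ at $x_i$, the competitor $\alpha z_1 + (1-\alpha)z_2$ at $\alpha x_1 + (1-\alpha)x_2$ furnishes the needed inequality). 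As a byproduct, $\sup_x \bar{r}(g)(x) = (1-\gamma) + \sup g + \sup h = 1$.

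To deduce $r(g) \in \mathcal{C}$, observe that the super-level set $\{\bar{r}(g) \geq 0\}$ is non-empty (since $\sup \bar{r}(g) = 1 > 0$), is a closed interval by concavity, and is bounded because $g$ has bounded support while $h(y) \to -\infty$ as $|y| \to \infty$. On this interval $r(g) = \bar{r}(g)$ is concave; outside, $r(g) = -\infty$. A concave function on a bounded interval admits one-sided limits everywhere on $\R$, so $r(g) \in \mathcal{D}$, hence $r(g) \in \mathcal{C}$.

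Writing $r := r(g)$, for each $\sigma$ the function $x \mapsto \beta(x-\sigma)_- = \beta\min(x-\sigma,0)$ is concave (minimum of two affine functions), so $\bar{s}_\sigma(r)$ is concave as a sum of concave functions. The map $\sigma \mapsto \sup_x \bar{s}_\sigma(r)(x)$ is $\beta$-Lipschitz (pointwise supremum of functions $\beta$-Lipschitz in $\sigma$ uniformly in $x$) and non-increasing; it equals $\sup r = 1 > \gamma$ for $\sigma \leq L(r)$ and tends to $-\infty$ as $\sigma \to \infty$ because $\mbox{Supp}(r)$ is bounded. The intermediate value theorem then supplies a finite $\sigma^*(r)$ realizing the infimum in its definition, and $\bar{s}(r) = \bar{s}_{\sigma^*(r)}(r)$ is concave. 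Finally, $s(r(g)) \in \mathcal{C}$ by rerunning the argument of the previous paragraph on $\bar{s}(r)$: its zero-super-level set is non-empty ($\sup \bar{s}(r) = \gamma > 0$), an interval by concavity, and bounded (it is contained in $\mbox{Supp}(r)$). The only delicate step is the sup-convolution concavity together with the $-\infty$ bookkeeping; the rest is routine concavity, continuity, and intermediate-value manipulation.
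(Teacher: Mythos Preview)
Your proposal is correct and follows essentially the same route as the paper: sup-convolution of concave functions is concave, $\beta(x-\sigma)_-$ is concave, and applying $\pi$ (which the paper phrases as ``$\pi$ is non-decreasing and concave, so $\pi(\bar r)$ is concave'') preserves concavity. You supply more detail than the paper does---in particular the boundedness of the support of $r(g)$ and the finiteness of $\sigma^*(r)$ via continuity and the intermediate value theorem---which the paper either leaves to the reader or handles implicitly elsewhere; this extra care is welcome and does not change the underlying argument.
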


\begin{proof}
Set $\bar{r} := \bar{r}(g)$ and $r := r(g)$. For any $x, x' \in \R$, by using the concavity of $g$ and $h$, 
\begin{align*}
\bar{r}(g)(\alpha x  + (1-\alpha)x') &= 1 - \gamma + \sup_{z \in \R}\big\{ g(z) + h(\alpha x + (1-\alpha) x' - z)\big\} \\
&= 1 - \gamma + \sup_{z,z' \in \R} \big\{ g(\alpha z + (1-\alpha)z') + h(\alpha(x-z) + (1-\alpha)(x' - z')\big\} \\
& \geq 1- \gamma + \alpha\sup_{z \in \R}\big\{ g(z) + h(x - z)\} + (1-\alpha) \sup_{z' \in \R}\big\{ g(z') + h(x' - z')\big\} \\
&= \alpha\bar{r}(x) + (1-\alpha)\bar{r}(x'),
\end{align*}
and so $\bar{r}$ is concave.
Since $\pi$ is non-decreasing and concave, then $r = \pi(\bar{r})$ is also concave. Finally, the function $x\mapsto\beta(x - \sigma^\ast(r))_-$ is concave, and so $\bar{s}(r)(x) = r(g)(x) + \beta(x - \sigma^\ast(r))_-$ is also concave, while once more, the composition $\pi(\bar{s}(r)) = s(r)$ is also concave.  
\end{proof}

\begin{proof}[Proof of Theorem \ref{thm:main}]
By Lemma \ref{prop:WellDefined}, for all $t \geq 0$, we have that $r^t,g^t \in \mathcal{C} \subset \mathcal{D}$. To apply Theorem \ref{thm:mainGeneral}, we then only need to show that \eqref{eq:noZeroConds} holds and that there is a unique $\sigma^t$  for which $\sup s_{\sigma_t}(r^t) = \gamma$. For $t\in\N$ let $L^t:=L(g^t)$ and $U^t:=U(g^t)$ from Definition~\ref{def: D}.\\

From Lemma~\ref{prop:WellDefined}, $\bar{s}(r^t)$ and $g^{t+1}$ are concave, and are therefore continuous on the interior of their support, so $D_{g^{t+1}} = \{L^{t+1}, U^{t+1}\}$. By the concavity of $\bar{s}(r^{t})$, we also see that $\bar{s}(r^{t})(x) < 0$ for all $x \in \R \setminus [L^{t+1}, U^{t+1}]$, and since $\bar{s}(r^t) > 0$ on $(L^{t+1}, U^{t+1})$ we see that 
\begin{equation*}
a,b \in \R\setminus\{L^{t+1}, U^{t+1}\} \Longrightarrow \sup_{x \in (a,b]} \bar{s}(r^t)(x) \neq 0, 
\end{equation*}
and so \eqref{eq:noZeroConds} holds. \\

We now prove the uniqueness of $\sigma^t$. Suppose otherwise that there exists two values $\sigma_1 < \sigma_2$ such that $\sup s_{\sigma_1}(r^t) = \sup s_{\sigma_2}(r^t) = \gamma$. By the definition of the operator $s_{\sigma_1}$, 
\[\gamma \geq \sup_{x \in (\sigma_1, \infty)}s_{\sigma_1}(r^t)(x) = \sup_{x \in (\sigma_1, \infty)} r^t(x).\]
Since $r^t$ is concave and $\sup r^t = 1 > \gamma$, the above implies that $r^t$ is strictly decreasing for $x \in (\sigma_1, \infty)\cap \text{Supp}(r^t)$. Therefore, if we let $\hat{\sigma}$ be such that $\sigma_1 < \hat{\sigma} < \sigma_2$, then
\begin{equation}\label{eq:sigmaUnique1} \sup_{x \in (\hat{\sigma}, \infty)}s_{\sigma_2}(r^t)(x) \leq \sup_{x \in (\hat{\sigma}, \infty)} r^t(x) < \gamma
\end{equation}
and
\begin{equation}\label{eq:sigmaUnique2} \sup_{x \in (\sigma_1, \hat{\sigma}]}s_{\sigma_2}(r^t)(x) = \sup_{x \in (\sigma_1, \hat{\sigma}]}\pi\left(r^t(x) + \beta (x - \sigma_2)\right) \leq \sup_{x \in (\sigma_1,\hat{\sigma}]}r^t(x) + \beta(\hat{\sigma} - \sigma_2) < \gamma.
\end{equation}
Now let $x \in (-\infty, \sigma_1]$. Since $\beta(x - \sigma_2) < \beta(x - \sigma_1)$, then $s_{\sigma_2}(r^t)(x) < s_{\sigma_1}(r^t)(x) \leq \gamma$ on $\text{Supp}(s_{\sigma_2}(r^t))$, while $s_{\sigma_2}(r^t)(x) = -\infty$ otherwise. Along with \eqref{eq:sigmaUnique1} and \eqref{eq:sigmaUnique2}, we see that $\sup s_{\sigma_2}(r^t) < \gamma$, and a contradiction is reached. This completes  the proof of Theorem~\ref{thm:mainGeneral}.
\end{proof}

\subsubsection{Example with no limiting log-profile after selection}\label{sec:Examples}

\begin{ex}\label{ex:Sup0} 

We now construct a counterexample where 
${\cal S}_N$ -- the point measure after selection step -- does not admit a limiting log-profile coinciding with the prediction of Theorem \ref{thm:main}. To do so, we consider a population whose log-profile after selection has an isolated $0$, so that it becomes impossible to deduce the presence or absence of particles from the log-profile alone.  

\medskip

Let $\rho \in (0,1)$, 
and $X$ be a random variable with probability density function $f_X = c(\rho)\exp\left(-|x|^\rho\right)$ for some normalizing constant $c(\rho)$, so that $h(x) = -|x|^\rho$. We see that $c_N \approx (\ln N)^{1/\rho}$. Starting with $N^\gamma$ particles at $0$, the population after reproduction $\Rr_N$ is given by $N$ i.i.d. copies of $X$. In turn, the  limiting log-profile is given by $r(x) := \pi(1 + h(x))$.

Suppose $\beta,\gamma$ satisfy $\beta = 1 - \gamma  \in (\rho,1)$. An easy computation yields $\sigma^\ast := \sigma^\ast(r) = 1$, and there exist $a< b < 1$ such that
$$
 {\bar s}_{\sigma^*}(x) = r(g)(x) + \beta(x - \sigma)_- = 1-x^\rho + \beta(x-1) 
$$
is greater or equal to $0$ on $[a,b]\cup \{1\}$ and negative otherwise, see Figure \ref{fig:no-profile}. Since $\pi(0)=0$, 
\begin{equation}
\label{eq:supp-g}
\mbox{Supp}\left(s(r)\right) \ = \ [a,b]\cup \{1\}.
\end{equation}

Let us now show that the limiting log-profile (if there exists any) after  selection can not coincide with $s(r)$. We only provide an outline of the proof and leave the details to the reader. 
Let ${\cal S}_N$ be the point measure after 
the  selection step.
The key point is that for $\eps>0$ small enough, the random variable  
$$
\#\{ x \in{\cal S}_N  \in c_N(1-\eps,1+\eps) \} 
$$
is tight and any subsequential limit has a positive probability 
to be $0$. If the limiting r.v. is $0$, the limiting log-profile 
must be $-\infty$ at $1$, in contradiction with (\ref{eq:supp-g}).

\begin{figure}[h!]
\centering

\begin{tikzpicture}[xscale = 5, yscale = 2, domain = 1:1]
\draw[<->,thick] (-1.2,0) -- (1.35,0) node[right]{$x$};
\draw[->] (0,0) -- (0,1.2) node[above]{$y$};
\draw[dashed] (-1.2,0.6) -- (1.35, 0.6) node[right]{$\gamma$};
\draw[dashed] (-1.2, 1) -- (1.35, 1) node[right]{$1$};
\draw[dashed] (1,1) -- (1,-0.05) node[below]{$1$};
\draw (-0.0987,0.05) -- (-0.0987,-0.05) node[below]{$a$};
\draw (0.2285,0.05) -- (0.2285,-0.05) node[below]{$b$};

\draw[black, smooth, samples=500, very thick, domain = 0:1, variable = \x] plot({\x},{1-(\x)^(1/4)});
\draw[black, smooth, samples=500,very thick, domain = -1:0, variable = \x] plot({\x},{1-(-\x)^(1/4)});

\draw[orange, smooth, samples=200,very thick, domain = 0:0.2285, variable = \x] plot({\x},{1-(\x)^(1/4) + 0.4*(\x-1)});
\draw[orange, dashed, smooth,very thick, domain = 0.2285:1, variable = \x] plot({\x},{1-(\x)^(1/4) + 0.4*(\x-1)});
\draw[orange, smooth, samples=200,very thick, domain = -0.0987:0, variable = \x] plot({\x},{1-(-\x)^(1/4)+ 0.4*(\x-1)});
\draw[orange, dashed, smooth,very thick, domain = -1:-0.0987, variable = \x] plot({\x},{1-(-\x)^(1/4)+ 0.4*(\x-1)});

\node at (-0.5,0.45) {$r$};
\node at (-0.5,-0.25) {\color{orange} $\bar{s}(r)$};
\node at (1,0) {\color{orange} \large $\bullet$};

\end{tikzpicture}

\caption{After applying the operator $\bar{s}$ to $r(x) = \pi(1-|x|^{1/4})$ (in black) with $\sigma^\ast(r) = 1$, the function $\bar{s}(r)$ (in orange) has a local maximum $\bar{s}(r)(1) = 0$, and $s(r)$ (in solid orange) is not the limiting log-profile of the process after selection.}
\label{fig:no-profile}
\end{figure}

\end{ex}

\section{Piecewise linear log-profiles}\label{sect:pwllogprof}

From now on, we consider the special class of limiting log-profiles when the function $h$ has the form 
\begin{equation}\label{eq:hCond3}
h(x) = \begin{cases}
-x & x \geq 0 \\
c_-x & x < 0,
\end{cases}
\end{equation}
where $c_- \in \R_+\cup \{+\infty\}$. 
Define the parameters 
\begin{equation}\label{eq:defK}
k := k(\beta) = \left\{\begin{array}{cc}
\lfloor\frac{1}{\beta}\rfloor & \mbox{if $\frac{1}{\beta}\notin\N$} \\ \frac{1}{\beta}-1 & \mbox{otherwise} 
\end{array}\right., \ \ \  K := K(\beta) = \left\lfloor \frac{1+c_-}{\beta}\right\rfloor + 2,
\end{equation}
where we take the convention that $K=+\infty$ if $c_- = \infty$.
 Since $c_- >0$, we have that $K \geq k+2$, while we also note that when $c_- \neq +\infty$ then 
\begin{equation}\label{eq:BoundsC-}
(K-1)\beta - 1 \, \leq \, c_- + \beta \, \leq\, K\beta - 1.
\end{equation}

Throughout the rest of the article, we will use the convention that $\R^{K+1} := \R^\N$ when $K = +\infty$. We are interested in the following class of functions:

\begin{defn}\label{def:T}
Suppose $\bm{x} \in \R^{K+1}$ is a non-increasing vector, that is,
\begin{equation}\label{eq:defTCond}
x_0 \geq x_1 \geq x_2 \geq \cdots \geq x_{K}. 
\end{equation} 
Define the function $f_{\bm{x}} \in \mathcal{C}$ by 
\begin{equation}\label{eq:defT}
f_{\bm{x}}(x) := \pi\left(\int_x^{x_0} (-c_- - \beta)\vee \left(1-\beta\sum_{j=1}^{K}\1_{\{z < x_j\}}(z)\right)dz\right).
\end{equation}
\end{defn}

\begin{figure}[h!]
\centering
\begin{tikzpicture}[scale=1.5]
\draw[<->,  thick] (0.4, 0) -- (-5.9, 0);
\draw (-0,0.1) -- (-0,-0.1) node[below]{$x_0$} ;

\draw[very thick, red] (0,0) --(-0.5,0.5) -- (-1.1,0.968) -- (-1.6, 1.248) -- (-2.2,1.452) -- (-2.6, 1.5)-- (-3.1, 1.45) -- (-3.5, 1.322)-- (-4.1,0.998) -- (-5.486,0);

\draw[dashed] (-0.5,0.5) -- (-0.5,-0.1) node[below]{$x_1$};
\draw[dashed] (-1.1,0.968) -- (-1.1,-0.1) node[below]{$x_2$};
\draw[dashed] (-1.6,1.248) -- (-1.6,-0.1) node[below]{$x_3$};
\draw[dashed] (-2.2,1.452) -- (-2.2,-0.1) node[below]{$x_4$};
\draw[dashed] (-2.6,1.5) -- (-2.6,-0.1) node[below]{$x_5$};
\draw[dashed] (-3.1,1.45) -- (-3.1,-0.1) node[below]{$x_6$};
\draw[dashed] (-3.5,1.322) -- (-3.5,-0.1) node[below]{$x_7$};
\draw[dashed] (-4.1,0.998) -- (-4.1,-0.1) node[below]{$x_8$};

\node at (-5.5,0.75) {\Large $f_{\bm{x}}:$};

\end{tikzpicture}
\begin{tikzpicture}[scale=1.5]
\draw[<->,  thick] (0.4, 0) -- (-5.9, 0);
\draw (-0,0.1) -- (-0,-0.1) node[below]{$x_0$} ;

\draw[very thick, red] (0,0) --(-0.5,0.5) -- (-1.1,0.968) -- (-1.6, 1.248) -- (-2.2,1.452) -- (-2.6, 1.5)-- (-3.1, 1.45) -- (-3.5, 1.322)-- (-4.1, 0.998)-- (-4.5,0.694) -- (-5.1, 0.106) -- (-5.18833,0);

\draw[dashed] (-0.5,0.5) -- (-0.5,-0.1) node[below]{$x_1$};
\draw[dashed] (-1.1,0.968) -- (-1.1,-0.1) node[below]{$x_2$};
\draw[dashed] (-1.6,1.248) -- (-1.6,-0.1) node[below]{$x_3$};
\draw[dashed] (-2.2,1.452) -- (-2.2,-0.1) node[below]{$x_4$};
\draw[dashed] (-2.6,1.5) -- (-2.6,-0.1) node[below]{$x_5$};
\draw[dashed] (-3.1,1.45) -- (-3.1,-0.1) node[below]{$x_6$};
\draw[dashed] (-3.5,1.322) -- (-3.5,-0.1) node[below]{$x_7$};
\draw[dashed] (-4.1,0.998) -- (-4.1,-0.1) node[below]{$x_8$};
\draw[dashed] (-4.5,0.694) -- (-4.5,-0.1) node[below]{$x_9$};
\draw[dashed] (-5.1,0.106) -- (-5.1,-0.1) node[below]{$x_{10}$};
\draw[dashed] (-5.5,0.1) -- (-5.5,-0.1) node[below] {$x_{11}$};

\node at (-5.5,0.75) {\Large $f_{\bm{x}}:$};

\end{tikzpicture}
\caption{Two examples of functions $f_{\bm{x}}$. In the first plot, $c_- = 0.5$ and so $k=4, K=8$, and the function $f_{\bm{x}}$ has slopes $-1, -(1-\beta), \ldots, -(1-7\beta)$ and $c_- + \beta$. In the second plot, $c_- = \infty$ and so $k=4, K=\infty$, and the function $f_{\bm{x}}$ may have slope $-1, -(1-\beta), -(1-2\beta), \ldots$.}
\end{figure}
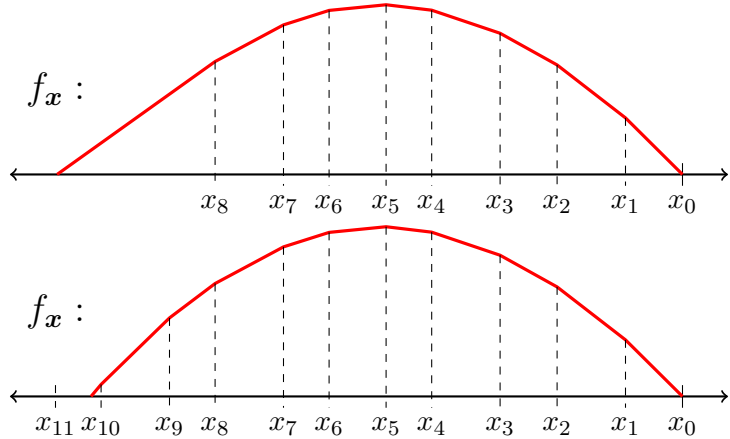

The function $f_{\bm{x}}$ consists of the non-negative parts of the linear functions with possible slopes 
\[c_-+\beta, \ \ \mbox{on $(-\infty,x_{K}]$},  \text{ and } -(1-j\beta) \ \ \mbox{on $[x_{j+1},x_j]$} \ \ \text{ for  $j \in \{0\}\cup[K-1]$} .\] 

 Because of (\ref{eq:BoundsC-}), the function $f_{\bm{x}}$ is concave on its support for all non-increasing vectors $\bm{x} \in \R^{K+1}$, and so $f_{\bm{x}} \in \mathcal{C}$ and $x_0 = U(f_{\bm{x}})$ is the upper edge of $f_{\bm{x}}$. We note from \eqref{eq:defT} that $f_{\bm{x}}$ is not necessarily defined by a unique vector $\bm{x}$, though as we state in the following lemma, we may determine when two vectors $\bm{x}$ and $\bm{x}'$ define the same function $f_{\bm{x}} = f_{\bm{x}'}$.

\begin{lem}\label{lem:EqClassf}
Let $\bm{x} \in \R^{K+1}$ be non-increasing. For any non-increasing $\bm{x}' \in \R^{K+1}$,\, $f_{\bm{x}'} = f_{\bm{x}}$ if and only if the following are satisfied:
\[ 
\begin{array}{cc}
x_j' = x_j & \text{ for all } x_j > L(f_{\bm{x}}), \\
x_j' \leq L(f_{\bm{x}}) & \text{ for all } x_j \leq L(f_{\bm{x}}).
\end{array}
\]
\end{lem}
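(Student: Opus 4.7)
The plan is to read each $x_j$ with $x_j > L(f_{\bm{x}})$ off the profile $f_{\bm{x}}$ as a breakpoint of its piecewise linear structure, and to show that the remaining entries of $\bm{x}$ are invisible as long as they lie at or below the lower edge. To this end, introduce
\[
N_{\bm{x}}(z) := \#\{j \in [K] : x_j > z\}, \qquad G_{\bm{x}}(z) := (-c_- - \beta) \vee (1 - \beta N_{\bm{x}}(z)),
\]
so that $f_{\bm{x}}(x) = \pi\bigl(\int_x^{x_0} G_{\bm{x}}(z)\,dz\bigr)$. Writing $L := L(f_{\bm{x}})$, the function $f_{\bm{x}}$ is concave piecewise linear on its support $[L, x_0]$, with $f_{\bm{x}}'(z) = -G_{\bm{x}}(z)$ almost everywhere, and is $-\infty$ elsewhere. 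The integer-valued step function $N_{\bm{x}}$ jumps only at the distinct values taken by $x_1, \dots, x_K$.

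For the $(\Leftarrow)$ direction I would verify $G_{\bm{x}} \equiv G_{\bm{x}'}$ on $(L, x_0]$: for such $z$, each indicator $\1_{z < x_j}$ coincides with $\1_{z < x_j'}$, either because $x_j > L$ (forcing $x_j = x_j'$ by hypothesis) or because $x_j \leq L$ (making both indicators zero since $z > L$ dominates $x_j$ and $x_j'$). Integrating from $x_0 = x_0'$ downwards gives $f_{\bm{x}} \equiv f_{\bm{x}'}$ on $[L, x_0]$. For $x < L$, I would observe that the right derivative $s_L := -G_{\bm{x}}(L^+)$ is strictly positive (concavity plus $f_{\bm{x}}(L) = 0$ with $f_{\bm{x}} > 0$ just to the right), and because $G$ is non-increasing in $z$, $G_{\bm{x}'}(z) \leq -s_L < 0$ for every $z < L$. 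Combined with $\int_L^{x_0} G_{\bm{x}'}(z)\,dz = f_{\bm{x}}(L) = 0$, this yields $\int_x^{x_0} G_{\bm{x}'}(z)\,dz < 0$, so $f_{\bm{x}'}(x) = -\infty$.

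For the $(\Rightarrow)$ direction, equality of $f_{\bm{x}}$ and $f_{\bm{x}'}$ immediately forces $x_0 = x_0'$ and identical lower edges. Differentiating on the interior of the common support yields $G_{\bm{x}} = G_{\bm{x}'}$ almost everywhere on $(L, x_0)$. In the generic parameter range the map $n \mapsto (-c_- - \beta) \vee (1 - \beta n)$ is injective on $\{0, 1, \dots, K\}$, so $N_{\bm{x}} \equiv N_{\bm{x}'}$ on $(L, x_0)$, and since both are integer-valued step functions this equality is pointwise. Matching jump locations together with multiplicities identifies the multisets $\{x_j : x_j > L\}$ and $\{x_j' : x_j' > L\}$; the non-increasing ordering of $\bm{x}$ and $\bm{x}'$ then gives $x_j = x_j'$ for every index $j$ with $x_j > L$, while a counting argument forces $x_j' \leq L$ for the remaining indices.

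The main obstacle I anticipate is the degenerate case $(1 + c_-)/\beta \in \N$, where $c_- + \beta = (K-1)\beta - 1$: the cap segment and the last interior segment then share a common slope and merge, so the position of $x_K$ is not recoverable from $f_{\bm{x}}$ alone and the map $n \mapsto G$ fails to be injective on $\{K-1, K\}$. I would dispatch this either by restricting attention to the generic parameter set (in which the argument above is clean and conclusive) or by a continuity argument perturbing $c_-$ slightly off the degenerate locus and passing to the limit; alternatively, one can argue that in the degenerate setting the ambiguous entry $x_K$ may be absorbed into the second clause of the statement with the convention that any $x_K > L$ in the cap segment is identified with $L$.
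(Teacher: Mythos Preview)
The paper's proof is a single sentence: ``The result follows immediately from the definition of $f_{\bm{x}}$ and $f_{\bm{x}'}$ in \eqref{eq:defT}.'' Your proposal is a careful unpacking of what ``immediately'' should mean: encode the profile through the slope function $G_{\bm{x}}$, match integrands on $(L,x_0]$ for the ``if'' direction, and read the breakpoints off the jump set of $G_{\bm{x}}$ for the ``only if'' direction. This is correct and exactly the natural elaboration.

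Your worry about the degenerate locus $(1+c_-)/\beta \in \N$ is well founded: there the segment $[x_K, x_{K-1}]$ and the cap $(-\infty, x_K]$ share slope $c_-+\beta$, so $x_K$ is genuinely not recoverable from $f_{\bm x}$ and the ``only if'' direction as literally stated can fail. The paper does not flag this. However, every downstream use of the lemma---\eqref{eq:Samexk+1} and the proof of Lemma~\ref{lem:operators}---either invokes only the ``if'' direction or concerns only coordinates with index $j \le k+1 < K$, so the issue is cosmetic. Of your proposed fixes, simply restricting to generic parameters is cleanest; the continuity workaround is overkill for a bookkeeping lemma.

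Two small remarks on your write-up. First, ``$G$ is non-increasing in $z$'' should read non-decreasing (as $z$ grows, $N_{\bm{x}}(z)$ drops); your conclusion $G_{\bm{x}'}(z)\le -s_L$ for $z<L$ is nonetheless correct. Second, the claim $s_L>0$ silently excludes the trivial profile $f_{\bm x}\equiv 0$ on its support (which can occur when $1/\beta\in\N$); this never arises once $\sup f_{\bm x}=\gamma>0$, which is the only regime the paper cares about.
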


\begin{proof}
The result follows immediately from the definition of $f_{\bm{x}}$ and $f_{\bm{x}'}$ in \eqref{eq:defT}.
\end{proof}

\begin{rmk}\label{rmk:supf=xk+1}
 The definition of $k$ entails that the function $f_{\bm{x}}$ is decreasing for $x > x_{k+1}$ and non-decreasing on $x \leq x_{k+1}$, and so 
\begin{equation}\label{eq:TSup}
\sup f_{\bm{x}}(x) = f_{\bm{x}}(x_{k+1}) = \sum_{j=0}^k(1-j\beta)(x_j - x_{j+1}).
\end{equation}
Therefore $L(f_{\bm{x}}) \leq x_{k+1}$ and so by Lemma \ref{lem:EqClassf},
\begin{equation}\label{eq:Samexk+1}
f_{\bm{x}} = f_{\bm{x}'}\,\,\, \Longrightarrow \,\,\, x_0=x_0',\,\, x_1=x_1',\,\, \ldots,\,\, x_{k+1}=x_{k+1}'.
\end{equation}
\end{rmk}

In Section \ref{sec:Toperators}, we introduce a vector operator $\psi_\sigma$ and prove that under some conditions on $\sigma$, we may translate the operator $s_\sigma \circ r$ acting on $f_{\bm{x}}$ as a transformation $\bm{x} \mapsto \psi_{\sigma}(\bm{x})$ so that $s_\sigma \circ r (f_{\bm{x}}) = f_{\psi_\sigma}(\bm{x})$. The main result of this section is Theorem \ref{thm:pwl}, which is contained in Section \ref{sec:pwl}. We show that for any $g \in \mathcal{D}$, after finitely many applications of $s \circ r$, the resulting function is a piecewise linear function belonging to the following class of functions:

\begin{defn}\label{def:Tgamma}
A function $g$ belongs to $\mathcal{T}$ if and only if $g = f_{\bm{x}}$ for some non-increasing vector $\bm{x} \in \R^{K+1}$ and $\sup g = \gamma$.
\end{defn}

We will assume throughout this section that $g^0\in{\cal D}$ with $\sup g^0=\gamma$. As in the dynamics of Theorem \ref{thm:main} we recursively define for all $t \geq 0$,
\begin{equation}\label{eq:sigmatDef}
\sigma^t:= \sigma^\ast(r(g^t)), \qquad g^{t+1} := s \circ r (g^t) = s_{\sigma^t} \circ r (g^t),
\end{equation}
and let $L^t := L(g^t), \, U^t := U(g^t)$ be the lower and upper edges of $g^t$.

\begin{rmk}
    A careful reading of the proofs in this section will reveal that our arguments are valid for a much broader class of functions than ${\cal D}$. Namely, it is enough to assume that $g^0: \R \to \R_+\cup\{-\infty\}$ have bounded support with $\sup g^0 = \gamma$.
\end{rmk}

\subsection{Operators on piecewise linear log-profiles}\label{sec:Toperators}

Starting from $f_{\bm{x}}$ for non-increasing $\bm{x} \in \R^{K+1}$, we show that $r(f_{\bm{x}})$ is a piecewise linear function, and that for a range of values of $\sigma$, $s_\sigma \circ r (f_{\bm{x}}) = f_{\bm{y}}$ for some non-increasing vector $\bm{y} \in \R^{K+1}$. We evaluate how the operator $s_\sigma \circ r$ acts on $f_{\bm{x}}$ by evaluating the affects on the vector $\bm{x}$. More precisely, we introduce in Lemma \ref{lem:operators} below a vector operator $\psi_{\sigma}$ and prove that under some conditions on $\sigma$, $s_\sigma \circ r(f_{\bm{x}}) = f_{\psi_\sigma(\bm{x})}$.

\bigskip

We start with a description of how the operator $r$ behaves when $h$ satisfies \eqref{eq:hCond3}. For any $g \in \mathcal{D}$, we bound $r(g)$ from above and show that $r(g)$ is a line with slope $-1$ on the interval $[U(g), U(r(g))]$. We also provide a direct computation for $r(f_{\bm{x}})$. 

\begin{lem}\label{lem:repLin}
For $g \in \mathcal{D}$, define $\omega(g):= \sup_{z \in \R}(g(z) + z)$. The following hold:
\begin{itemize}
\item[(i)] For all $x \in \R$, 
\[ r(g)(x) \leq \pi\left(1 -\gamma + \omega(g) - x\right),\]
with equality for all $x \geq U(g)$, that is, $r(g)$ is a line with slope $-1$ on $[U(g), U(r(g))]$.
\item[(ii)] For $\bm{x} \in \R^{K+1}$ non-increasing, let $L' := L(f_{\bm{x}})\vee x_{K-1}$ when $K\in \N$ and $L' = L(f_{\bm{x}})$ when $K = \infty$. Then $\omega(f_{\bm{x}}) = U(f_{\bm{x}}) = x_0$ and \begin{equation}\label{eq:rForT} 
r(f_{\bm{x}})(x) = \begin{cases}
\pi(1-\gamma - (x-x_0)) & x > x_0,\\
1-\gamma + f_{\bm{x}}(x) & x \in [L', x_0],\\
\pi\left(1 - \gamma + f_{\bm{x}}(L') - c_-(L' - x)\right) & x < L'.
\end{cases}
\end{equation}
\end{itemize}
\end{lem}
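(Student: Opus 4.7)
My plan is to reduce to the pointwise bound $h(u) \leq -u$ valid for all $u \in \R$: for $u \geq 0$ this is equality, and for $u < 0$ it reads $c_- u \leq -u$, which holds because $c_- \geq 0$. Applying this with $u = x - z$ inside the supremum gives $r(g)(x) \leq \pi(1 - \gamma + \omega(g) - x)$ at once. For the equality claim when $x \geq U(g)$, I would note that any near-maximizer $z$ of $g(z) + z$ lies in $\mbox{Supp}(g) \subseteq (-\infty, U(g)]$ and hence satisfies $x - z \geq 0$, so $h(x - z) = -(x - z)$ and the bound is attained along a maximizing sequence.

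\textbf{For part (ii).} The main idea is a direct concavity computation for the inner supremum $\sup_z F_x(z)$, where $F_x(z) := f_{\bm{x}}(z) + h(x - z)$. Since $F_x$ is a sum of two concave functions, it is itself concave in $z$, and it suffices to locate where its one-sided derivatives change sign. On each linear piece $(x_{j+1}, x_j)$ of $f_{\bm{x}}$ the slope is $-(1 - j\beta)$, so
\[ F_x'(z) = j\beta \ \text{ for } z < x, \qquad F_x'(z) = j\beta - 1 - c_- \ \text{ for } z > x, \]
while on the segment below $x_K$ where $f_{\bm{x}}$ has slope $c_- + \beta$, these become $c_- + \beta + 1$ and $\beta$, both strictly positive. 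The left-side slopes ($z < x$) are therefore always non-negative; since they do not involve $x$, the function $z \mapsto f_{\bm{x}}(z) + z$ is non-decreasing on $\mbox{Supp}(f_{\bm{x}})$, yielding $\omega(f_{\bm{x}}) = f_{\bm{x}}(x_0) + x_0 = x_0 = U(f_{\bm{x}})$ and, via part (i), the $x > x_0$ line of \eqref{eq:rForT}.

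For the remaining two cases, I use the definition \eqref{eq:defK} of $K$ together with the bounds \eqref{eq:BoundsC-} to show that $j\beta - 1 - c_- \leq 0$ precisely when $j \leq K - 2$. Combined with the positivity of $\beta$ on the segment below $x_K$, this tells me that for $z > x$ the function $F_x$ is non-decreasing on $(-\infty, x_{K-1}) \cap \mbox{Supp}(f_{\bm{x}})$ and non-increasing on $(x_{K-1}, x_0)$. The global maximizer of $F_x$ on $\mbox{Supp}(f_{\bm{x}})$ is therefore $z_\star = x \vee L'$, with $L' = L(f_{\bm{x}}) \vee x_{K-1}$. Plugging $z_\star$ in gives the remaining two lines of \eqref{eq:rForT}: for $x \in [L', x_0]$, $z_\star = x$ and $r(f_{\bm{x}})(x) = 1 - \gamma + f_{\bm{x}}(x)$ (the $\pi$ is redundant as $f_{\bm{x}}(x) \geq 0$); for $x < L'$, $z_\star = L'$ and $r(f_{\bm{x}})(x) = \pi(1 - \gamma + f_{\bm{x}}(L') - c_-(L' - x))$.

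The whole argument is essentially slope bookkeeping; the only nontrivial input is the equivalence $j\beta - 1 - c_- \leq 0 \Leftrightarrow j \leq K - 2$, which pins down $L'$ as $x_{K-1}$ when $K$ is finite. The case $K = +\infty$ (equivalently $c_- = +\infty$) will need a brief separate check: $h(x - z) = -\infty$ for $z > x$ forces the supremum onto $z \leq x$, and the bottom-region formula degenerates to $r(f_{\bm{x}})(x) = -\infty$ for $x < L' = L(f_{\bm{x}})$, which matches the stated formula.
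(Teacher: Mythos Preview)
Your proposal is correct and follows essentially the same approach as the paper. Both proofs reduce part (i) to the pointwise bound $h(u)\le -u$ and then, for part (ii), locate the maximizer of the concave function $z\mapsto f_{\bm{x}}(z)+h(x-z)$ by slope comparison. The paper phrases this via a tangent-line inequality $f(y)\le f(x)-(1-j\beta)(y-x)$ together with $-1\le -(1-j\beta)\le c_-$, while you compute $F_x'(z)$ piecewise and use the equivalence $j\beta-1-c_-\le 0\Leftrightarrow j\le K-2$; these are two ways of writing the same monotonicity argument, and your treatment of the $K=\infty$ case is handled just as in the paper.
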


\begin{proof}
Let $U=U(g)$ and $L=L(g)$. Note that $h(x-z) \leq z-x$ for all $z,x \in \R$, and so immediately
\begin{equation}\label{eq:rIneq} 1 - \gamma + \sup_{z \in \R}(g(z) + h(x-z)) \leq 1- \gamma + \sup_{z \in \R}(g(z) +z - x) = 1 - \gamma + \omega(g) - x.
\end{equation}
Applying $\pi$ to both sides proves the inequality of (i). For the equality, since $g(z) = -\infty$ for all $z > U$, then $\sup_{z \in \R}(g(z) + h(x-z)) = \sup_{z \in \R}(g(z) + z-x)$ for all $x \geq U$. Thus for all $x \geq U$, we replace the inequality of \eqref{eq:rIneq} with equality, and applying $\pi$ to both sides once more completes the proof of (i).  \\

For any non-increasing sequence $\bm{x} \in \R^{K+1}$, the slopes of $f_{\bm{x}}$ are all at least $-1$. As a consequence, we see that $\omega(f_{\bm{x}}) = U(f_{\bm{x}}) = x_0$, and the first case of \eqref{eq:rForT} follows from (i). Let $f:=f_{\bm{x}}$ and suppose $x \in [L', x_0]$. If $x \in [x_{j+1}, x_j]$ for $j=0,1,\ldots,K-2$, then by the definition of $f_{\bm{x}}$, 
\[f(y) \leq f(x) -(1-j\beta)(y-x) \text{ for all } y \in \R,\]
and since $-1 \leq -(1-j\beta) \leq c_-$ for these choices of $j$, $f(z) + h(x-z)$
reaches its maximum at $x$ and 
\[ r(f)(x) = 1 - \gamma + \sup_{z \in \R}(f(z) + h(x-z)) = 1 - \gamma + f(x).\]
Next for $x < L'$ and recalling \eqref{eq:BoundsC-}, we see that $f(x) \leq f(L') -c_-(L' - x)$, and so 
\[ r(f)(x) = \pi\left(1-\gamma + \sup_{z \in \R}(f(z) + h(x-z))\right) = \pi\left(1-\gamma + f(L') - c_-(L' - x)\right),\]
completing the proof.
\end{proof}

\medskip

We now focus on introducing the vector operator $\psi_\sigma$ that satisfies $s_\sigma \circ r(f_{\bm{x}}) = f_{\psi_\sigma(\bm{x})}$. We have already seen above in Lemma \ref{lem:repLin} that $r(f_{\bm{x}})$ is once more a piecewise linear function, while the addition of $\beta(x - \sigma)_-$ changes the slopes of $r(f_{\bm{x}})$ by $\beta$ for all $x < \sigma$ and leaves the slopes unchanged for $x \geq \sigma$. Therefore, defining $\psi_\sigma$ is a matter of finding in which linear interval of $r(f_{\bm{x}})$ lies $\sigma$, or if $\sigma$ lies outside the support of $r(f_{\bm{x}})$. 

Defining $\psi_\sigma$ for all possible $\sigma$ would be quite cumbersome. We therefore restrict ourselves to values of $\sigma$ that we will encounter in later proofs. For $g \in \mathcal{D}$, define
\begin{equation}\label{eq:sigmaRange}
a_{-}(g):=L(g) + \frac{1-\gamma}{1\wedge \beta} \qquad \text{and} \qquad
a_{+}(g):= \omega(g) - \gamma + \frac{1-\gamma}{1\wedge \beta}.
\end{equation} 
The values above are not arbitrary, indeed we will see later (in Lemma \ref{lem:sigmatBounds}) that $a_-(g^t) \leq \sigma^t \leq a_+(g^t)$ for all $t \geq 0$. 

For a non-increasing vector $\bm{x} \in \R^{K+1}$, define
\begin{equation}\label{eq:rho}
\rho(\bm{x}) \in \R^{K+1} := \begin{cases}
(x_0 + 1 -\gamma, x_1, \ldots, x_{K-1}, x_{K-1}) & K \in \N \\
(x_0 + 1 - \gamma, x_1, x_2,  \ldots) & K = +\infty.
\end{cases}
\end{equation}
The vector $\rho(\bm{x})$ can be seen as a description of $r(f_{\bm{x}})$. From \eqref{eq:rForT}, we see that $r(f_{\bm{x}})$ may have a linear section with slope $c_-$, and so $r(f_{\bm{x}})$ may not necessarily be expressed as $f_{\bm{y}}$ for some non-increasing vector $\bm{y}$ (as defined in Definition \ref{def:T}). However, it is true that the linear intervals of $r(\bm{x})$ are bounded by the values $L\vee \rho(\bm{x})_j$. Thus, as discussed above, the different cases of $\psi_\sigma$ depend on $\sigma$ and $\rho(\bm{x})$. 

We refer the reader to Figure \ref{fig:operators} for a visual aid in understanding the definition of $\psi_\sigma$ below. Lemma \ref{lem:operators} may be proved by a case by case analysis for the different values of $\sigma$, though by examining the integral definition of $f_{\bm{x}}$ in \eqref{eq:defT}, the proof becomes somewhat more straightforward.

\begin{figure}[h!]
\centering

\begin{tikzpicture}[xscale = 3.5, yscale = 2.5, domain=1:1]
\draw[<->, thick] (-3,0) -- (0.75,0);

\draw[very thick, red] (0,0) -- (-2/10,2/10) -- (-5/10,41/100) -- (-9/10,57/100) -- (-12/10,6/10)-- (-16/10,52/100) -- (-19/10,37/100)-- (-21/10,21/100)-- (-23/10, 0);

\node at (-2.3,0.25) {\color{red} $f_{\bm{x}}$};

\draw (0,0) -- (0,-0.05) node[below]{\color{red} $x_0$};
\draw (-2/10,0) -- (-2/10,-0.05) node[below]{\color{red} $x_1$};
\draw (-5/10,0) -- (-5/10,-0.05) node[below]{\color{red} $x_2$};
\draw (-9/10,0) -- (-9/10,-0.05) node[below]{\color{red} $x_3$};
\draw (-12/10,0) -- (-12/10,-0.05) node[below]{\color{red} $x_4$};
\draw (-16/10,0) -- (-16/10,-0.05) node[below]{\color{red} $x_5$};
\draw (-19/10,0) -- (-19/10,-0.05) node[below]{\color{red} $x_6$};
\draw (-21/10,0) -- (-21/10,-0.05) node[below]{\color{red} $x_7$};

\draw[very thick, black] (0.4,0) -- (-2/10,6/10) -- (-5/10,81/100) -- (-9/10,97/100) -- (-12/10,10/10)-- (-16/10,92/100) -- (-19/10,77/100)-- (-2.92666,0);

\node at (-2.5,0.6) {$r(f_{\bm{x}})$};

\draw (0.4,0) -- (0.4,-0.15);
\node at (0.4,-0.25) {$\rho_0$};
\node at (-2/10,-0.25) {$\rho_1$};
\node at (-5/10,-0.25) {$\rho_2$};
\node at (-9/10,-0.25) {$\rho_3$};
\node at (-12/10,-0.25) {$\rho_4$};
\node at (-16/10,-0.25) {$\rho_5$};
\node at (-19/10,-0.25) {$\rho_7,\rho_6$};


\draw (0.6,0) -- (0.6,-0.05) node[below]{\color{orange} $\sigma_1$};

\draw[very thick,orange] (0.314,0.0) -- (-2/10,34/100) -- (-5/10,48/100) -- (-9/10,51/100) -- (-12/10,45/100) -- (-16/10,25/100)-- (-19/10,1/100) -- (-1.91,0);

\node at (-1.2,0.2) {\small \color{orange} $s_{\sigma_1}(r(f_{\bm{x}})) = f_{\bm{y}}$};

\draw (0.314,0) -- (0.314,-0.3);
\node at (0.285,-0.4) {\color{orange} $y_1\!,\!y_0$};
\node at (-2/10,-0.4) {\color{orange} $y_2$};
\node at (-5/10,-0.4) {\color{orange} $y_3$};
\node at (-9/10,-0.4) {\color{orange} $y_4$};
\node at (-12/10,-0.4) {\color{orange} $y_5$};
\node at (-16/10,-0.4) {\color{orange} $y_6$};
\node at (-19/10,-0.4) {\color{orange} $y_7$};

\node at (-2.5,-0.1) {\color{red} $\bm{x}:$};
\node at (-2.5,-0.25) {$\rho(\bm{x}):$};
\node at (-2.5,-0.4) {\color{orange} $\bm{y}\!=\!\psi_{\sigma_1}(\bm{x}):$};

\end{tikzpicture}

\medskip

\begin{tikzpicture}[xscale = 3.5, yscale = 2.5, domain=1:1]
\draw[<->, thick] (-3,0) -- (0.75,0);

\draw[very thick, red] (0,0) -- (-2/10,2/10) -- (-5/10,41/100) -- (-9/10,57/100) -- (-12/10,6/10)-- (-16/10,52/100) -- (-19/10,37/100)-- (-21/10,21/100)-- (-23/10, 0);

\node at (-2.3,0.25) {\color{red} $f_{\bm{x}}$};

\draw (0,0) -- (0,-0.05) node[below]{\color{red} $x_0$};
\draw (-2/10,0) -- (-2/10,-0.05) node[below]{\color{red} $x_1$};
\draw (-5/10,0) -- (-5/10,-0.05) node[below]{\color{red} $x_2$};
\draw (-9/10,0) -- (-9/10,-0.05) node[below]{\color{red} $x_3$};
\draw (-12/10,0) -- (-12/10,-0.05) node[below]{\color{red} $x_4$};
\draw (-16/10,0) -- (-16/10,-0.05) node[below]{\color{red} $x_5$};
\draw (-19/10,0) -- (-19/10,-0.05) node[below]{\color{red} $x_6$};
\draw (-21/10,0) -- (-21/10,-0.05) node[below]{\color{red} $x_7$};

\draw[very thick, black] (0.4,0) -- (-2/10,6/10) -- (-5/10,81/100) -- (-9/10,97/100) -- (-12/10,10/10)-- (-16/10,92/100) -- (-19/10,77/100)-- (-2.92666,0);

\node at (-2.5,0.6) {$r(f_{\bm{x}})$};

\draw (0.4,0) -- (0.4,-0.15);
\node at (0.4,-0.25) {$\rho_0$};
\node at (-2/10,-0.25) {$\rho_1$};
\node at (-5/10,-0.25) {$\rho_2$};
\node at (-9/10,-0.25) {$\rho_3$};
\node at (-12/10,-0.25) {$\rho_4$};
\node at (-16/10,-0.25) {$\rho_5$};
\node at (-19/10,-0.25) {$\rho_7,\rho_6$};

\draw (0.3,0.05) -- (0.3,-0.05); 
\node at (0.3,-0.125) {\color{blue} $\sigma^*$};
\draw (0.3,-0.2) -- (0.3,-0.3);

\draw[very thick,blue] (0.4,0) -- (0.3,0.1) -- (-2/10,45/100) -- (-5/10,57/100) -- (-9/10,60/100) -- (-12/10,54/100) -- (-16/10,34/100)-- (-19/10,1/10) -- (-1.9952,0);

\node at (-1.3,0.2) {\small \color{blue} $s(r(f_{\bm{x}})) = f_{\bm{y}}$};

\node at (0.425,-0.4) {\color{blue} $y_0$};
\node at (0.275,-0.4) {\color{blue} $y_1$};
\node at (-2/10,-0.4) {\color{blue} $y_2$};
\node at (-5/10,-0.4) {\color{blue} $y_3$};
\node at (-9/10,-0.4) {\color{blue} $y_4$};
\node at (-12/10,-0.4) {\color{blue} $y_5$};
\node at (-16/10,-0.4) {\color{blue} $y_6$};
\node at (-19/10,-0.4) {\color{blue} $y_7$};

\node at (-2.5,-0.1) {\color{red} $\bm{x}:$};
\node at (-2.5,-0.25) {$\rho(\bm{x}):$};
\node at (-2.5,-0.4) {\color{blue} $\bm{y}\!=\!\psi_{\sigma^*}(\bm{x}):$};

\end{tikzpicture}

\medskip

\begin{tikzpicture}[xscale = 3.5, yscale = 2.5, domain=1:1]
\draw[<->, thick] (-3,0) -- (0.75,0);

\draw[very thick, red] (0,0) -- (-2/10,2/10) -- (-5/10,41/100) -- (-9/10,57/100) -- (-12/10,6/10)-- (-16/10,52/100) -- (-19/10,37/100)-- (-21/10,21/100)-- (-23/10, 0);

\node at (-2.3,0.25) {\color{red} $f_{\bm{x}}$};

\draw (0,0) -- (0,-0.05) node[below]{\color{red} $x_0$};
\draw (-2/10,0) -- (-2/10,-0.05) node[below]{\color{red} $x_1$};
\draw (-5/10,0) -- (-5/10,-0.05) node[below]{\color{red} $x_2$};
\draw (-9/10,0) -- (-9/10,-0.05) node[below]{\color{red} $x_3$};
\draw (-12/10,0) -- (-12/10,-0.05) node[below]{\color{red} $x_4$};
\draw (-16/10,0) -- (-16/10,-0.05) node[below]{\color{red} $x_5$};
\draw (-19/10,0) -- (-19/10,-0.05) node[below]{\color{red} $x_6$};
\draw (-21/10,0) -- (-21/10,-0.05) node[below]{\color{red} $x_7$};

\draw[very thick, black] (0.4,0) -- (-2/10,6/10) -- (-5/10,81/100) -- (-9/10,97/100) -- (-12/10,10/10)-- (-16/10,92/100) -- (-19/10,77/100)-- (-2.92666,0);

\node at (-2.5,0.6) {$r(f_{\bm{x}})$};

\draw (0.4,0) -- (0.4,-0.15);
\node at (0.4,-0.25) {$\rho_0$};
\node at (-2/10,-0.25) {$\rho_1$};
\node at (-5/10,-0.25) {$\rho_2$};
\node at (-9/10,-0.25) {$\rho_3$};
\node at (-12/10,-0.25) {$\rho_4$};
\node at (-16/10,-0.25) {$\rho_5$};
\node at (-19/10,-0.25) {$\rho_7,\rho_6$};


\draw (-0.7,0) -- (-0.7,-0.05) node[below] {\color{violet} $\sigma_2$};
\draw (-0.7,-0.2) -- (-0.7,-0.3);
\draw[very thick,violet] (0.4,0) -- (-2/10,6/10) -- (-5/10,81/100)--(-7/10,89/100) -- (-9/10,90/100) -- (-12/10,84/100) -- (-16/10,64/100)-- (-19/10,40/100) -- (-2.281,0);

\node at (-0.9,0.75) {\small \color{violet} $s_{\sigma_2}(r(f_{\bm{x}})) = f_{\bm{y}}$};

\node at (0.4,-0.4) {\color{violet} $y_0$};
\node at (-2/10,-0.4) {\color{violet} $y_1$};
\node at (-5/10,-0.4) {\color{violet} $y_2$};
\node at (-7/10,-0.4) {\color{violet} $y_3$};
\node at (-9/10,-0.4) {\color{violet} $y_4$};
\node at (-12/10,-0.4) {\color{violet} $y_5$};
\node at (-16/10,-0.4) {\color{violet} $y_6$};
\node at (-19/10,-0.4) {\color{violet} $y_7$};

\node at (-2.5,-0.1) {\color{red} $\bm{x}:$};
\node at (-2.5,-0.25) {$\rho(\bm{x}):$};
\node at (-2.5,-0.4) {\color{violet} $\bm{y}\!=\!\psi_{\sigma_2}(\bm{x}):$};

\end{tikzpicture}

\caption{Let $\beta = 0.3$, $\gamma = 0.6$, and $c_- = 0.75$, and so $k=4$ and $K=7$.
The functions $f_{\bm{x}}$, $r(f_{\bm{x}})$ and $s_\sigma(r(f_{\bm{x}})) = f_{\bm{y}}$ are illustrated along with the positions of the entries of the vectors $\bm{x}$, $\rho(\bm{x})$, and $\bm{y} = \psi_{\sigma}(\bm{x})$ for $\sigma \in \{\sigma_1, \sigma^\ast, \sigma_2\}$, where $\sigma^\ast := \sigma^\ast(r(f_{\bm{x}}))$. 
}
\label{fig:operators}
\end{figure}
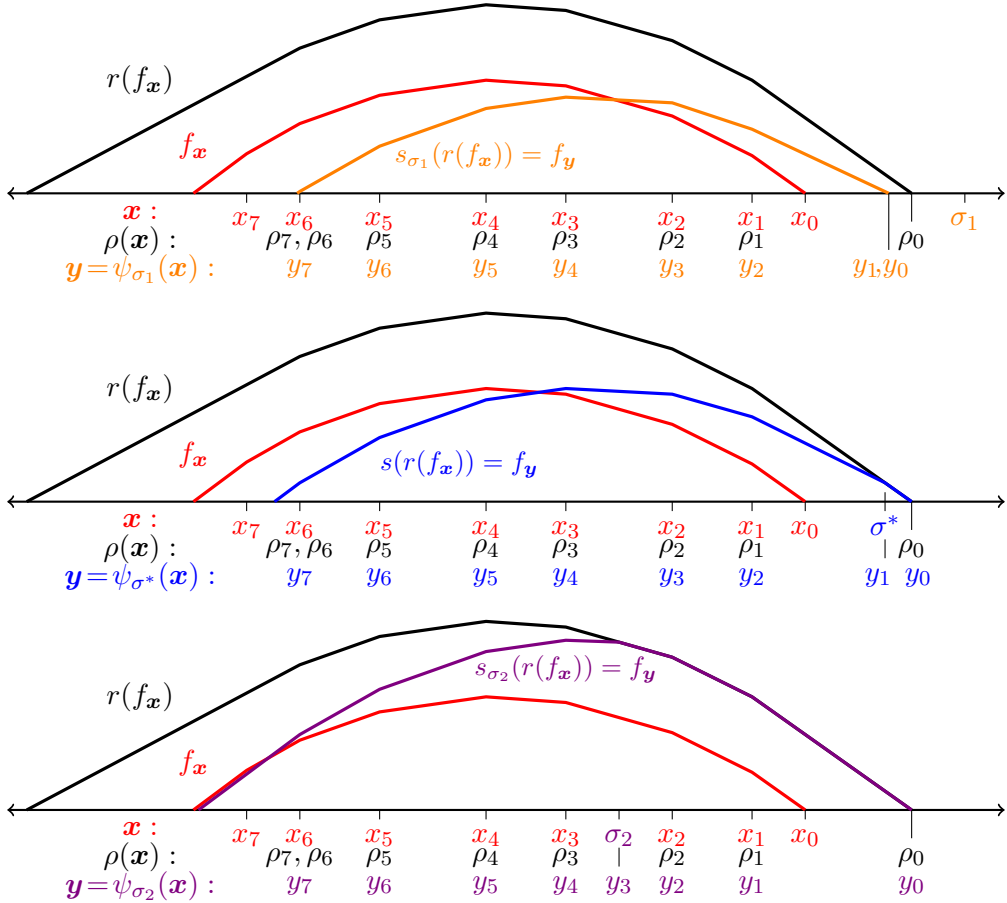

\begin{lem}\label{lem:operators}
Recall the definition of $a_-(g)$ and $a_+(g)$ from \eqref{eq:sigmaRange}. If $\bm{x} \in \R^{K+1}$ is non-increasing and $\sigma \in \R$ satisfies 
\begin{equation}\label{eq:operatorsSigmaCond}
a_-(f_{\bm{x}})\vee x_{k+1} \leq \sigma \leq a_+(f_{\bm{x}}),
\end{equation}
then $s_{\sigma}(r (f_{\bm{x}})) = f_{\psi_\sigma(\bm{x})},$
where
\begin{equation}\label{eq:operators}\psi_\sigma(\bm{x}) \!:= \!\begin{cases}
\left(\frac{\rho(\bm{x})_0 - \beta \sigma}{1-\beta}, \frac{\rho(\bm{x})_0 - \beta \sigma}{1-\beta}, x_1, x_2, x_3, \ldots, x_{K-1}\right) & \rho(\bm{x})_0 < \sigma, \\
\left(\rho(\bm{x})_0, x_1, \ldots, x_i, \sigma, x_{i+1}, x_{i+2}, \ldots, x_{K-1}\right) & \rho(\bm{x})_{i+1}\! <\! \sigma \!\leq \!\rho(\bm{x})_i \text{ for } 0\! \leq\! i\! \leq\! k.
\end{cases}
\end{equation}
\end{lem}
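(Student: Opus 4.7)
The plan is to verify the identity $s_\sigma(r(f_{\bm x})) = f_{\psi_\sigma(\bm x)}$ by direct comparison of piecewise linear functions, starting from the explicit description of $r(f_{\bm x})$ provided by Lemma~\ref{lem:repLin}(ii). That lemma identifies $r(f_{\bm x})$ as a concave piecewise linear function whose upper edge is $\rho(\bm x)_0 = x_0+1-\gamma$, with successive slopes $-1,-(1-\beta),-(1-2\beta),\ldots$ on the intervals $[x_1,\rho(\bm x)_0],[x_2,x_1],\ldots$, and slope $c_-$ on a leftmost piece truncated by $\pi$. Applying $\bar s_\sigma$ leaves the portion $x\geq\sigma$ unchanged and adds $\beta$ to every slope on $(-\infty,\sigma)$; composing with $\pi$ then truncates below zero.

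First I would handle Case~2, where $\rho(\bm x)_{i+1}<\sigma\leq\rho(\bm x)_i$ for some $0\leq i\leq k$. Here $\sigma$ lies strictly inside the slope-$(-(1-i\beta))$ segment of $r(f_{\bm x})$. The breakpoints $\rho(\bm x)_0,x_1,\ldots,x_i$ lying at or above $\sigma$ remain breakpoints with unchanged slopes; the point $\sigma$ becomes a new breakpoint at which the slope drops by $\beta$; and each remaining breakpoint $x_{i+1},x_{i+2},\ldots$ keeps its position while the adjacent slopes all increase by $\beta$, producing the successive slopes $-(1-(i+1)\beta),-(1-(i+2)\beta),\ldots$ and finally $c_-+\beta$ at the leftmost piece. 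These are exactly the slopes and breakpoints read off from the integral representation~\eqref{eq:defT} applied to the vector $\psi_\sigma(\bm x)=(\rho(\bm x)_0,x_1,\ldots,x_i,\sigma,x_{i+1},\ldots,x_{K-1})$, so the two piecewise linear graphs coincide above zero.

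Next I would handle Case~1, where $\sigma>\rho(\bm x)_0$. Since $\sigma$ lies to the right of the support of $r(f_{\bm x})$, every slope shifts by $+\beta$. The old slope-$(-1)$ segment that ended at $\rho(\bm x)_0$ becomes a slope-$(-(1-\beta))$ segment starting at $x_1$; the function value at $\rho(\bm x)_0$ has dropped to $\beta(\rho(\bm x)_0-\sigma)<0$, so the segment must be prolonged rightward until it meets zero. Solving $\beta(\rho(\bm x)_0-\sigma)+(1-\beta)(\rho(\bm x)_0-y_0)=0$ yields the new upper edge $y_0=(\rho(\bm x)_0-\beta\sigma)/(1-\beta)$; using the hypothesis $\sigma\leq a_+(f_{\bm x})$ I would check that $y_0\in[x_1,\rho(\bm x)_0]$, so that the slope change is confined to a single linear piece. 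The remaining breakpoints and slopes propagate exactly as in Case~2 (with an index shift), matching the vector $\psi_\sigma(\bm x)=(y_0,y_0,x_1,\ldots,x_{K-1})$, where the repeated first entry encodes a degenerate slope-$(-1)$ segment.

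To conclude, I would observe that both $s_\sigma(r(f_{\bm x}))$ and $f_{\psi_\sigma(\bm x)}$ arise by applying $\pi$ to piecewise linear functions whose peak locations and slope profiles (both leftward and rightward) have just been matched; since the common slope profile determines where the integrated function crosses zero, the two supports coincide and the functions agree pointwise, with Lemma~\ref{lem:EqClassf} absorbing any discrepancy between entries of $\psi_\sigma(\bm x)$ that lie at or below the common lower edge. The main technical obstacle is the boundary-verification work in Case~1, namely checking that the prolonged segment reaches zero strictly above $x_1$, which is precisely where the bound $\sigma\leq a_+(f_{\bm x})$ is invoked; together with verifying in Case~2 that the lower edge of $s_\sigma(r(f_{\bm x}))$ agrees with that of $f_{\psi_\sigma(\bm x)}$, which is where the constraint $\sigma\geq a_-(f_{\bm x})\vee x_{k+1}$ enters, ruling out pathologies such as $\sigma$ cutting below the peak of $r(f_{\bm x})$ or interfering with the leftmost slope-$c_-$ piece.
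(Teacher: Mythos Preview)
Your approach is essentially the same as the paper's: both verify the identity by direct comparison of the piecewise linear structure, starting from the explicit form of $r(f_{\bm x})$ in Lemma~\ref{lem:repLin}(ii). The paper packages the slope/breakpoint bookkeeping via the integral representation \eqref{eq:defT}, writing $s_\sigma\circ r(f_{\bm x})$ as the difference of two integrals and then rearranging; you do the equivalent computation by tracking slopes and breakpoints geometrically. Both arguments use $\sigma\le a_+(f_{\bm x})$ to place the new upper edge $y_0$ in the correct segment in Case~1, and $\sigma\ge a_-(f_{\bm x})\vee x_{k+1}$ to control the left tail.

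Two small points. First, in Case~1 your phrase ``prolonged rightward until it meets zero'' points the wrong way: since the value at $\rho(\bm x)_0$ is negative and the slope $-(1-\beta)$ is negative, the zero $y_0$ lies to the \emph{left} of $\rho(\bm x)_0$ (indeed $y_0<\rho(\bm x)_0$ follows from $\sigma>\rho(\bm x)_0$). Your equation and its solution are correct regardless. Second, the paper is more explicit than you about the lower-edge matching: it first treats the clean case $x_{K-1}\ge L(f_{\bm x})$, then reduces the general case to this one by replacing $\bm x$ with the truncated vector $\bm x'$ (via Lemma~\ref{lem:EqClassf}) and checking $L(f_{\psi_\sigma(\bm x')})\ge L(f_{\bm x})$ using the bound $\sigma\ge a_-(f_{\bm x})$. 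Your sketch gestures at this (``interfering with the leftmost slope-$c_-$ piece'') but would need that two-case split to be complete; what you have written is a correct outline.
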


\begin{rmk}\label{rmk:operators}
Note from \eqref{eq:sigmaRange}, if $\beta \geq 1$, then $a_+(f_{\bm{x}}) = x_0 + 1 - 2\gamma < \rho(\bm{x})_0$ and so the first case of \eqref{eq:operators} does not apply. Therefore, \eqref{eq:operators} is well defined for all $\sigma$ satisfying \eqref{eq:operatorsSigmaCond}. 
\end{rmk}

\begin{proof}
Denote $f := f_{\bm{x}}$ and $L:= L(f), U:=U(f)$. We start by assuming that $x_{K-1} \geq L$, and so $L' = x_{K-1}$ as defined in Lemma \ref{lem:repLin} (ii). From \eqref{eq:rForT} and the formula $f_{\bm{x}}$ from \eqref{eq:defT}, we may write 
\begin{equation}\label{eq:newrForT}
r(f)(x) = \pi\left(\int_x^{\rho(\bm{x})_0}\!\! -c_- \vee\left(1 - \beta\sum_{j=1}^{K-1} \1_{\{z < x_j\}}(z)\right)dz\right).
\end{equation}
Since $\beta(x - \sigma)_- = -\int_x^\infty \beta\1_{\{z < \sigma\}}(z)dz$, we see that 
\begin{equation}\label{eq:srForT}
s_\sigma \circ r (f)(x) = \pi\left(\int_x^{\rho(\bm{x})_0}\!\! -c_- \vee\left(1 - \beta\sum_{j=1}^{K-1} \1_{\{z < x_j\}}(z)\right)dz - \int_x^\sigma \!\!\beta\1_{\{z < \sigma\}}(z)dz\right).
\end{equation}
We need only show that the expression in \eqref{eq:srForT} is equal to the expression of $f_{\psi_\sigma(\bm{x})}$ from \eqref{eq:defT}. Along with the restriction $\sigma \geq x_{k+1}$ from \eqref{eq:operatorsSigmaCond}, this follows when $\sigma \leq \rho(\bm{x})_0$ by replacing the upper bound of the second integral in \eqref{eq:srForT} with $\rho(\bm{x})_0$. Now let $\rho(\bm{x})_0 < \sigma \leq a_+(f)$, which by Remark \ref{rmk:operators} can only occur if $\beta < 1$. Let $y_0 = U(s_\sigma \circ r (f))$ be the upper edge of $s_\sigma \circ r (f)$. Then $y_0 \leq \rho(\bm{x})_0 < \sigma$ and so $\beta(x - \sigma)_-$ can be rewritten as
\begin{equation*}
\int_x^\sigma \beta \1_{\{z < \sigma\}}(z)dz = \int_x^{y_0} \beta \1_{\{z < y_0\}}(z)dz + \int_{y_0}^\sigma \beta \1_{\{z < \sigma\}}(z)dz.
\end{equation*}
From Lemma \ref{lem:repLin}(ii) and the restriction $\sigma \leq a_+(f) < x_0 + \frac{1-\gamma}{\beta}$, we see that 
\[s_\sigma \circ r (f)(x_0) = r(f)(x_0) + \beta(x_0 - \sigma) = 1 -\gamma + \beta x_0 - \beta \sigma > 0,\]
and so $y_0 \geq x_0$. This implies the integrand of the first integral in \eqref{eq:srForT} is simply 1 for all $x \in [y_0,\rho(\bm{x})_0]$. Thus, so long as 
\begin{equation}\label{eq:Whatisy0}
\int_{y_0}^{\rho(\bm{x})_0}dz - \int_{y_0}^\sigma \beta \1_{\{z < \sigma\}}(z)dz = 
\rho(\bm{x})_0 - \beta\sigma -(1-\beta)y_0 = 0,
\end{equation}
then \eqref{eq:srForT} can be rewritten as 
\begin{equation}\label{eq:srForTStrong}
s_\sigma \circ r (f)(x) = \pi\left(\int_x^{y_0}-c_- \vee \left(1 - \beta \sum_{j=1}^{K-1}\1_{\{z < x_j\}}(z)\right)dz - \int_x^{y_0}\beta\1_{\{z < y_0\}}(z)dz \right).
\end{equation}
Solving \eqref{eq:Whatisy0} yields $y_0 = \frac{\rho(\bm{x})_0 - \beta \sigma}{1-\beta}$, and we see that \eqref{eq:srForTStrong} is indeed equal to the integral expression for $f_{\psi_\sigma(\bm{x})}$ from \eqref{eq:defT}. \\

 Now suppose $L \geq x_{K-1}$. If we let $\bm{x}' = (x_0',x_1',\ldots,x_K')$ where $x_j' = L \vee x_j$, then $f_{\bm{x}'} = f$ by Lemma \ref{lem:EqClassf}. Then we may revert to the case above for $f_{\bm{x}'}$ and see that 
 \[ s_\sigma \circ r (f) = s_\sigma \circ r(f_{\bm{x}'}) = f_{\psi_\sigma(\bm{x}')}.\]
Since we assume from \eqref{eq:operatorsSigmaCond} that $\sigma > L$, we see directly from \eqref{eq:operators} that $\psi_\sigma(\bm{x}')_j = L \vee \psi_\sigma(\bm{x})_j$. Therefore, from Lemma \ref{lem:EqClassf}, if $L(f_{\psi_\sigma(\bm{x}')}) \geq L$, then $f_{\psi_\sigma(\bm{x})} = f_{\psi_\sigma(\bm{x}')}$.

Since $L \geq x_{K-1}$, then $f(L) = 0$ and by \eqref{eq:rForT}, $r(f)(x) < r(f)(L) = 1 - \gamma$ for all $x < L$. Since we restrict $\sigma \geq a_-(f) = L + \frac{1-\gamma}{1 \wedge\beta}$, then 
 \begin{equation*}\label{eq:x<L}
 x < L \,\, \Longrightarrow \,\, s_\sigma \circ r (f)(x) = \pi\left(\underbrace{r(f)(x) + \beta(x - \sigma)_-}_{<0}\right)= -\infty,
 \end{equation*}
and so $L(f_{\psi(\bm{x}')}) \geq L$.
\end{proof}

\subsection{The space of piecewise linear log-profiles is attractive}\label{sec:pwl}

The goal of this section is to prove that $\mathcal{T}$ is attractive in the following sense:
\begin{thm}\label{thm:pwl}
There exists a number $t_\ell := t_\ell(\gamma, \beta)$ (depending only on $\gamma$ and $\beta$) such that for all $g^0\in{\cal D}$, then $g^t \in \mathcal{T}$ for all $t \geq t_\ell$.
\end{thm}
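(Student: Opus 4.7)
The proof strategy is to track how the piecewise-linear slope structure of $g^t$ emerges and organizes into the canonical form of $\mathcal T$ as $t$ increases.

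First I would show that for any $g\in\mathcal D$, the function $r(g)$ is piecewise linear on its support with slopes drawn from $\{-1,c_-\}$. This follows by writing $\bar r(g)(x)=1-\gamma+\sup_z(g(z)+h(x-z))$ as the upper envelope of translated copies of the piecewise-linear tent $h$; such an envelope has slopes drawn from the slopes of $h$ itself. Moreover, $r$ has a ``capping'' effect: any slope of $g$ outside the interval $[-1,c_-]$ is automatically replaced, because the optimal $z^*$ in the supremum moves to a point where the derivative of $h$ matches the slope of $g$, and $h'$ only attains the values $-1$ and $c_-$.

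Next, I would analyze the effect of $s_{\sigma^t}$. Using that $\beta(x-\sigma^t)_-$ has derivative $+\beta$ on $\{x<\sigma^t\}$ and $0$ on $\{x>\sigma^t\}$, and that $\pi$ truncates wherever the result is negative, $g^{t+1}$ is piecewise linear with slopes drawn from $\{-1,c_-,-(1-\beta),c_-+\beta\}$. Iterating and using the capping property above, I would show by induction that the slopes of $g^t$ lie in $\{-(1-j\beta):0\leq j\leq t\}\cup\{c_-+\beta\}$: each application of $r$ resets the artificial ascending slopes $c_-+j\beta$ (for $j\geq 2$) back to $c_-$, which the subsequent application of $s_{\sigma^t}$ re-shifts to $c_-+\beta$ on the leftmost piece. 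The rightmost slope is always $-1$ (freshly produced by $r$, per Lemma~\ref{lem:repLin}(i)), and the slope to its left accumulates one extra shift of $+\beta$ per iteration. After $t\geq K-1$ iterations, every slope in the canonical set of $\mathcal T$ is available, and further applications merely rearrange the breakpoints via the finite-dimensional map $\psi_\sigma$ of Lemma~\ref{lem:operators}.

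To establish concavity of $g^t$ and match the form of $\mathcal T$ exactly, I would control the behavior of any secondary peaks that $g^0$ (and hence $r(g^0)$) may possess. A direct computation shows that if $g^t$ has a global maximum at $z_p$ of height $\gamma$ and a secondary peak at $z_s<z_p$ of height $h_s<\gamma$, then after one iteration of $s\circ r$ the secondary peak drops in height by at least $\beta(z_p-z_s)>0$, because $\sigma^t$ lies strictly to the right of both peaks (at distance $(1-\gamma)/\beta$ from $z_p$ in view of the bounds $\sigma^t\in[a_-(g^t),a_+(g^t)]$ of \eqref{eq:sigmaRange}). Iterating, the secondary peaks are absorbed into the support boundary by the projector $\pi$ in a bounded number of steps. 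Combined with the a priori bound $L(g^{t+1})\geq\sigma^t-1/\beta$ (which follows from $r(g^t)\leq 1$), this gives $g^t\in\mathcal T$ for all $t\geq t_\ell(\gamma,\beta)$.

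The main obstacle is obtaining the uniformity of $t_\ell$ in $g^0$: the number of secondary peaks of $g^0$ and their horizontal distances to the global maximum may be arbitrary. I expect the key point is that, after a single $s\circ r$-step, the ``effective'' support of $g^1$ (where $g^1$ exceeds a threshold close to $\gamma$) is already controlled by a constant depending only on $\gamma$ and $\beta$, because the lower bound on $\sigma^0$ and the bound $r(g^0)\leq 1$ together force any surviving bumps of $g^1$ to concentrate near $U(g^1)$. From $t=1$ onwards, the peaks therefore sit at uniformly bounded horizontal distances, and the $\beta(z_p-z_s)$-decay argument above forces entry into $\mathcal T$ in a further uniformly bounded number of iterations.
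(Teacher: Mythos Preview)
Your first step contains a fundamental error. The claim that $r(g)$ is piecewise linear with slopes in $\{-1,c_-\}$ for general $g\in\mathcal D$ is false. The envelope argument you sketch is incorrect: the sup-convolution $\sup_z(g(z)+h(x-z))$ is the upper envelope of tents with apices at $(z,g(z))$, and wherever $g$ has a (sub)derivative in the open interval $(-1,c_-)$ the optimal $z^*$ equals $x$ itself, giving $\bar r(g)(x)=1-\gamma+g(x)$. Thus $r(g)$ inherits the shape of $g$ in the interior of its support; a smooth or otherwise non-piecewise-linear $g^0$ produces a non-piecewise-linear $r(g^0)$. The operator $r$ caps only slopes that lie \emph{outside} $[-1,c_-]$; it does not reduce interior slopes to the two extreme values, and your inductive slope-tracking collapses at step one. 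Consequently the later claims about the slope set of $g^t$ and the secondary-peak decay are built on a false premise.

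The paper's route is quite different and avoids any global slope analysis. It proves a comparison principle (Lemma~\ref{lem:recursion}): starting from the crude majorant $f_{\bm x^0}$ with $\bm x^0=(\omega(g^0),L^0,\dots,L^0)$, one has $g^t\le f_{\bm x^t}$ with \emph{equality for all $x\ge U^0$}. The equality part is the key observation---by Lemma~\ref{lem:repLin}(i), $r(g^t)$ is exactly the line of slope $-1$ on $[U^t,\infty)$, so the portion of $g^{t+1}$ to the right of $U^0$ is piecewise linear from the outset. The upper edge $x_0^t$ advances by at least a constant depending only on $(\gamma,\beta)$ each step (equation~\eqref{eq:FrontEdge}), while the bound $s_\sigma\circ r(g^t)\le\pi(1+\beta(x-\sigma^t))$ together with $\sigma^t\ge x_{k+1}^t$ forces the support of $g^{t+1}$ to eventually lie entirely to the right of $U^0$. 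At that point $g^t=f_{\bm x^t}\in\mathcal T$. Uniformity of $t_\ell$ in $g^0$ is automatic, since the advance rate of the front and the support width bound depend only on $(\gamma,\beta)$. Your proposed mechanism---secondary peaks decaying at rate $\beta(z_p-z_s)$---does not yield uniformity even if the slope error were fixed, because nothing in your argument bounds $z_p-z_s$ away from zero uniformly in $g^0$.
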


In proving Theorem \ref{thm:pwl}, we will first prove the following comparison principle:

\begin{lem}[comparison principle]\label{lem:recursion}
Define $\bm{x}^0 = (\omega(g^0),L^0,L^0,\ldots, L^0) \in \R^{K+1}$. The recursion 
\[ 
\bm{x}^{t} := \psi_{\sigma^{t-1}}(\bm{x}^{t-1}), \qquad t \geq 1\]
is well defined, in that $\sigma^{t-1}$ satisfies the conditions \eqref{eq:operatorsSigmaCond} to apply $\psi_{\sigma^{t-1}}$ to $\bm{x}^{t-1}$. Furthermore, for all $t \geq 1$, $g^t \leq f_{\bm{x}^t}$ and $g^t(x) = f_{\bm{x}^t}(x)$ for all $x \geq U^0$.
\end{lem}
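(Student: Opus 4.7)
The proof proceeds by induction on $t$, simultaneously establishing three properties: (i) $\sigma^t$ satisfies condition \eqref{eq:operatorsSigmaCond} applied to $\bm{x}^t$, so that Lemma~\ref{lem:operators} yields $f_{\bm{x}^{t+1}}=f_{\psi_{\sigma^t}(\bm{x}^t)}=s_{\sigma^t}\circ r(f_{\bm{x}^t})$; (ii) $g^{t+1}\le f_{\bm{x}^{t+1}}$; and (iii) $g^{t+1}(x)=f_{\bm{x}^{t+1}}(x)$ for $x\ge U^0$. The two workhorses are the pointwise monotonicity of the operators $r$ and $s_\sigma$ in their function argument, which is immediate from \eqref{eq: Rg} and \eqref{eq: Ssigmar}, together with the structural formulas of Lemma~\ref{lem:repLin}. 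For the initial inequality $g^0\le f_{\bm{x}^0}$: by definition of $\omega(g^0)$, $g^0(z)\le \omega(g^0)-z=x_0^0-z$ for every $z$, while by construction of $\bm{x}^0$ the function $f_{\bm{x}^0}$ equals $(x_0^0-z)_+$ on $[L^0,x_0^0]\supseteq \mathrm{Supp}(g^0)$.

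Granted (i) at time $t$, property (ii) at time $t+1$ follows at once from monotonicity: $g^{t+1}=s_{\sigma^t}(r(g^t))\le s_{\sigma^t}(r(f_{\bm{x}^t}))=f_{\bm{x}^{t+1}}$. For (iii) one first shows $\omega(g^t)=x_0^t$: since the slopes of $f_{\bm{x}^t}$ are all at least $-1$, the map $z\mapsto f_{\bm{x}^t}(z)+z$ is non-decreasing on $\mathrm{Supp}(f_{\bm{x}^t})$ and attains its maximum $x_0^t$ at the upper edge $z=x_0^t$; this upper edge lies in $[U^0,\infty)$ (by construction at $t=0$ and because $x_0^t$ is non-decreasing along the iteration), so by the inductive equality (or by $x_0^0=\omega(g^0)$ at $t=0$) we have $g^t(x_0^t)=f_{\bm{x}^t}(x_0^t)=0$, giving $\omega(g^t)\ge x_0^t$, while the reverse inequality is from $g^t\le f_{\bm{x}^t}$. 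Lemma~\ref{lem:repLin}(i)-(ii) then gives $r(g^t)=r(f_{\bm{x}^t})$ on $[U^0,\infty)$: on $[U(g^t),\infty)$ both equal $\pi(1-\gamma+x_0^t-x)$, while on $[U^0,U(g^t))$ the sup defining $r(g^t)(x)$ is attained at $z=x$ by the diagonal case of Lemma~\ref{lem:repLin}(ii) applied to $f_{\bm{x}^t}$ (using $g^t=f_{\bm{x}^t}$ on $[U^0,\infty)$ and the fact that the relevant slopes of $f_{\bm{x}^t}$ lie in $[-1,c_-]$), giving $r(g^t)(x)=1-\gamma+g^t(x)=r(f_{\bm{x}^t})(x)$. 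Applying $s_{\sigma^t}$ pointwise preserves this equality.

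The main obstacle is verifying (i). Since $g^t\le f_{\bm{x}^t}$ forces $L(g^t)\ge L(f_{\bm{x}^t})$ and $\omega(g^t)\le \omega(f_{\bm{x}^t})$, Lemma~\ref{lem:sigmatBounds} immediately yields $a_-(f_{\bm{x}^t})\le\sigma^t\le a_+(f_{\bm{x}^t})$. The delicate remaining bound is $\sigma^t\ge x_{k+1}^t$, which I would prove by contradiction: when $x_{k+1}^t\ge U^0$, property (iii) gives $g^t(x_{k+1}^t)=f_{\bm{x}^t}(x_{k+1}^t)=\sup f_{\bm{x}^t}\ge \sup g^t=\gamma$, so taking the test point $z=x_{k+1}^t$ in the sup defining $r(g^t)(x_{k+1}^t)$ produces $r(g^t)(x_{k+1}^t)\ge 1$, and the assumption $\sigma^t<x_{k+1}^t$ would force $s_{\sigma^t}(r(g^t))(x_{k+1}^t)=r(g^t)(x_{k+1}^t)\ge 1>\gamma$, contradicting $\sup s_{\sigma^t}(r(g^t))=\gamma$. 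The case $x_{k+1}^t<U^0$ (which can arise at small $t$) must be handled separately: at $t=0$, $x_{k+1}^0=L^0$ while $\sigma^0\ge L^0+(1-\gamma)/(1\wedge\beta)>L^0$ by Lemma~\ref{lem:sigmatBounds}; for subsequent small $t$ one tracks $x_{k+1}^t$ under the recursion \eqref{eq:operators} to ensure the slack $\sigma^t-x_{k+1}^t$ stays positive until $x_{k+1}^t$ reaches $U^0$.
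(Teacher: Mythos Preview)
Your inductive scheme, the monotonicity argument for (ii), and the use of Lemma~\ref{lem:repLin} to establish the equality (iii) on $[U^0,\infty)$ are all in line with the paper's proof. The bounds $a_-(f_{\bm x^t})\le\sigma^t\le a_+(f_{\bm x^t})$ via Lemma~\ref{lem:sigmatBounds}(i) are also handled correctly.

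The genuine gap is in your verification of $\sigma^t\ge x_{k+1}^t$. Your contradiction argument for the case $x_{k+1}^t\ge U^0$ is fine, but for the complementary case $x_{k+1}^t<U^0$ you only say that ``one tracks $x_{k+1}^t$ under the recursion \eqref{eq:operators} to ensure the slack $\sigma^t-x_{k+1}^t$ stays positive.'' This is not an argument: nothing in what you wrote prevents the slack from vanishing before $x_{k+1}^t$ reaches $U^0$, and you give no mechanism to control it. The paper closes this gap with a two-line observation that works uniformly in $t\ge1$, with no case split: inspecting the formulas in \eqref{eq:operators}, one sees that in every branch the $(k+1)$-st coordinate of $\psi_\sigma(\bm x)$ is at most $\sigma$ (either it equals $\sigma$ itself, or it equals some $x_j$ with $j\le k$ and the branch condition forces $\sigma>x_{j}$). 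Hence $x_{k+1}^t=\psi_{\sigma^{t-1}}(\bm x^{t-1})_{k+1}\le\sigma^{t-1}$, and then Lemma~\ref{lem:sigmatBounds}(ii) gives $\sigma^{t-1}\le\sigma^t$. You should replace your Case~B discussion with this argument; note that it relies on the monotonicity of $(\sigma^t)_t$, which you did not invoke anywhere.
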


To complete the argument of Theorem \ref{thm:pwl}, we show in the proof of Theorem \ref{thm:pwl} that the supports of the profiles $(g^t)_{t \geq 0}$ are increasing. Specifically we show that the upper edges $(U^t)_{t \geq 0}$ are always increasing by at least some constant $C$. By applying the comparison principle described above, this implies that more and more of the fronts of the profiles $(g^t)_{t \geq 0}$ are piecewise linear until eventually, $g^t = f_{\bm{x}^t} \in \mathcal{T}$ for all $t$ large enough.

\medskip

 Before proving Lemma \ref{lem:recursion}, we begin with a technical lemma that provides bounds for $\sigma^t$. These bounds will imply that $\sigma^t$ satisfies the conditions of Lemma \ref{lem:operators} so that we may apply $\psi_{\sigma^t}$ to $\bm{x}^t$ and guarantee that $s_{\sigma^t} \circ r (f_{\bm{x}^t}) = f_{\bm{x}^{t+1}}$. 

 \begin{lem}\label{lem:sigmatBounds} Recall the sequence $(\sigma^t)_{t \geq 0}$ from \eqref{eq:sigmatDef}. The following hold:
 \begin{itemize}
\item[(i)] Recall $a_-$ and $a_+$ from \eqref{eq:sigmaRange}. For all $t \geq 0$, $\omega(g^t) \in [U^t, U^t + \gamma]$
\[  a_-(g^t) \leq \sigma^t \leq a_+(g^t).\]
\item[(ii)] The sequence $(\sigma^t)_{t=0}^\infty$ is non-decreasing.
\end{itemize}
 \end{lem}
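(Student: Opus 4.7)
My plan is to verify the three statements of the lemma in turn. The bounds on $\omega(g^t)$ follow directly from the definition $\omega(g) = \sup_z(g(z) + z)$ together with $g^t(z) \in [0, \gamma]$ for $z$ in the support $[L^t, U^t]$: approaching $z \to U^t$ (where $g^t \geq 0$) yields $\omega(g^t) \geq U^t$, while the pointwise bounds on $g^t$ and on $z$ give $\omega(g^t) \leq U^t + \gamma$.

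For $a_-(g^t) \leq \sigma^t \leq a_+(g^t)$, I would abbreviate $R := r(g^t)$ (with $\sup R = 1$) and use the pointwise upper bound $R(x) \leq \pi(1 - \gamma + \omega(g^t) - x)$ from Lemma~\ref{lem:repLin}(i). The \emph{upper bound} amounts to verifying $\sup s_{a_+(g^t)}(R) \leq \gamma$: for $x \geq a_+$ one has directly $R(x) \leq 1 - (1-\gamma)/(1\wedge\beta) \leq \gamma$; for $x < a_+$, I would split at $x = \omega - \gamma$ and combine the two bounds $R(x) \leq 1 - \gamma + \omega - x$ and $R(x) \leq 1$ to control $R(x) + \beta(x - a_+)$ in each sub-interval, exploiting that the substitution of $a_+$ produces exactly the slope needed to keep the coefficient of $x$ in check. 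The \emph{lower bound} amounts to verifying $\sup s_{a_-(g^t)}(R) \geq \gamma$. I would pick a near-argmax $z^\ast$ of $g^t$, so that $R(z^\ast) \geq 1 - \gamma + g^t(z^\ast) \geq 1$. If $z^\ast \geq a_-$, then $s_{a_-}(R)(z^\ast) = R(z^\ast) \geq 1 \geq \gamma$. Otherwise the choice of evaluation point depends on $\beta$: for $\beta \leq 1$ take $x = z^\ast$ and compute $s_{a_-}(R)(z^\ast) \geq 1 + \beta(z^\ast - a_-) = \gamma + \beta(z^\ast - L^t) \geq \gamma$; for $\beta > 1$ take $x = a_-$ instead and use the lower bound $R(a_-) \geq 1 + h(a_- - z^\ast) = 1 - (a_- - z^\ast) \geq 1 - (a_- - L^t) = \gamma$.

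To prove $\sigma^{t+1} \geq \sigma^t$ in (ii), the plan is to show $\sup s_{\sigma^t}(r(g^{t+1})) \geq \gamma$, which is equivalent since $\sigma \mapsto \sup s_\sigma(\cdot)$ is non-increasing. The key observation is that any near-argmax $\hat z$ of $g^{t+1}$ satisfies $\hat z \leq \sigma^t$: on $[\sigma^t, \infty)$ we have $g^{t+1} = r(g^t)$, and since $\sup g^{t+1} = \gamma < 1 = \sup r(g^t)$ the argmax of $r(g^t)$ must lie strictly left of $\sigma^t$, so $r(g^t)$ is non-increasing on $[\sigma^t, \infty)$ with $r(g^t)(\sigma^t) \leq \gamma$, forcing $\hat z \leq \sigma^t$. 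If $\hat z = \sigma^t$, the desired bound follows from $r(g^{t+1})(\sigma^t) \geq 1 - \gamma + g^{t+1}(\sigma^t) = 1$. If $\hat z < \sigma^t$, the defining identity $r(g^t)(\hat z) + \beta(\hat z - \sigma^t) = \gamma$ combined with $r(g^t) \leq 1$ yields $\sigma^t - \hat z \leq (1-\gamma)/\beta$; since also $g^{t+1}(\hat z) = \gamma$ forces $r(g^{t+1})(\hat z) \geq 1$, one obtains $s_{\sigma^t}(r(g^{t+1}))(\hat z) \geq \pi(1 - \beta(\sigma^t - \hat z)) \geq \pi(\gamma) = \gamma$, as needed.

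The main obstacle is the case analysis in (i): the evaluation point certifying the required $\gamma$-bound depends on the position of the argmax of $g^t$ relative to $a_\pm(g^t)$ and on whether $\beta \leq 1$ or $\beta > 1$, and the identity $\sup R = 1$ plus the linearization of $R$ near $U^t$ must be deployed in different ways in each regime. Part (ii), by contrast, reduces to a single clean calculation once the localization $\hat z \leq \sigma^t$ is isolated.
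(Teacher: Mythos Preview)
Your proposal is correct in substance, though your route differs from the paper's in a couple of places.

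For the upper bound $\sigma^t \leq a_+(g^t)$ in (i), your argument is exactly the paper's: both use $R \leq 1 \wedge \pi(1-\gamma+\omega(g^t)-x)$ and verify that $s_{a_+}$ of this upper envelope has supremum $\gamma$. For the lower bound, the paper instead first shows $g^t(x) \leq 2\gamma-1$ for $x\geq\sigma^t$ (so the argmax of $g^t$ lies in $[L^t,\sigma^t]$), and then reads off two inequalities for $\sigma^t$ directly from $\gamma \geq s_{\sigma^t}(R)(\cdot)$; your approach of testing at $\sigma=a_-$ and certifying $\sup s_{a_-}(R)\geq\gamma$ via a near-argmax point is the natural dual and equally clean. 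One small caution: $\sup s_{a_-}(R)\geq\gamma$ is not literally equivalent to $\sigma^t\geq a_-$ (equality at $a_-$ would place $a_-$ in the defining set of the infimum); what you actually need, and what your computation delivers once you replace $a_-$ by any $\sigma<a_-$, is the strict inequality $\sup s_\sigma(R)>\gamma$.

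For (ii), the paper argues by contradiction: assuming $\sigma^{t+1}<\sigma^t$, it extracts from $g^{t+1}\leq 1+\beta(\cdot-\sigma^t)_-$ a uniform slack $\eps>0$ in the bound $\beta(x-\sigma^{t+1})_-\geq g^{t+1}(x)-1+\eps$, and then shows $\sup g^{t+2}\geq\gamma+\eps$. Your direct argument is more economical, but note a slip: you invoke ``$r(g^t)$ is non-increasing on $[\sigma^t,\infty)$'' to force $\hat z\leq\sigma^t$, and this monotonicity is not available for general $g^t\in\mathcal{D}$ (only under concavity). Fortunately the claim is unnecessary: if $\hat z\geq\sigma^t$ then $s_{\sigma^t}(r(g^{t+1}))(\hat z)=r(g^{t+1})(\hat z)\geq 1>\gamma$ immediately, and your existing computation already handles $\hat z<\sigma^t$. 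With that correction your direct argument goes through and is arguably tidier than the paper's contradiction.
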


 \begin{proof}
By definition of $U^t$ being the upper edge of $g^t$ and since $\sup g^t =\gamma$, we see immediately that $U^t \leq \omega(g^t) \leq U^t + \gamma$.
Recall that since $\sigma^t := \sigma^\ast(r(g^t))$, then $s \circ r (g) = s_{\sigma^t}(r(g^t))$. 

Starting with the lower bound of $\sigma^t$, we note that for all $x \geq \sigma^t$, since $h(0) = 0$,
\[ \gamma \geq s \circ r (g^t)(x) = r(g^t)(x) \geq 1 - \gamma + g^t(x),\]
and so $g^t(x) \leq 2\gamma - 1 < \gamma$. Since $\sup g^t = \gamma$, this implies that $\sup_{z \in [L^t,\sigma^t]}g^t(z) = \gamma$, and
\begin{multline*} \gamma \geq 0 \vee s \circ r(g^t)(\sigma^t) = 0 \vee r(g^t)(\sigma^t) \geq 1 - \gamma + \sup_{z \in [L^t,\sigma^t]}(g^t(z) + h(\sigma^t - z)) \\
\geq 1 - \gamma + \sup_{z \in [L^t,\sigma^t]}g^t(z) + L^t - \sigma^t = 1 + L^t - \sigma^t,
\end{multline*}
as well as
\begin{multline*}
\gamma \geq \sup_{z \in [L^t,\sigma^t]}(s \circ r (g^t)(z))= \sup_{z \in [L^t,\sigma^t]}(r(g^t)(z) + \beta(z - \sigma^t)) \\
\geq 1 - \gamma + \sup_{z \in [L^t,\sigma^t]}g^t(z) + \beta(L^t-\sigma^t) = 1 + \beta(L^t - \sigma^t).
\end{multline*}
The lower bound of (ii) then follows by solving for $\sigma^t$ in the inequalities above and taking the maximum of the two values. 

Now onto the upper bound of $\sigma^t$. When $\sup g^t = \gamma$, then $\sup r(g^t) =1$ which along with Lemma \ref{lem:repLin}(i) implies that for all $x \in \R$,
\[ r(g^t)(x) \leq r^t_{\max}(x) := 1\wedge \pi(1 - \gamma + \omega(g^t) - x).\]
We can verify that
\[ \sup s_{a_+(g^t)}(r^t_{\max}) = \sup_{x \in \R}\left( r^t_{\max}(x) + \beta(x -a_+(g^t))_-\right) = \gamma.\] 
Therefore $\sigma^t \leq \sigma^\ast(r^t_{\max}) \leq a_+(g^t)$ since $r(g^t) \leq r^t_{\max}$.\\

The proof of (ii) follows by contradiction, and so suppose $\sigma^{t+1} < \sigma^t$. For all $x \in \R$,
\[ g^{t+1}(x) = \pi\left(r(g^t)(x) + \beta(x - \sigma^t)_-\right) \leq 1 + \beta(x - \sigma^t)_- \,\, \Longrightarrow \,\, \beta(x - \sigma^t)_- \geq g^{t+1}(x) - 1.\]
Let $\eps = (1-\gamma) \wedge \beta(\sigma^t - \sigma^{t+1})$. Since $\sup g^{t+1} = \gamma$, the above implies that for all $x \in \R$,
\[ \beta(x - \sigma^{t+1})_- \geq g^{t+1}(x) - 1 + \eps.\]
By evaluating $g^{t+2}$ and recalling that $h(0) = 0$, we then see that 
\begin{multline*}
\sup g^{t+2} = \sup_{x \in \R} \left(r(g^{t+1})(x) + \beta(x - \sigma^{t+1})_-\right) \\ \geq 
\sup_{x \in \R} \left(g^{t+1}(x) + 1-\gamma + g^{t+1}(x) - 1 + \eps\right) = 2\sup g^{t+1} - \gamma + \eps = \gamma + \eps,
\end{multline*}
a contradiction to $\sup g^{t+2} = \gamma$.
\end{proof}

Recalling the conditions imposed on $\sigma$ to apply Lemma \ref{lem:operators}, the following is a corollary to Lemma \ref{lem:sigmatBounds}(i):

\begin{cor}\label{cor:operators}
If $g = f_{\bm{x}} \in \mathcal{T}$ for some non-increasing $\bm{x} \in \R^{K+1}$, then $\sigma^\ast := \sigma^\ast(r(g))$ satisfies the conditions \eqref{eq:operatorsSigmaCond} of Lemma \ref{lem:operators} and 
\[ s \circ r(g) = f_{\psi_{\sigma^\ast}(\bm{x})} \in \mathcal{T}.\]
\end{cor}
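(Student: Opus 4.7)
My plan is to verify the two assertions of the corollary in sequence: first that $\sigma^\ast$ satisfies \eqref{eq:operatorsSigmaCond}, then (as an immediate consequence of Lemma~\ref{lem:operators}) the identification $s \circ r(g) = f_{\psi_{\sigma^\ast}(\bm{x})}$, and finally the two properties (non-increasing vector and supremum equal to $\gamma$) needed for membership in $\mathcal{T}$.

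For the bounds on $\sigma^\ast$: since $g = f_{\bm{x}} \in \mathcal{T}$ satisfies $\sup g = \gamma$, I would apply Lemma~\ref{lem:sigmatBounds}(i) with $g^0 := g$, which directly yields the outer inequalities $a_-(f_{\bm{x}}) \leq \sigma^\ast \leq a_+(f_{\bm{x}})$. The remaining bound $\sigma^\ast \geq x_{k+1}$ requires a short separate argument. By Remark~\ref{rmk:supf=xk+1}, $f_{\bm{x}}(x_{k+1}) = \sup f_{\bm{x}} = \gamma$, so $x_{k+1}$ belongs to the support of $f_{\bm{x}}$, and Lemma~\ref{lem:repLin}(ii) gives $r(f_{\bm{x}})(x_{k+1}) = 1 - \gamma + \gamma = 1$. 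If $\sigma^\ast$ were at most $x_{k+1}$, then $(x_{k+1} - \sigma^\ast)_- = 0$ and hence $\bar{s}_{\sigma^\ast}(r(f_{\bm{x}}))(x_{k+1}) = 1$, contradicting $\sup s_{\sigma^\ast}(r(f_{\bm{x}})) \leq \gamma < 1$. With \eqref{eq:operatorsSigmaCond} established, Lemma~\ref{lem:operators} immediately yields $s \circ r(g) = f_{\psi_{\sigma^\ast}(\bm{x})}$.

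It remains to show $f_{\psi_{\sigma^\ast}(\bm{x})} \in \mathcal{T}$. The supremum condition $\sup f_{\psi_{\sigma^\ast}(\bm{x})} = \gamma$ follows from continuity and monotonicity of $\sigma \mapsto \sup s_\sigma(r(g))$ together with the definition of $\sigma^\ast$ as an infimum, so the remaining task is verifying that $\psi_{\sigma^\ast}(\bm{x})$ is non-increasing. This reduces to a short case analysis on \eqref{eq:operators}. In the second case $\rho(\bm{x})_{i+1} < \sigma^\ast \leq \rho(\bm{x})_i$, the vector inserts $\sigma^\ast$ between $x_i$ and $x_{i+1}$, and monotonicity follows from $\rho(\bm{x})_0 = x_0 + 1 - \gamma \geq x_1$ together with $\rho(\bm{x})_j = x_j$ for $j \geq 1$ and the fact that $\bm{x}$ is non-increasing. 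The main obstacle is the first case $\sigma^\ast > \rho(\bm{x})_0$ (which only arises when $\beta < 1 - \gamma$): here the first two coordinates of $\psi_{\sigma^\ast}(\bm{x})$ coincide, equal to $y_0 := (\rho(\bm{x})_0 - \beta \sigma^\ast)/(1-\beta)$, and I must check $y_0 \geq x_1$. Using the upper bound $\sigma^\ast \leq a_+(f_{\bm{x}}) = x_0 - \gamma + (1-\gamma)/\beta$, a direct computation gives $\rho(\bm{x})_0 - \beta\sigma^\ast \geq (1-\beta)x_0 + \beta\gamma$, hence $y_0 \geq x_0 + \beta\gamma/(1-\beta) \geq x_1$, which closes the argument.
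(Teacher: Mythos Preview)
Your proof is correct and follows essentially the same approach as the paper: both invoke Lemma~\ref{lem:sigmatBounds}(i) for the bounds $a_-(f_{\bm{x}}) \leq \sigma^\ast \leq a_+(f_{\bm{x}})$ and use $r(g)(x_{k+1}) = 1$ (via Lemma~\ref{lem:repLin}(ii)) to force $\sigma^\ast \geq x_{k+1}$. Your additional verification that $\psi_{\sigma^\ast}(\bm{x})$ is non-increasing and that $\sup f_{\psi_{\sigma^\ast}(\bm{x})} = \gamma$ is more explicit than the paper, which leaves these implicit in the statement of Lemma~\ref{lem:operators} and in the phrase ``$\sup s\circ r(g) = \gamma$'' used without further justification.
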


\begin{proof}
Since $g \in \mathcal{T}$, then $g(x_{k+1}) = \gamma$. By using Lemma \ref{lem:repLin}(ii) for $r(g)$, for all $\sigma < x_{k+1}$, 
\[ s_{\sigma} \circ r (g)(x_{k+1}) = r(g)(x_{k+1}) = 1 - \gamma + g(x_{k+1}) = 1 > \gamma.\]
Since $\sup s \circ r (g) = \gamma$, we conclude that $\sigma^{\ast}(r(g)) \geq x_{k+1}$. The result then follows from Lemma \ref{lem:sigmatBounds}(i) by setting $g^0 = g$.
\end{proof}

We now move on to the proof of our comparison principle Lemma \ref{lem:recursion}.

\begin{proof}[Proof of Lemma \ref{lem:recursion}]
We prove the statement by induction. We see that $g^0 \leq f_{\bm{x}^0}$, and so $r(g^0) \leq r(f_{\bm{x}^0})$ and by Lemma \ref{lem:sigmatBounds}(i), 
\[a_-(f_{\bm{x}^0}) \leq a_-(g^0) \leq \sigma^0 \leq a_+(g^0) = a_+(f_{\bm{x}^0}). \]
Therefore, $\sigma^0$ satisfies the conditions \eqref{eq:operatorsSigmaCond} and $s_{\sigma^0} \circ r (f_{\bm{x}^0}) = f_{\bm{x}^1}$. By Lemma \ref{lem:repLin} we also see that $r(g^0)(x) = r(f_{\bm{x}^0})(x)$ for all $x \geq U^0$. This equality as well as the inequality $g^0 \leq f_{\bm{x}^0}$ are maintained when applying the operator $s_{\sigma^0}$ to both $r(g^0)$ and $r(f_{\bm{x}^0})$, which completes the base case $t=1$.

\medskip

Now suppose the claim holds for some $t \geq 1$, and so $g^t \leq f_{\bm{x}^t}$, with equality for $x \geq U^0$. We then clearly have that 
\begin{equation}\label{eq:pwlIneq}
\sup_{z \in \R}(g^t(z) + h(x-z)) \leq \sup_{z \in \R}(f_{\bm{x}^t}(z) + h(x-z)),
\end{equation}
and so $r(g^t) \leq r(f_{\bm{x}^t})$; applying the operator $s_{\sigma^t}$ to both functions maintains the inequality, and so $g^{t+1} \leq s_{\sigma^t} \circ r (f_{\bm{x}^t})$. 

\medskip

By Lemma \ref{lem:repLin}(ii), $\omega(f_{\bm{x}^t}) = x_0^t$, while $x_0^t = U^t \leq \omega(g^t) \leq \omega(f_{\bm{x}^t})$, and so by Lemma \ref{lem:repLin}, $r(g^t)(x) = r(f_{\bm{x}^t})(x)$ for all $x \geq x_0^t$. For all $x\leq x_0^t$, from the slopes of $f_{\bm{x}^t}$, we know that $f_{\bm{x}^t}(x) \leq f_{\bm{x}^t}(y) + y-x$ for all $y > x$. In particular, this implies (along with the induction hypothesis) that for all $x \in [U^0, x_0^t]$, 
\begin{multline*}
r(g^t)(x) \geq \pi\left(1 -\gamma + \sup_{z \geq x}(g^t(z) + h(x-z))\right)\\
= \pi\left(1-\gamma + \sup_{z \geq x}(f_{\bm{x}^t}(z) + h(x-z))\right) = r(f_{\bm{x}^t})(x),
\end{multline*}
The above along with \eqref{eq:pwlIneq} implies that $r(g^t)(x) = r(f_{\bm{x}^t})(x)$ for all $x \geq U^0$; the equality is maintained by applying the operation $s_{\sigma^t}$ to both functions, and so $g^{t+1}(x) = s_{\sigma^t} \circ r (f_{\bm{x}^t})(x)$ for $x \geq U^0$. 

\medskip

The inductive step is now complete if we show that $\sigma^{t}$ satisfies \eqref{eq:operatorsSigmaCond}, and so $s_{\sigma^t} \circ r (f_{\bm{x}^t}) = f_{\bm{x}^{t+1}}$. We have already argued that $\omega(g^t) = \omega(f_{\bm{x}^t}) = x^t_0$ while $L(f_{\bm{x}^t}) \leq L^t$ is evident since $g^t \leq f_{\bm{x}^t}$, and applying Lemma \ref{lem:sigmatBounds}(i) to $g^t$ guarantees 
\[ a_-(f_{\bm{x}^t}) \leq a_-(g^t) \leq \sigma^t \leq a_+(g^t) = a_+(f_{\bm{x}^t}).\]
All that is left is to show that $\sigma^t \geq x_{k+1}^t$. By examining \eqref{eq:operators} of Lemma \ref{lem:operators}, the induction hypothesis implies that $\sigma^{t-1} \geq x_{k+1}^t$, while $\sigma^{t} \geq \sigma^{t-1}$ is guaranteed by Lemma \ref{lem:sigmatBounds}(ii).
\end{proof}

\bigskip

With the comparison principle of Lemma \ref{lem:recursion} in hand, all that is left is to show that the front edges of $g^t$ are increasing, and so more and more of $g^t$ is piecewise linear until $g^t \in \mathcal{T}$ for all $t$ large enough.

\begin{proof}[Proof of Theorem \ref{thm:pwl}]
Recall the vectors $(\bm{x}^t)_{t=0}^\infty$ from Lemma \ref{lem:recursion}. We have that $x_0^t = \omega(g^0) \geq U^0$, and since $\bm{x}^{t+1} = \psi_{\sigma^t}(\bm{x}^t)$ and $\sigma^t$ satisfies the upper bound of \eqref{eq:operatorsSigmaCond}, a quick calculation via \eqref{eq:operators} of Lemma \ref{lem:operators} reveals
\begin{equation}\label{eq:FrontEdge} x_0^{t+1} - x_0^t = \psi_{\sigma^t}(\bm{x}^t)_0 - x_0^t \geq 
\begin{cases} (1-\gamma) \wedge \left(\frac{\beta \gamma}{1 - \beta}\right) & \beta < 1 \\
1-\gamma & \beta \geq 1.
\end{cases}
\end{equation} 
By repeatedly applying \eqref{eq:FrontEdge}, there exists $\tau_1 := \tau_1(\gamma, \beta)$ such that for all $t \geq \tau_1$, 
\begin{equation}\label{eq:U^tTooBig}
x_0^t > U^0 + \frac{\gamma}{1-k\beta}.
\end{equation}
Between $x_{k+1}^t$ and $x_0^t$, the function $f_{\bm{x}^t}$ has slopes $-1 + j\beta$ for $j=0, \ldots, k$, and so 
\begin{equation}\label{eq:pwlbound}
x \in [x_{k+1}^t, x_0^t) \,\, \Longrightarrow \,\, f_{\bm{x}^t}(x) \geq (1-k\beta)(x_0^t - x).
\end{equation}
Since $f_{\bm{x}^t}$ and $g^t$ coincide for all $x \geq U^0$, if $x_{k+1}^t \leq U^0$ for some $t \geq \tau_1$, then \eqref{eq:U^tTooBig} and \eqref{eq:pwlbound} would imply $g^t(U^0) = f_{\bm{x}^t}(U^0) > \gamma$, contradicting $\sup g^t = \gamma$. Thus for all $t \geq \tau_1$, 
\[ x_{k+1}^t > U^0.\]
Since $f_{\bm{x}^t} \geq g^t$, then $\sup f_{\bm{x}^t} \geq \sup g^t = \gamma$. Additionally, since the two functions coincide for $x \geq U^0$ and $f_{\bm{x}^t}$ reaches its maximum at $x_{k+1}^t$, we see that for all $t \geq \tau_1$, 
\begin{equation}\label{eq:pwlbound2}
\sup f_{\bm{x}^t} = f_{\bm{x}^t}(x_{k+1}^t) = g(x_{k+1}^t) \leq \gamma,
\end{equation}
thus $\sup f_{\bm{x}^t} = \gamma$ and $f_{\bm{x}^t} \in \mathcal{T}$ for all $t \geq \tau_1$. 

\medskip

Let $t \geq \tau_1$ for the rest of the proof. From Lemma \ref{lem:recursion}, $\sigma^t$ satisfies the lower bound of \eqref{eq:operatorsSigmaCond}. Therefore $\sigma^t \geq x_{k+1}^t$, and since $U^0 < x_{k+1}^t$, 
\begin{equation}\label{eq:U^tTooBig2}
g^{t+1}(U^0) \leq \pi\left(r(g^t)(U^0) + \beta(U^0 - \sigma^t)\right) \leq \pi\left(1 + \beta(U^0 - \sigma^t)  \right)\leq \pi\left(1 + \beta(U^0 - x_{k+1}^t)\right).
\end{equation}
From \eqref{eq:pwlbound} and \eqref{eq:pwlbound2}, we conclude that
\[ x_{k+1}^t \geq x_0^t- \frac{\gamma}{1-k\beta}.\]
By once more repeatedly applying \eqref{eq:FrontEdge}, the above inequality yields that for all $t \geq t_\ell$ for some $t_\ell := t_\ell(\gamma, \beta) \geq \tau_1$, 
\begin{equation}\label{eq:x_{k+1}^tTooBig}
x_{k+1}^t > U^0 + \frac{1}{\beta}.
\end{equation}
Substituting \eqref{eq:x_{k+1}^tTooBig} into \eqref{eq:U^tTooBig2}, $f_{\bm{x}^t}(U^0) = g^t(U^0) = -\infty$ for all $t \geq t_\ell$. Since $f_{\bm{x}^t} \in \mathcal{C}$ is concave, this implies that $f_{\bm{x}^t}(x) = -\infty$ for all $x < U^0$, and since $g^t \leq f_{\bm{x}^t}$, we see that the two functions coincide on the entire real line for all $t \geq t_\ell$. Since $f_{\bm{x}^t} \in \mathcal{T}$, so is $g^t \in \mathcal{T}$. 
\end{proof}

\section{Traveling Wave Solutions}\label{sec:tws}

For a non-increasing vector $\bm{x} \in \R^{K+1}$, recall the parameters $k$ and $K$ from \eqref{eq:defK}, the definition of $f_{\bm{x}}$ from Definition \ref{def:T}, as well as the definition of $\mathcal{T}$ from Definition \ref{def:Tgamma}. For $g = f_{\bm{x}} \in \mathcal{T}$, we define
\begin{equation}\label{eq:defv(g)}
\bm{v}(g) := (x_1 - x_2, \ldots, x_k - x_{k+1}) \in \R^k.
\end{equation}
For $j=1,\ldots,k$, \, $\bm{v}(g)_j$ consists of the linear segment of $g$ with slope $-(1-j\beta)$, and by \eqref{eq:TSup}, the length of the linear segment with slope $-1$ is given by 
\begin{equation}\label{eq:v0}
x_0 - x_1 = \gamma - \left\langle \bm{u}, \bm{v}(g) \right\rangle, \qquad \text{where} \qquad \bm{u} := (1-j\beta)_{j=1}^k.
\end{equation}
Since all other slopes of $g$ are increasing, $\bm{v}(g)$ and \eqref{eq:v0} describe $g$ from the point where it reaches its maximum $\gamma$ to its upper edge. 
We also note by \eqref{eq:Samexk+1}, that $\bm{v}(g)$ is uniquely defined even if there are potentially several $\bm{x}$'s such that $g = f_{\bm{x}}$. 
When $\beta \geq 1$ (and so $k =0$), then $\bm{v}(g)$ is simply the empty vector; we will assume in this case that $\left\langle \bm{u}, \bm{v}(g) \right\rangle = 0$, and so \eqref{eq:v0} simplifies to $x_0 - x_1= \gamma$.

\bigskip

As is stated in the following lemma, any traveling wave solution must be a piecewise linear function $g \in \mathcal{T}$ for which $\bm{v}(g)$ is invariant under the operation $s \circ r$.

\begin{lem}\label{lem:operatorsFixedPoint}
If $(g, \nu)$ is a traveling wave solution, then $g \in \mathcal{T}$ and $\bm{v}(s \circ r (g)) = \bm{v}(g)$. 
\end{lem}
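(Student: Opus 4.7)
The plan is to exploit the fact that being a traveling wave solution is equivalent to being a fixed point of $s \circ r$ up to translation, and then invoke Theorem~\ref{thm:pwl} together with the translation-invariance of both the class $\mathcal{T}$ and the operator $\bm{v}$. The argument has essentially three bookkeeping steps: (i) forcing the normalisation $\sup g = \gamma$ from the wave relation; (ii) pulling the conclusion of Theorem~\ref{thm:pwl} back along a translation to place $g$ itself in $\mathcal{T}$; (iii) observing that $\bm{v}$ is translation-invariant.

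For step (i), I would instantiate the identity $g^{t+1}(x+t\nu) = g(x)$ at $t=0$ to obtain $g^1 = s \circ r(g) = g(\,\cdot\, - \nu)$, and more generally $g^t = g(\,\cdot\, - t\nu)$ for every $t \geq 0$. The sampling operator $s_{\sigma^0}$ is constructed, via the choice of $\sigma^0 = \sigma^\ast(r(g))$, precisely so that $\sup g^1 = \gamma$; since $g^1$ is a translate of $g$, this forces $\sup g = \gamma$. Combined with the standing convention on traveling waves that $\mathrm{Supp}(g)$ is bounded (so that $U(g)$ is well-defined), $g$ lies in the broader class of profiles to which Theorem~\ref{thm:pwl} applies via the remark following Definition~\ref{def:Tgamma}.

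For step (ii), Theorem~\ref{thm:pwl} produces a finite $t_\ell$ with $g^{t_\ell} \in \mathcal{T}$, so $g^{t_\ell} = f_{\bm{x}}$ for some non-increasing $\bm{x} \in \R^{K+1}$. The relation $g^{t_\ell}(x) = g(x - t_\ell \nu)$ then yields $g(x) = f_{\bm{x}}(x + t_\ell \nu)$. A direct change of variable $z \mapsto z - t_\ell \nu$ in the integral definition~\eqref{eq:defT} shows that $f_{\bm{x}}(\,\cdot\, + c) = f_{\bm{x} - c \mathbf{1}}$ for any $c \in \R$, where $\mathbf{1}$ denotes the all-ones vector of the appropriate dimension. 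Applied with $c = t_\ell \nu$, this gives $g = f_{\bm{y}}$ with $\bm{y} := \bm{x} - t_\ell \nu \mathbf{1}$, which is still non-increasing. Together with $\sup g = \gamma$, this proves $g \in \mathcal{T}$.

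For step (iii), using $s \circ r(g) = g(\,\cdot\, - \nu)$ from step (i) and $g = f_{\bm{y}}$ from step (ii), the same translation identity produces $s \circ r(g) = f_{\bm{y} + \nu \mathbf{1}} \in \mathcal{T}$. Since $\bm{v}(f_{\bm{y}})$ records the successive differences $(y_1 - y_2, \ldots, y_k - y_{k+1})$, it is invariant under a common additive shift of all coordinates, and hence $\bm{v}(s \circ r(g)) = \bm{v}(g)$. One should also note that $\bm{v}$ is unambiguously defined on $\mathcal{T}$ despite the non-uniqueness of the representation $g = f_{\bm{x}}$: by~\eqref{eq:Samexk+1}, any two representations agree on coordinates $0,\ldots,k+1$, which is exactly the data used by $\bm{v}$. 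The main -- and really only -- obstacle is verifying that the hypotheses of Theorem~\ref{thm:pwl} genuinely hold for the traveling wave $g$, which reduces to the supremum check above together with the bounded support convention; no substantial new argument is needed.
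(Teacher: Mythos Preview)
Your proposal is correct and follows essentially the same route as the paper: both deduce $g \in \mathcal{T}$ from Theorem~\ref{thm:pwl} (you are more explicit about checking $\sup g = \gamma$ and pulling the conclusion back along the translation by $t_\ell \nu$), and both read off $\bm{v}(s\circ r(g)) = \bm{v}(g)$ from the traveling-wave identity $s\circ r(g) = g(\cdot - \nu)$. The only difference is cosmetic: the paper routes the last step through Corollary~\ref{cor:operators} and Lemma~\ref{lem:EqClassf}, whereas your direct observation that $\bm{v}$ depends only on the consecutive differences $x_j - x_{j+1}$, and is therefore invariant under a common shift of all coordinates, is slightly cleaner.
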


\begin{proof}
The first part of the lemma follows from Theorem \ref{thm:pwl}. Let $\bm{x}$ be a non-increasing vector such that $g = f_{\bm{x}}$ and let $\sigma^\ast := \sigma^\ast(r(g))$. By Corollary \ref{cor:operators} and the definition of traveling waves, 
\[f_{\psi_{\sigma^\ast}(\bm{x})} = s \circ r (g) = g( \cdot - \nu) = f_{\bm{x}}(\cdot - \nu).\]
The result now follows by Lemma \ref{lem:EqClassf} and the definitions of $\bm{v}(g)$ and $\bm{v}(s \circ r(g))$. 
\end{proof}

In seeking traveling wave solutions, the following lemma implies that we can in fact limit our search to a subclass of functions $g \in \mathcal{T}$.  

\begin{lem}\label{lem:DeltaClasses}
For $g \in \mathcal{T}$, if $\bm{v}(s \circ r (g)) = \bm{v}(g)$, then 
\begin{equation}\label{eq:DeltaClassesBound}
\sigma^\ast := \sigma^\ast(r(g) > U(g) + \left\langle \bm{u}, \bm{v}(g) \right\rangle - \gamma.
\end{equation}
\end{lem}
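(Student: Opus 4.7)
\textbf{Proof plan for Lemma \ref{lem:DeltaClasses}.}

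Write $g=f_{\bm{x}}$ for a non-increasing $\bm{x}\in\R^{K+1}$, so $U(g)=x_0$ and, by the identity \eqref{eq:v0},
\[
x_1 \ =\ x_0-\bigl(\gamma-\langle \bm{u},\bm{v}(g)\rangle\bigr)\ =\ U(g)+\langle \bm{u},\bm{v}(g)\rangle-\gamma.
\]
Thus the desired inequality is exactly $\sigma^\ast>x_1$, and the plan is to rule out every case of the operator $\psi_{\sigma^\ast}$ in Lemma \ref{lem:operators} that forces $\sigma^\ast\leq x_1$.

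First I would sharpen Corollary \ref{cor:operators} to obtain the strict bound $\sigma^\ast>x_{k+1}$. Since $g(x_{k+1})=\gamma$, Lemma \ref{lem:repLin}(ii) gives $r(g)(x_{k+1})=1$, and therefore $s_{x_{k+1}}\!\circ r(g)(x_{k+1})=r(g)(x_{k+1})+\beta(x_{k+1}-x_{k+1})_-=1>\gamma$. Since $\sup s_{\sigma^\ast}\!\circ r(g)=\gamma$, this forces $\sigma^\ast>x_{k+1}$ strictly.

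Next, since $g\in\mathcal{T}$, Corollary \ref{cor:operators} says $\sigma^\ast$ satisfies \eqref{eq:operatorsSigmaCond} and so $s\circ r(g)=f_{\bm{y}}$ with $\bm{y}:=\psi_{\sigma^\ast}(\bm{x})$ given explicitly by \eqref{eq:operators}. Together with $\sigma^\ast>x_{k+1}$, only the following three cases of \eqref{eq:operators} are possible:
\begin{enumerate}
\item[(A)] $\sigma^\ast>\rho(\bm{x})_0=x_0+1-\gamma$. Then $\sigma^\ast>x_0\geq x_1$, and we are done.
\item[(B$_0$)] $x_1<\sigma^\ast\leq x_0$ (i.e.\ $i=0$ in \eqref{eq:operators}). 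Then $\sigma^\ast>x_1$ directly.
\item[(B$_i$)] $x_{i+1}<\sigma^\ast\leq x_i$ for some $i\in\{1,\dots,k\}$. Here I will derive a contradiction from the hypothesis $\bm{v}(s\circ r(g))=\bm{v}(g)$.
\end{enumerate}
The last case is the only real content. Unpacking \eqref{eq:operators}, in case (B$_i$) we have $y_i=x_i$ and $y_{i+1}=\sigma^\ast$, so
\[
\bm{v}(s\circ r(g))_i \ =\ y_i-y_{i+1}\ =\ x_i-\sigma^\ast,\qquad \bm{v}(g)_i\ =\ x_i-x_{i+1}.
\]
The invariance hypothesis forces $\sigma^\ast=x_{i+1}$, contradicting the strict bound $\sigma^\ast>x_{i+1}$ of case (B$_i$). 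Hence (B$_i$) with $i\geq 1$ cannot occur, and only (A) and (B$_0$) remain, both of which give $\sigma^\ast>x_1$.

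The main (minor) obstacle is the boundary value $\sigma^\ast=x_{k+1}$: it is not covered by the strict inequalities in \eqref{eq:operators}, so one has to rule it out by the Lemma~\ref{lem:repLin}(ii) computation above before one can apply the case analysis. Everything else is bookkeeping, using \eqref{eq:v0} at the end to translate $\sigma^\ast>x_1$ into the stated bound $\sigma^\ast>U(g)+\langle \bm{u},\bm{v}(g)\rangle-\gamma$.
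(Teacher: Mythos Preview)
Your argument is correct, and the overall strategy --- reduce to showing $\sigma^\ast>x_1$ and then use the explicit form of $\psi_{\sigma^\ast}(\bm{x})$ to derive a contradiction when $\sigma^\ast\le x_1$ --- is the same as the paper's. The difference is in the witness you choose for $\bm{v}(s\circ r(g))\neq\bm{v}(g)$. You work case by case and compare the $i$-th coordinate $y_i-y_{i+1}=x_i-\sigma^\ast$ with $x_i-x_{i+1}$. The paper instead compares the length of the slope-$(-1)$ segment uniformly over all cases $i\ge 1$: whenever $\sigma^\ast\le x_1=\rho(\bm{x})_1$ one has $\psi_{\sigma^\ast}(\bm{x})_0-\psi_{\sigma^\ast}(\bm{x})_1=x_0+1-\gamma-x_1>x_0-x_1$, and via \eqref{eq:v0} this gives $\langle\bm{u},\bm{v}(s\circ r(g))\rangle\neq\langle\bm{u},\bm{v}(g)\rangle$. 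This single comparison avoids the case split and, in particular, does not require the preliminary sharpening $\sigma^\ast>x_{k+1}$ that you need to make case (B$_k$) go through.

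One small slip: in (B$_0$) the upper bound should be $\rho(\bm{x})_0=x_0+1-\gamma$, not $x_0$; as written your cases (A) and (B$_0$) leave $x_0<\sigma^\ast\le x_0+1-\gamma$ uncovered. This is harmless for the conclusion since either case still gives $\sigma^\ast>x_1$, but it should be corrected.
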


\begin{proof}
Let $g = f_{\bm{x}}$ for a non-increasing vector $\bm{x} \in \R^{K+1}$ and denote $\bm{v} := \bm{v}(g)$. Recalling \eqref{eq:v0}, if \eqref{eq:DeltaClassesBound} does not hold, then
\[ \sigma^{\ast}  \leq x_0 - \gamma + \left\langle \bm{u}, \bm{v} \right\rangle= x_1 = \rho(\bm{x})_1.\]
Therefore, from Lemma \ref{lem:operators} and \eqref{eq:v0} again,
\[ \gamma - \left\langle \bm{u},\bm{v}(s \circ r (g))\right\rangle = \psi_{\sigma^\ast}(\bm{x})_0 - \psi_{\sigma^\ast}(\bm{x})_1 = x_0 + 1 - \gamma - x_1 > x_0 - x_1 = \gamma - \left\langle \bm{u},\bm{v}(g) \right\rangle,\]
and so $\bm{v}(s \circ r (g)) \neq \bm{v}(g)$. 
\end{proof}

\bigskip

We will show in Section \ref{sec:affine} that the mapping $\bm{v} \mapsto \bm{v}(s \circ r (g))$ is a piecewise affine transformation that can be derived from Lemma \ref{lem:operators}. In light of Lemma \ref{lem:DeltaClasses}, we will only prove the transformations for $g \in \mathcal{T}$ satisfying \eqref{eq:DeltaClassesBound}, though we provide without proof the mapping $\bm{v} \mapsto \bm{v}(s \circ r (g))$ in terms of affine transformations for all $g \in \mathcal{T}$ (see Remark \ref{rmk:affine}). 

\medskip

The proof of Theorem \ref{thm:transition} is contained in Section \ref{sec:proofthmTransition}. We prove convergence results for the affine transformations of Section \ref{sec:affine}, the limits of which corresponding to potential vectors $\bm{v}(g)$ of functions $g \in \mathcal{T}$ satisfying $\bm{v}(s \circ r (g)) = \bm{v}(g)$. These limiting vectors inform the parameters $\nu(\gamma, \beta)$ and $\chi(\gamma, \beta)$ of Theorem \ref{thm:transition}

\medskip

After stating the continuity of certain maps in Section \ref{sec:proofthmSpeed}, Theorem \ref{thm:speed} follows by translating the vector convergence results of Section \ref{sec:proofthmTransition} to the convergence of translations of $g^t$ to the traveling wave solution. The continuity lemmas needed have technical proofs that are left to Section \ref{sec:ContinuityLemmas}.

\subsection{Piecewise affine transformations of the front}\label{sec:affine}

From \eqref{eq:TSup}, since $\sup g = \gamma$ for $g \in \mathcal{T}$, we see that $\bm{v}(g)$ must belong to the following subset of $\mathbb{R}^k$:
\begin{equation}\label{eq:defDelta}
\Delta := \left\{ \bm{v} \in \R^k :\: v_i \geq 0 \text{ and } \left\langle \bm{u}, \bm{v} \right\rangle \leq \gamma\right\}, \qquad \text{where } \bm{u} = (1-j\beta)_{j=1}^k.
\end{equation}
We wish to express $\bm{v}(g) \mapsto \bm{v}(s \circ r (g))$ in terms of vector transformations (independent of $g$), and to that end, we start with expressing $\sigma^\ast(r(g)) - U(g)$ in terms of $\bm{v}(g)$ alone. 

\begin{lem}\label{lem:sigma_l}
Recall $\bm{u}$ from \eqref{eq:defDelta} and define the piecewise linear operator $\Sigma : \R^k \to \R$ by
\begin{equation}\label{eq:sigmaV}
\Sigma(\bm{v}) := \begin{cases}
\left\langle \bm{u},\bm{v} \right\rangle - \gamma - \sum_{i=1}^{k-1} v_i - \frac{1-k\beta}{1\wedge\beta} v_{k} + \frac{1 - \gamma}{1\wedge\beta} & \text{ if } \ \  v_{k} < \frac{1-\gamma}{1-k\beta}, \\ \\
\left\langle \bm{u},\bm{v} \right\rangle - \gamma - \sum_{i=1}^{k} v_i + \frac{1 - \gamma}{1-k\beta} & \text{ otherwise}.
\end{cases}
\end{equation}
For all $g \in \mathcal{T}$,
\begin{equation}
\label{eq:defsigma_l}
\sigma^*(r(g))  = U(g) + \Sigma(\bm{v}(g)). 
\end{equation}
\end{lem}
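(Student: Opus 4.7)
The plan is to locate $\sigma^*:=\sigma^*(r(g))$ explicitly by analysing $\sup s_\sigma(r(g))$ as $\sigma$ varies, using Corollary~\ref{cor:operators} to translate this supremum into $\sup f_{\psi_\sigma(\bm{x})}$, which by the proof of Theorem~\ref{thm:main} must equal $\gamma$ at the unique value $\sigma=\sigma^*$. The central computational identity (Abel summation applied to~\eqref{eq:TSup}) is
\[\sup f_{\bm{y}} \;=\; y_0 \;-\; (1-k\beta)\,y_{k+1} \;-\; \beta\sum_{j=1}^k y_j\]
for any non-increasing $\bm{y}$. Throughout I would work in coordinates $(\bm{v},x_0)$, using \eqref{eq:v0} in the form $x_0-x_1=\gamma-\langle\bm{u},\bm{v}\rangle$ together with the telescoped identities
\[x_k-x_0 \;=\; \langle\bm{u},\bm{v}\rangle-\gamma-\sum_{j=1}^{k-1}v_j,\qquad x_{k+1}-x_0 \;=\; \langle\bm{u},\bm{v}\rangle-\gamma-\sum_{j=1}^k v_j.\]

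In Case (i), when $v_k\geq(1-\gamma)/(1-k\beta)$, I would apply Lemma~\ref{lem:operators} with $i=k$, so $\sigma\in(x_{k+1},x_k]$ and $\psi_\sigma(\bm{x})=(\rho(\bm{x})_0,x_1,\ldots,x_k,\sigma,x_{k+1},\ldots,x_{K-1})$. Substituting into the Abel identity and simplifying via \eqref{eq:v0} collapses everything to the clean formula $\sup f_{\psi_\sigma(\bm{x})}=1-(1-k\beta)(\sigma-x_{k+1})$, affine and strictly decreasing in $\sigma$. Setting this equal to $\gamma$ gives $\sigma^*=x_{k+1}+(1-\gamma)/(1-k\beta)$; the hypothesis $v_k\geq(1-\gamma)/(1-k\beta)$ is precisely what ensures $\sigma^*\leq x_k$. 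Translating via the telescoped formula above yields the second branch of~\eqref{eq:sigmaV}.

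In Case (ii), when $v_k<(1-\gamma)/(1-k\beta)$, one has $\sigma^*>x_k$. The claim is that $\sigma\mapsto\sup s_\sigma(r(g))$ remains affine on $(x_k,\infty)$, with slope $-(1\wedge\beta)$ and value $r(g)(x_k)$ at $\sigma=x_k$. Geometrically, for $\beta\leq 1$ (so $k\geq 1$) the maximiser of $s_\sigma(r(g))$ stays pinned at $x_k$ once $\sigma$ passes it, because adding $\beta$ to every slope of $r(g)$ to the left of $\sigma$ leaves the slopes on $[x_{k+1},x_k]$ positive and those on $[x_k,\sigma]$ negative of magnitude $1-k\beta$; whereas for $\beta>1$ (so $k=0$), the maximiser is at $\sigma^-$, travelling along the slope-$(-1)$ segment of $r(g)$ on $[x_0,\rho(\bm{x})_0]$. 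In both cases the rate of decay of the supremum is $1\wedge\beta$. Since $r(g)(x_k)=1-(1-k\beta)v_k$ for $k\geq 1$ (by \eqref{eq:rForT} and \eqref{eq:v0}), and $r(g)(x_0)=1-\gamma$ for $k=0$, solving $r(g)(x_k)-(1\wedge\beta)(\sigma-x_k)=\gamma$ and then adding $x_k-x_0$ gives the first branch of~\eqref{eq:sigmaV}, with the convention $v_0:=0$ absorbing the $k=0$ case.

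The main obstacle is Case (ii): a priori, as $\sigma$ crosses the break-points $x_{k-1},x_{k-2},\ldots$, or even $\rho(\bm{x})_0$ when $\beta<1$, the piecewise description of $\psi_\sigma(\bm{x})$ in~\eqref{eq:operators} changes, and one must verify that the formula for $\sup f_{\psi_\sigma(\bm{x})}$ does not ``break'' at these transitions. The cleanest route is the geometric argument above—tracking the maximiser and the decay rate of the supremum directly on $s_\sigma(r(g))$—since this is manifestly independent of how many break-points of $r(g)$ the value $\sigma$ has crossed; alternatively, one could grind through the Abel-sum expressions for each case $i\in\{0,\ldots,k-1\}$ (and the $\sigma>\rho(\bm{x})_0$ case of Lemma~\ref{lem:operators}) and observe, via cancellations in $\sum_{j=1}^k y_j$ after Abel summation, that the same linear equation in $\sigma$ recurs in every subcase.
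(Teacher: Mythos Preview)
Your proposal is correct and arrives at the same two equations as the paper, but it is more elaborate than necessary. The paper's proof is very direct: since $s\circ r(g)=f_{\psi_{\sigma^*}(\bm{x})}\in\mathcal{T}$ by Corollary~\ref{cor:operators}, its supremum is attained at $\psi_{\sigma^*}(\bm{x})_{k+1}$, which by Lemma~\ref{lem:operators} equals $x_k$ when $\sigma^*>x_k$ and equals $\sigma^*$ when $\sigma^*\in(x_{k+1},x_k]$. One then simply writes $\gamma=r(g)(x_k)+\beta(x_k-\sigma^*)$ in the first case and $\gamma=r(g)(\sigma^*)$ in the second, using $g(x)=\gamma-(1-k\beta)(x-x_{k+1})$ on $(x_{k+1},x_k]$, and solves. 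The case distinction is then verified \emph{a posteriori} to coincide with $v_k\lessgtr(1-\gamma)/(1-k\beta)$. Your Abel-summation identity is a valid computational alternative in Case~(i) but not needed, and your ``main obstacle'' in Case~(ii)---that $\sigma$ might cross $x_{k-1},x_{k-2},\ldots$---never arises in the paper's argument, because the paper does not vary $\sigma$ continuously: it plugs in $\sigma=\sigma^*$ once, reads off the location of the maximiser from $\psi_{\sigma^*}(\bm{x})_{k+1}$, and solves. Your geometric observation that the maximiser stays pinned at $x_k$ once $\sigma>x_k$ is exactly what the paper uses, just packaged via Lemma~\ref{lem:operators} rather than by inspecting slopes.
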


\begin{proof}
Let $\bm{x}\in\R^{K+1}$ such that $f_{\bm{x}}=g$, and let $\sigma^\ast := \sigma^\ast(r(g))$. From Corollary \ref{cor:operators}, we have $s \circ r (g) = f_{\psi_{\sigma^\ast}(\bm{x})} \in \mathcal{T}$.\\

We start with $k=0$ (and so $\beta \geq 1$), then $\bm{v}$ is empty and both cases of \eqref{eq:sigmaV} simplify to $1 - 2\gamma$. By \eqref{eq:v0}, $\gamma = x_0 - x_1$ and 
\[
\gamma = \psi_{\sigma^\ast}(\bm{x})_0 - \psi_{\sigma^\ast}(\bm{x})_1 = x_0 + 1 - \gamma - \sigma^\ast.
\]
Solving for $\sigma^\ast$ yields $\sigma^\ast = 1 - 2\gamma$, completing the argument in this case.\\

For the remainder of the proof, assume $\beta \in (0,1)$. Recalling \eqref{eq:TSup}, $g$ achieves its maximum of $\gamma$ at $x_{k+1}$ and $s \circ r (g))$ achieves its maximum at $\psi_{\sigma^\ast}(\bm{x})_{k+1}$. Since $g$ has slope $-(1-k\beta)$ on $(x_{k+1}, x_k)$,
\begin{equation}\label{eq:sigmafDef} 
x \in (x_{k+1}, x_k] \,\, \Longrightarrow \,\, g(x) = \gamma - (1-k\beta)(x - x_{k+1}).
\end{equation}
By Lemma \ref{lem:operators}, if $\sigma^\ast > x_{k}$, then $\psi_{\sigma^\ast}(\bm{x})_{k+1} = x_k$ and by using Lemma \ref{lem:repLin}(ii) to calculate $r(g)(x_k)$,
\begin{equation}\label{eq:defsigma_2case1}
\gamma = s \circ r (g)(x_k) = r(g)(x_k) + \beta(x_k - \sigma^\ast) = 1-\gamma + g(x_k) + \beta(x_k - \sigma^\ast),
\end{equation}
while if $\sigma^\ast \in (x_{k+1}, x_k]$, then $\psi_{\sigma^\ast}(\bm{x})_{k+1} = \sigma^\ast$ and
\begin{equation}\label{eq:defsigma_2case2}
\gamma = s\circ r (g)(\sigma^\ast) = r(g)(\sigma^{\ast}) = 1 - \gamma +g(\sigma^\ast).
\end{equation}
Substituting \eqref{eq:sigmafDef} into \eqref{eq:defsigma_2case1} and \eqref{eq:defsigma_2case2}, and solving for $\sigma^\ast$ yields
\begin{equation}\label{eq:defsigma_2}
\sigma^\ast = \begin{cases}
x_k - \frac{1-k\beta}{\beta}(x_k - x_{k+1}) + \frac{1 - \gamma}{\beta} & \mbox{if} \ \  \sigma^\ast > x_{k} \\
x_{k+1} + \frac{1 - \gamma}{1-k\beta} & \mbox{if} \ \ \sigma^\ast \leq x_k.
\end{cases}
\end{equation}
By evaluating both cases of \eqref{eq:defsigma_2}, we see that
\begin{align*}
\sigma^\ast > x_k \,\, &\Longrightarrow \,\, x_k - x_{k+1} = \frac{\beta}{1-k\beta}\left(\sigma^\ast - x_k + \frac{1-\gamma}{\beta}\right) < \frac{1-\gamma}{1-k\beta}, \\
\sigma^\ast \leq x_k \,\, &\Longrightarrow \,\, x_k - x_{k+1} \geq \sigma^\ast - x_{k+1} = \frac{1-\gamma}{1-k\beta}.
\end{align*}
Thus $v_k = x_k - x_{k+1} < \frac{1-\gamma}{1-k\beta}$ if and only if $\sigma^\ast > x_k$. From \eqref{eq:defv(g)} and \eqref{eq:v0},
\begin{equation}\label{eq:xj=sumvj}
\text{for } i \geq 1, \qquad x_i = x_0 - (\gamma - \left\langle \bm{u}, \bm{v}\right\rangle) - \sum_{j=1}^{i-1} v_i,
\end{equation}
and replacing $x_j$ with \eqref{eq:xj=sumvj} in \eqref{eq:defsigma_2} completes the proof. 
\end{proof}

From Lemma \ref{lem:DeltaClasses} and applying Lemma \ref{lem:sigma_l} to express $\sigma^\ast(r(g))$, we see that if $g \in \mathcal{T}$ satisfies $\bm{v}(s \circ r (g)) = \bm{v}(g)$, then 
\begin{gather}\label{eq:DeltaClasses}
\begin{aligned}
\bm{v}(g) &\in \Delta_{-1} := \Big\{ \bm{v} \in \Delta :\: 1 - \gamma < \Sigma(\bm{v})\Big\}, \qquad \text{ or }\\
\bm{v}(g) & \in \, \Delta_0 \,\,\,:= \Big\{ \bm{v} \in \Delta :\: \left\langle \bm{u},\bm{v} \right\rangle - \gamma < \Sigma(\bm{v}) \leq 1 - \gamma\Big\}.
\end{aligned}
\end{gather}
Thus we focus on expressing the transformation $\bm{v}(g) \to \bm{v}(s \circ r (g))$ as piecewise linear transformations when $\bm{v}(g) \in \Delta_{-1} \cup \Delta_0$. Define the matrices
\[ A_{-1} := \left(\begin{array}{ccccc}
\frac{\beta}{1-\beta} & \frac{\beta}{1-\beta} & \cdots & \frac{\beta}{1-\beta} & \frac{1-k\beta}{1-\beta} \\ 
1 & 0 & \cdots & 0 & 0 \\
\vdots & & \ddots & & \vdots \\
0 & 0 & \cdots & 1 & 0
\end{array}\right)
\qquad 
A_{0} := \left(\begin{array}{ccccc}
-1 & -1 & \cdots & -1 & \frac{-(1-k\beta)}{\beta} \\
1 & 0 & \cdots & 0 & 0 \\
\vdots && \ddots && \vdots \\
0 & 0 & \cdots & 1 & 0
\end{array}\right),\]
and let $\left\{ \bm{e}_i \right\}_{i=1}^k$ be the standard basis of $\R^k$. 

\begin{lem}\label{lem:affine}
For $g \in \mathcal{T}$, let $\bm{v} := \bm{v}(g)$. 
\begin{itemize}
\item[(i)] If $\bm{v} \in \Delta_{-1}$, then 
\begin{equation}\label{eq:v+1-1}
\bm{v}(s \circ r(g)) = A_{-1}\bm{v} + \frac{\gamma - \left\langle \bm{u},\bm{v} \right\rangle}{1 - \beta}\bm{e}_1 \,\, \text{ and } \,\,
U(s \circ r (g)) = U(g) + \frac{1 - \gamma - \beta\Sigma(\bm{v})}{1-\beta}.
\end{equation}
\item[(ii)] If $\bm{v} \in \Delta_0$, then 
\begin{equation}\label{eq:v+10}
\bm{v}(s \circ r (g)) = A_0 \bm{v} + \frac{1-\gamma}{\beta}\bm{e}_1, \,\,\text{ and } \,\,
U(s \circ r (g))= U(g)+ 1 - \gamma.
\end{equation}
\end{itemize}
\end{lem}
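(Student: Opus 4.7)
The plan is to derive both identities by combining Lemma~\ref{lem:sigma_l} with the explicit formula for $\psi_\sigma$ in Lemma~\ref{lem:operators}. Fix any non-increasing $\bm{x} \in \R^{K+1}$ with $g = f_{\bm{x}}$, write $\bm{v} := \bm{v}(g)$ and $\sigma^\ast := \sigma^\ast(r(g))$. By Lemma~\ref{lem:sigma_l} one has $\sigma^\ast = x_0 + \Sigma(\bm{v})$, and by Corollary~\ref{cor:operators} the selection step produces $s \circ r(g) = f_{\bm{y}}$ with $\bm{y} := \psi_{\sigma^\ast}(\bm{x})$. Since $\rho(\bm{x})_0 = x_0 + 1-\gamma$ and $\rho(\bm{x})_1 = x_1 = x_0 - (\gamma - \langle \bm{u},\bm{v}\rangle)$, the assumption $\bm{v}\in\Delta_{-1}$ translates to $\sigma^\ast > \rho(\bm{x})_0$ (the first branch of~\eqref{eq:operators}), while $\bm{v}\in\Delta_0$ translates to $\rho(\bm{x})_1 < \sigma^\ast \leq \rho(\bm{x})_0$ (the second branch of~\eqref{eq:operators} with $i=0$).

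Once the relevant branch is identified, the proof reduces to reading off the entries of $\bm{y}$ and taking successive differences. In case~(i) one obtains $y_0 = y_1 = (x_0 + 1-\gamma - \beta\sigma^\ast)/(1-\beta)$ and $y_j = x_{j-1}$ for $j\geq 2$; substituting $\sigma^\ast = x_0 + \Sigma(\bm{v})$ immediately gives $U(s\circ r(g)) = y_0 = x_0 + (1-\gamma-\beta\Sigma(\bm{v}))/(1-\beta)$. The equalities $y_j - y_{j+1} = v_{j-1}$ for $j\geq 2$ account for the shift rows of $A_{-1}$, and the leading coordinate $y_1 - x_1$, after substituting the first branch of~\eqref{eq:sigmaV} for $\Sigma(\bm{v})$, collapses algebraically to $(A_{-1}\bm{v})_1 + (\gamma-\langle\bm{u},\bm{v}\rangle)/(1-\beta)$. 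Case~(ii) is strictly analogous: $y_0 = x_0 + 1-\gamma$, $y_1 = \sigma^\ast$, and $y_j = x_{j-1}$ for $j\geq 2$, giving $U(s \circ r(g)) = U(g) + 1-\gamma$ and $y_1 - y_2 = \Sigma(\bm{v}) + \gamma - \langle\bm{u},\bm{v}\rangle$, which again collapses to $(A_0\bm{v})_1 + (1-\gamma)/\beta$ once the first branch of~\eqref{eq:sigmaV} is used.

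The one delicate point, and the main obstacle to a completely mechanical computation, is that~\eqref{eq:sigmaV} is itself piecewise, with the branch determined by $v_k$ versus $(1-\gamma)/(1-k\beta)$, whereas the case split of the present lemma is governed by $\Sigma(\bm{v})$. To justify always substituting the first branch of~\eqref{eq:sigmaV} in the computations above, I will verify separately that $\bm{v}\in\Delta_0$ with the second branch of~\eqref{eq:sigmaV} would force $\sum_i v_i \geq v_k \geq (1-\gamma)/(1-k\beta)$, contradicting $\Sigma(\bm{v}) > \langle\bm{u},\bm{v}\rangle - \gamma$; and that $\bm{v}\in\Delta_{-1}$ combined with strict $v_k > (1-\gamma)/(1-k\beta)$ would force $k\beta > 1$, contradicting $k\beta < 1$ from~\eqref{eq:defK}. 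At the boundary $v_k = (1-\gamma)/(1-k\beta)$ the two branches of $\Sigma$ agree, so continuity closes the remaining gap.
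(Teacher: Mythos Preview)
Your proposal is correct and follows essentially the same route as the paper's proof: identify which branch of $\psi_{\sigma^\ast}$ in Lemma~\ref{lem:operators} applies via Lemma~\ref{lem:sigma_l}, read off the entries of $\bm{y}=\psi_{\sigma^\ast}(\bm{x})$, and take successive differences to recover $\bm{v}(s\circ r(g))$. The paper carries out exactly these computations, arriving at the same intermediate expressions you describe.

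Where you differ is in the explicit treatment of the piecewise nature of $\Sigma$. The paper's proof silently substitutes the first branch of~\eqref{eq:sigmaV} when simplifying $\bm{v}(s\circ r(g))_1$, without arguing that $\bm{v}\in\Delta_{-1}\cup\Delta_0$ forces $v_k$ into that branch. Your added verification closes this gap; your argument for $\Delta_0$ is clean, and for $\Delta_{-1}$ the contradiction you sketch does go through (one small correction: the inequality chain does not always reduce to ``$k\beta>1$'' per se, but in all sub-cases one reaches either $0>\gamma$ or $k\beta>1$, both impossible). The boundary-continuity remark handles the remaining equality case. So your argument is, if anything, slightly more complete than the paper's at this point.
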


\begin{proof}
Let $g = f_{\bm{x}}$ and $\sigma^\ast := \sigma^\ast(r(g))$. From Lemma \ref{lem:sigma_l}, the case $\bm{v}(g) \in \Delta_{-1}$ is equivalent to $\rho(\bm{x})_0 < \sigma^\ast$, and the case $\bm{v}(g) \in \Delta_0$ is equivalent to $\rho(\bm{x})_1 < \sigma^\ast \leq \rho(\bm{x})_0$. We apply Corollary \ref{cor:operators} and Lemma \ref{lem:operators}. In both cases, we see that $\psi_{\sigma^\ast}(\bm{x})_{j+1} = x_j$ for $j=1,\ldots,k$, and so 
\[\text{ for } j=2,\ldots, k, \qquad  \bm{v}(s \circ r (g))_{j} = \psi_{\sigma^\ast}(\bm{x})_j - \psi_{\sigma^\ast}(\bm{x})_{j+1}= x_{j-1} - x_j = \bm{v}(g)_{j-1}.\]
For $\bm{v}(s \circ r (g))_1 = \psi_{\sigma^\ast}(\bm{x})_1 - \psi_{\sigma^\ast}(\bm{x})_2$, if $\rho(\bm{x})_0 < \sigma^\ast$, then
\[
\bm{v}(s \circ r (g))_1
= \frac{x_0 + 1 - \gamma - \beta(x_0 + \Sigma(\bm{v}))}{1-\beta} - x_1 = \frac{\gamma - \left\langle \bm{u},\bm{v}(g) \right\rangle}{1-\beta} + \frac{\beta}{1-\beta}\sum_{i=1}^{k-1} v_i + \frac{1-k\beta}{1-\beta}v_k,\]
and if $\rho(\bm{x})_1 < \sigma^\ast \leq \rho(\bm{x})_0$, then 
\[ \bm{v}(s \circ r (g))_1
= \sigma^\ast - x_1 = x_0 - x_1 + \Sigma(\bm{v}) = -\sum_{i=1}^{k-1} v_i - \frac{1-k\beta}{\beta}v_k + \frac{1-\gamma}{\beta}.\]
Finally the results for $U(s \circ r (g)) - U(g)$ follow directly from $\psi_{\sigma^\ast}(\bm{x})_0 - x_0$.
\end{proof}

\begin{rmk}\label{rmk:affine}
The affine transformations of Lemma \ref{lem:affine} are the only ones needed for future proofs, but it may be of interest to the reader that affine transformations analogous to those in Lemma \ref{lem:affine} exist if $g \in \mathcal{T}$ and $\bm{v}(g)$ belongs to neither $\Delta_{-1}$ nor $\Delta_0$. We partition $\Delta$ into $\Delta = \Delta_{-1} \cup \Delta_0 \cup \Delta_1 \cup \cdots \cup \Delta_k$ by further defining
\[\text{ for } i=1,\ldots,k, \qquad \Delta_i := \left\{ \bm{v} \in \Delta :\:  - \sum_{j=1}^i v_i < \Sigma(\bm{v}) - \left\langle \bm{u}, \bm{v} \right\rangle + \gamma \leq - \sum_{j=1}^{i-1}v_i\right\}.\]
For $i=1,\ldots,k-1$, let $I_i \in \R^{i \times i}$ be the identity matrix and define the $k \times k$ matrix $A_i$ by
\[ A_i := \left(\begin{array}{cc}
I_{i-1} & 0 \\
0 & M_i \end{array} \right)
\qquad \text{where} \qquad 
M_i := \left(\begin{array}{cccccc}
1 & 1 & 1 & \cdots & 1 & \frac{1-k\beta}{\beta} \\
0 & -1 & -1 & \cdots & -1 & \frac{-(1-k\beta)}{\beta} \\
0 & 1 & 0 & \cdots & 0 & 0 \\
0 & 0 & 1 & \cdots & 0 & 0 \\
& \vdots & & \ddots & & \vdots \\
0 & 0 & 0 & \cdots & 1 & 0
\end{array}\right).\]
From case by case calculations using Lemmas \ref{lem:operators} and \ref{lem:sigma_l}, one may show that if $\bm{v}(g) \in \Delta_i$ for $i=1,\ldots,k-1$, then
\[
\bm{v}(s \circ r(g)) = A_i \bm{v} + \frac{1-\gamma}{\beta}\left(\bm{e}_{i+1} - \bm{e}_i\right)
\,\, \text{ and } U(s \circ r (g)) = U(g) + 1 - \gamma,\]
while if $\bm{v}(g) \in \Delta_k$, then 
\[
\bm{v}(s \circ r(g)) = I_k\bm{v} -\frac{1-\gamma}{1-k\beta}\bm{e}_k \,\, \text{ and } U(s \circ r (g)) = U(g) + 1 - \gamma.
\]
\end{rmk}

\subsection{Proof of Theorem \ref{thm:transition}}\label{sec:proofthmTransition}

The aim of this section is to prove the following proposition which establishes the existence and uniqueness of a traveling wave solution; the proof of Theorem \ref{thm:transition} will then follow as a consequence.

\begin{prop}\label{prop:unique}
 There is a  unique traveling wave solution $(G,\nu)$, which is given by
 \begin{equation}\label{eq:defy}
 G = f_{\bm y}, \ \ \text{ where }
\bm{y} := \left( 0,\left(-(\chi\vee 0) - (j-1)\nu)\right)_{j=1}^K\right),
\end{equation}
and $\nu = \nu(\gamma,\beta)$, $\chi = \chi(\gamma,\beta)$ are defined in the statement of Theorem \ref{thm:transition}.
\end{prop}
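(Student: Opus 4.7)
My plan is to reduce the existence and uniqueness of a traveling wave to a finite-dimensional fixed-point problem on $\bm{v}(G)\in\R^k$, solve it in each of the two candidate regions, and then recover the full profile $\bm{y}$ from the shift structure of $\psi_\sigma$. By Lemma~\ref{lem:operatorsFixedPoint}, any traveling wave solution has the form $G=f_{\bm{x}}\in\mathcal{T}$ with $\bm{v}(s\circ r(G))=\bm{v}(G)$, and by Lemma~\ref{lem:DeltaClasses} the vector $\bm{v}(G)$ must in fact lie in $\Delta_{-1}\cup\Delta_0$ (see~\eqref{eq:DeltaClasses}). Thus it suffices to solve the two affine fixed-point equations from Lemma~\ref{lem:affine}, check which region each solution lies in, and translate the resulting vector back into an explicit piecewise linear profile.

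The fixed-point computation is essentially the same in both regions: rows $2,\dots,k$ of both $A_{-1}$ and $A_0$ act as the shift $v_j\mapsto v_{j-1}$, forcing any fixed point to be a constant vector $\bm{v}=(v,\dots,v)$, and substituting this ansatz into the first row reduces each system to a scalar equation. Using the identity $\sum_{j=1}^k(1-j\beta)=\tfrac{k(2-(k+1)\beta)}{2}$, the $\Delta_{-1}$ equation simplifies to $\gamma=v\sum_{j=1}^k(1-j\beta)$, giving $v=\tfrac{2\gamma}{k(2-(k+1)\beta)}$, while the $\Delta_0$ equation directly yields $v=1-\gamma$. Evaluating $\Sigma$ from Lemma~\ref{lem:sigma_l} on a constant vector then pins down which region each candidate actually lives in: the $\Delta_{-1}$ candidate belongs to $\Delta_{-1}$ iff $v<1-\gamma$, which is algebraically equivalent to $\gamma<\gamma_c(\beta)$, and the $\Delta_0$ candidate $v=1-\gamma$ belongs to $\Delta_0$ iff $\chi(\gamma,\beta)\geq 0$, equivalent to $\gamma\geq\gamma_c(\beta)$. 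Hence for every admissible pair $(\gamma,\beta)$ exactly one region contains a (necessarily unique) fixed point.

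Finally, I would reconstruct the full $\bm{y}$ from $\bm{v}(G)=(v,\dots,v)$, translation normalization $y_0=U(G)=0$, and the shift structure in Lemma~\ref{lem:operators}. From~\eqref{eq:v0}, the top $(-1)$-slope segment has length $\gamma-\langle\bm{u},\bm{v}\rangle$, which a short computation shows equals $0$ in the luckiest regime and $\chi$ in the fittest regime, giving $y_1=-(\chi\vee 0)$. Comparing $\psi_{\sigma^\ast}(\bm{y})$ with the translate $\bm{y}+\nu\mathbf{1}$ via the equivalence of Lemma~\ref{lem:EqClassf}, the shift $x_j\mapsto x_{j-1}$ in $\psi_\sigma$ forces $y_{j-1}-y_j=\nu$ for every $j\geq 2$, including the lower breakpoints $y_{k+2},\dots,y_K$, so $\bm{y}$ is exactly~\eqref{eq:defy}. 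The speed $\nu=U(s\circ r(G))-U(G)$ is read off from the second parts of~\eqref{eq:v+1-1} and~\eqref{eq:v+10} and equals $v$ in both regimes, matching Theorem~\ref{thm:transition}. I expect the main (albeit minor) obstacle to be bookkeeping around degenerate cases: $k=0$ (where $\Delta_{-1}$ is empty and only the fittest regime survives), the $\beta\geq 1$ regime (where $1\wedge\beta=1$ modifies the first case of $\Sigma$), and the boundary $\gamma=\gamma_c(\beta)$ where both computations must agree.
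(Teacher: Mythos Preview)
Your proposal is correct and follows the same overall architecture as the paper: reduce via Lemmas~\ref{lem:operatorsFixedPoint} and~\ref{lem:DeltaClasses} to a fixed-point problem on $\Delta_{-1}\cup\Delta_0$, solve the two affine equations from Lemma~\ref{lem:affine}, check region membership through $\Sigma$, and rebuild $\bm{y}$ from the shift structure of $\psi_\sigma$.

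The one genuine difference is how you establish existence and uniqueness of the fixed points. The paper routes this through the convergence Lemmas~\ref{lem:AF}--\ref{lem:A0} (Perron--Frobenius for $A_{-1}$, spectral radius via Enestr\"om--Kakeya for $A_0$) together with Lemma~\ref{lem:dis}, so uniqueness comes as a byproduct of convergence of iterates. You instead observe directly that rows $2,\dots,k$ of both matrices are shifts, forcing any fixed point to be a constant vector, and then solve a single scalar equation. Your route is more elementary and self-contained for Proposition~\ref{prop:unique} alone, and it even sidesteps the $\beta^{-1}\in\N$ restriction that appears in Lemma~\ref{lem:A0}. The paper's route, on the other hand, is not wasted work: the geometric convergence of Lemmas~\ref{lem:AF}--\ref{lem:A0} is exactly what drives the local stability proof of Theorem~\ref{thm:speed}, so those lemmas would have to be proved anyway. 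Your region-membership check (constant $v$ in $\Delta_{-1}$ iff $v<1-\gamma$, constant $v=1-\gamma$ in $\Delta_0$ iff $\chi\geq 0$) is the content of Lemma~\ref{lem:dis} done inline, and your reconstruction of $\bm{y}$ matches the paper's closing paragraph.
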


We postpone the proof of Proposition \ref{prop:unique} until the end of the section. Recalling Lemmas \ref{lem:operatorsFixedPoint} and \ref{lem:DeltaClasses}, we first seek vectors that are fixed points to the affine transformations of Lemma \ref{lem:affine}. In the following two lemmas, we prove that starting from any non-negative vector $\bm{v} \in \R^k$, after successive applications of \eqref{eq:v+1-1} or \eqref{eq:v+10}, the resulting sequence of vectors converge at geometric rate to a unique fixed point.

\begin{lem}\label{lem:AF}
Let $\beta \in (0,1)$, and define
\[
\bm{\mu}_L := (\nu_L, \nu_L, \ldots, \nu_L) \in \R^k, \text{ where } \nu_L=\frac{\gamma}{\sum_{j=1}^k(1-j\beta)} = \frac{2\gamma}{k(2 - (k+1)\beta)}.
\]
For $\bm{v} \in \R^k$, define $\bm{v}^t$ recursively by 
\begin{equation}\label{eq:A-1rec} \bm{v}^0= \bm{v}, \qquad \text{and} \qquad \forall t\geq 1, \,\, \bm{v}^t = A_{-1}\bm{v}^{t-1} + \frac{\gamma - \left\langle \bm{u},\bm{v}^{t-1} \right\rangle}{1-\beta}\bm{e}_1.
\end{equation}
There exists constants $C > 0$ and $0 < r < 1$ such that for all non-negative $\bm{v} \in \R^{k}$, 
\[\forall t\geq 1, \qquad  ||\bm{v}^t - \bm{\mu}_L||_\infty \leq Cr^t ||\bm{v} - \bm{\mu}_L||_\infty.\]
\end{lem}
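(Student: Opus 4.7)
The plan is to observe that the recursion in~\eqref{eq:A-1rec} is affine, with linear part
\[
T : \bm{w} \longmapsto A_{-1}\bm{w} - \frac{\langle \bm{u},\bm{w}\rangle}{1-\beta}\bm{e}_1,
\]
and to reduce the statement to the spectral bound $\rho(T) < 1$.

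First, I would verify that $\bm{\mu}_L$ is a fixed point of~\eqref{eq:A-1rec}. By the very definition of $\nu_L$ one has $\langle \bm{u},\bm{\mu}_L\rangle = \nu_L \sum_{j=1}^k(1-j\beta) = \gamma$, so the affine term $\frac{\gamma-\langle \bm{u},\bm{v}^{t-1}\rangle}{1-\beta}\bm{e}_1$ vanishes at $\bm{\mu}_L$, and one only needs to check $A_{-1}\bm{\mu}_L = \bm{\mu}_L$: rows $2,\ldots,k$ of $A_{-1}$ are the shift $\bm{e}_{j-1}$, so $(A_{-1}\bm{\mu}_L)_j = \nu_L$ for $j\geq 2$, while $(A_{-1}\bm{\mu}_L)_1 = \nu_L\frac{(k-1)\beta+(1-k\beta)}{1-\beta} = \nu_L$. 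Setting $\bm{w}^t := \bm{v}^t - \bm{\mu}_L$, the recursion becomes $\bm{w}^t = T^t\bm{w}^0$, so the conclusion of the lemma follows at once from any bound of the form $\|T^t\|_\infty \leq Cr^t$, which in turn is a consequence of $\rho(T) < 1$.

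Next, I would compute $\mathrm{spec}(T)$ explicitly. Since rows $2,\ldots,k$ of $T$ still act by shifts, any eigenvector $\bm{w}$ of $T$ with eigenvalue $\lambda \neq 0$ must satisfy $w_j = w_1\lambda^{1-j}$. Substituting into the first row and noting that the coefficient of $w_k$ vanishes (because $(A_{-1})_{1,k} = \frac{1-k\beta}{1-\beta} = \frac{u_k}{1-\beta}$), one obtains, after multiplying through by $(1-\beta)\lambda^{k-2}$, the characteristic equation
\begin{equation*}
p(\lambda) := (1-k\beta) + (1-(k-1)\beta)\lambda + (1-(k-2)\beta)\lambda^2 + \cdots + (1-\beta)\lambda^{k-1} = 0.
\end{equation*}
This accounts for $k-1$ eigenvalues, the remaining one being $\lambda = 0$ with eigenvector $\bm{e}_k$.

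The crux of the argument is then to verify that every root of $p$ lies in the open unit disk. The coefficients of $p$, read from low to high degree, form a strictly increasing positive arithmetic progression with common difference $\beta$ (the constant term $1-k\beta$ is strictly positive even in the boundary case $1/\beta \in \N$, where it equals $\beta$). The strict form of the Enestr\"om--Kakeya theorem then gives all roots in $\{|\lambda|<1\}$. For a self-contained argument one can factor
\begin{equation*}
(1-\lambda)\,p(\lambda) = (1-k\beta) + \beta\lambda + \beta\lambda^2 + \cdots + \beta\lambda^{k-1} - (1-\beta)\lambda^k,
\end{equation*}
observe that the modulus of the last term strictly dominates the sum of the others when $|\lambda| > 1$, and note that equality in the triangle inequality on $|\lambda|=1$ forces $\lambda = 1$, which is excluded by the direct computation $p(1) = k(2-(k+1)\beta)/2 > 0$. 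This yields $\rho(T) < 1$ and hence the desired geometric contraction with $C$ and $r$ depending only on $\beta$ and $k$. The only mildly delicate point I expect is keeping track of the boundary case $1/\beta \in \N$, but positivity of $1-k\beta$ saves the Enestr\"om--Kakeya input uniformly.
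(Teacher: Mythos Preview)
Your argument is correct, but it takes a different route from the paper's. The paper first observes that the affine recursion~\eqref{eq:A-1rec} projects onto the hyperplane $\{\langle\bm{u},\cdot\rangle=\gamma\}$ after a single step, so that one may assume $\langle\bm{u},\bm{v}^0\rangle=\gamma$ and the iteration reduces to $\bm{v}^t=A_{-1}^t\bm{v}^0$. Since $A_{-1}$ is a primitive row-stochastic matrix with left Perron eigenvector $\bm{u}$ and right Perron eigenvector $\bm{\mu}_L$, standard Perron--Frobenius theory immediately gives geometric convergence to $\bm{\mu}_L$.

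You instead linearize around the fixed point and bound the spectral radius of $T=A_{-1}-\tfrac{1}{1-\beta}\bm{e}_1\bm{u}^{T}$ directly, computing its characteristic polynomial and invoking Enestr\"om--Kakeya. This is essentially the strategy the paper uses for the companion Lemma~\ref{lem:A0} on $A_0$, so you have unified the treatment of the two cases. Your approach is a bit more computational but is self-contained and has the mild advantage that it never uses the non-negativity of $\bm{v}^0$ (the bound $\|T^t\|_\infty\le Cr^t$ applies to every initial vector). The paper's approach is shorter and exploits the probabilistic structure of $A_{-1}$ that your computation hides: the row-stochasticity of $A_{-1}$ is precisely what forces the coefficients of your $p(\lambda)$ to be the increasing positive sequence $(1-k\beta,\ldots,1-\beta)$ to which Enestr\"om--Kakeya applies.
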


\begin{proof}
From \eqref{eq:A-1rec}, we see that for all $t \geq 0$,
\[ \gamma - \left\langle \bm{u}, \bm{v}^{t+1}\right\rangle = \gamma - \beta\sum_{i=1}^{k-1}v_i^t - (1-k\beta)v_k^t -\gamma + \left\langle \bm{u},\bm{v}^t \right\rangle - \sum_{i=1}^{k-1}(1-(i+1)\beta)v_i^t = 0\]
We may therefore assume that $\left\langle \bm{u},\bm{v} \right\rangle = \gamma$ in \eqref{eq:A-1rec} (and so $\bm{v}^t = A_{-1}^t\bm{v}$). Note that $A_{-1}$ is an irreducible and aperiodic Leslie matrix with leading eigenvalue $\lambda _1 = 1$; $A_{-1}$ is a row stochastic matrix and since it's first row is positive, $A_{-1}$ is also primitive (and so aperiodic). The vectors $\bm{u}$ and $\bm{\mu}_L$ are left and right eigenvectors associated with $\lambda_1$ respectively. It follows from standard Perron-Froebenius Theory (see for example \cite{HOJO:13}) that if $\bm{v}$ is a non-negative vector satisfying $\left\langle \bm{u}, \bm{v}\right\rangle = \gamma$, then 
$A_{-1}^{t} \bm{v}$ converges to $\bm{\mu}_L$
at a geometric rate, completing the proof.
\end{proof}

\begin{lem}\label{lem:A0}
Let $\beta \in (0,1)$ and define
\[
\bm{\mu}_F := (1-\gamma, \ldots, 1-\gamma) \in \R^k.
\]
For $\bm{v} \in \R^{k}$, define $\bm{v}^t$ recursively by
\begin{equation}\label{eq:A0rec}
\bm{v}^0 = \bm{v}, \qquad \text{ and } \qquad \forall t \geq 1,\,\, \bm{v}^{t} =  A_{0} \bm{v}^{t-1} + \frac{1-\gamma}{\beta}\bm{e}_1.
\end{equation}
If $\beta^{-1} \not\in \N$, then there exists constants $C>0$ and $0 < r < 1$ such that for all $\bm{v} \in \R^{k}$,
\[ \forall t\geq 1,
\ \ \ ||\bm{v}^t - \bm{\mu}_F||_\infty \leq Cr^t|| \bm{v} - \bm{\mu}_F||_\infty.\]
\end{lem}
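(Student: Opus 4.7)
The plan is to reduce the lemma to showing that the spectral radius of $A_0$ is less than $1$.

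First, a direct computation checks that $\bm{\mu}_F$ is a fixed point of $T(\bm{v}) := A_0 \bm{v} + \tfrac{1-\gamma}{\beta}\bm{e}_1$: the shift structure of rows $2, \ldots, k$ of $A_0$ immediately gives $(A_0 \bm{\mu}_F)_j = 1 - \gamma$ for $j \geq 2$, while
\[ (A_0 \bm{\mu}_F)_1 = -(k-1)(1-\gamma) - \tfrac{1-k\beta}{\beta}(1-\gamma) = (1-\gamma)\tfrac{\beta - 1}{\beta}, \]
so that $(T\bm{\mu}_F)_1 = (1-\gamma)\tfrac{\beta-1}{\beta} + \tfrac{1-\gamma}{\beta} = 1-\gamma$. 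Subtracting $\bm{\mu}_F$ from both sides of \eqref{eq:A0rec} yields $\bm{v}^t - \bm{\mu}_F = A_0^t (\bm{v} - \bm{\mu}_F)$, so the lemma reduces to the existence of $C > 0$ and $r \in (0, 1)$ with $\|A_0^t\|_\infty \leq C r^t$.

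Next I will identify the characteristic polynomial of $A_0$. The companion-like structure of the matrix suggests testing eigenvectors $\bm{v} = (\lambda^{k-1}, \lambda^{k-2}, \ldots, 1)^\top$; substituting into $A_0 \bm{v} = \lambda \bm{v}$ reduces to
\[ Q(\lambda) := \lambda^k + \lambda^{k-1} + \cdots + \lambda + c = 0, \qquad c := \tfrac{1-k\beta}{\beta}. \]
A cofactor expansion (or induction on $k$, readily verified by hand for $k = 1, 2, 3$) confirms that $Q(\lambda) = \det(\lambda I - A_0)$. Under the assumption $\beta^{-1} \notin \N$ we have $k = \lfloor 1/\beta \rfloor$, so $c \in (0, 1)$. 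Multiplying by $1 - \lambda$ gives
\[ (1 - \lambda) Q(\lambda) = -\lambda^{k+1} + (1-c)\lambda + c, \]
and since $Q(1) = k + c = 1/\beta \neq 0$, the roots of $Q$ are precisely the roots $\lambda \neq 1$ of $\lambda^{k+1} = (1-c)\lambda + c$.

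The main step is ruling out roots with $|\lambda| \geq 1$. Suppose $|\lambda| \geq 1$ satisfies $\lambda^{k+1} = (1 - c)\lambda + c$. By the triangle inequality, $|\lambda|^{k+1} \leq (1-c)|\lambda| + c$. If $|\lambda| > 1$, then $|\lambda|^{k+1} > |\lambda| > (1-c)|\lambda| + c$ (using $c > 0$), a contradiction. If $|\lambda| = 1$, the point $(1-c)\lambda + c$ lies on the circle of radius $1 - c$ centered at $c$; this circle meets the unit circle only at $z = 1$, since $|z|^2 = 1$ together with $|z - c|^2 = (1 - c)^2$ forces $\Re(z) = 1$. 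Thus $\lambda = 1$, which is excluded. Hence $\rho(A_0) < 1$, and by Gelfand's formula (or the Jordan decomposition of $A_0$), for any $r \in (\rho(A_0), 1)$ there exists $C > 0$ with $\|A_0^t\|_\infty \leq Cr^t$, which finishes the proof.

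The spectral step is where the hypothesis $\beta^{-1} \notin \N$ is essential: if $\beta^{-1} = k + 1 \in \N$, then $c = 1$ and $Q(\lambda) = (\lambda^{k+1} - 1)/(\lambda - 1)$ has all its roots on the unit circle, precluding geometric convergence.
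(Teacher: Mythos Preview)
Your proof is correct and follows the same overall architecture as the paper: verify that $\bm{\mu}_F$ is a fixed point, reduce to $\bm{v}^t-\bm{\mu}_F=A_0^t(\bm{v}-\bm{\mu}_F)$, identify the characteristic polynomial $Q(\lambda)=\lambda^k+\cdots+\lambda+c$ with $c=(1-k\beta)/\beta$, and show the spectral radius is strictly below $1$.

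The one genuine difference is in how the spectral bound is obtained. The paper first invokes the Enestr\"om--Kakeya theorem to get $|\lambda|\le 1$, and then rules out $|\lambda|=1$ via the closed form $\sum_{n=0}^k e^{in\theta}=\frac{\sin((k+1)\theta/2)}{\sin(\theta/2)}e^{ik\theta/2}$, arguing that this sum can only take the values $\pm 1$ among real numbers when $k\theta\in 2\pi\Z$, contradicting $1-c\in(0,1)$. Your route instead multiplies $Q$ by $1-\lambda$ to obtain $\lambda^{k+1}=(1-c)\lambda+c$, handles $|\lambda|>1$ by the triangle inequality, and dispatches $|\lambda|=1$ by the geometric observation that the circle of radius $1-c$ about $c$ is internally tangent to the unit circle at $z=1$. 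This is more self-contained (no external theorem, no trigonometric identity) and makes the role of $0<c<1$ completely transparent; the paper's argument has the advantage of citing a classical result and is perhaps more recognizable to readers familiar with polynomial root-location theorems. Both are short and either could replace the other without loss.
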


\begin{proof} We can directly verify that 
\begin{equation}\label{eq:A0mu}
\bm{\mu}_F = A_0 \bm{\mu}_F + \frac{1-\gamma}{\beta}\bm{e}_1.
\end{equation}
Define $\bm{\eps}^t := \bm{v}^t - \bm{\mu}_F$. We can then rework \eqref{eq:A0rec} with the aid of \eqref{eq:A0mu} to get
\begin{equation}\label{eq:epsGen}
\bm{\eps}^{t+1} = A_0\bm{\eps}^t.
\end{equation}
The matrix $A_0$ is a permuted polynomial companion matrix, and so by reading off the first row of $A_{0}$ we see immediately that its characteristic polynomial is 
\begin{equation}\label{eq:CharPoly}
\lambda^k + \lambda^{k-1} + \cdots + \lambda + \frac{1-k\beta}{\beta}.
\end{equation}
From \eqref{eq:CharPoly}, we see that $1$ is not an eigenvalue of $A_0$, and therefore the unique solution to $\bm{\eps} = A_0\bm{\eps}$ is the zero vector; and so from \eqref{eq:epsGen}, $\bm{\mu}_F$ is the unique solution to \eqref{eq:A0mu}. 

\medskip

Suppose $\beta \not\in \N$. From the Enestr\"om-Kakeya Theorem \cite{ENES:20, KAKE:12}, if $a_0 + a_1x + \cdots + a_nx^n$ is a polynomial with real coefficients such that $0 \leq a_{0} \leq \cdots \leq a_n$, then all of it's roots lie in the unit disk $|z| \leq 1$. Therefore, the eigenvalues $\lambda$ of 
$A_{0}$ satisfy $|\lambda| \leq 1$. Let $c = \frac{1-k\beta}{\beta}$, suppose $0 < c < 1$ (which holds when $\beta^{-1} \not\in \N$), and assume there is an eigenvalue with $|\lambda| = 1$, i.e., $\lambda = e^{i\theta}$ for some $\theta$. Then 
\[\sum_{n=0}^k e^{in\theta} = \lambda^k + \lambda^{k-1} + \cdots + \lambda + c + (1-c) = 1-c,\]
and it can be worked out that 
\[ \sum_{n=0}^k e^{in\theta} = \frac{\sin\left(\frac{1}{2}(k+1)\theta\right)}{\sin\left(\frac{1}{2}\theta\right)} e^{\frac{ik\theta}{2}}.\]
Since $1-c$ is real, this means that in the exponent of $e$, $k\theta = 2M\pi$ for some integer $M$. Then $\sin\left(\frac{1}{2}(k+1)\theta\right) = \sin\left(M\pi + \frac{1}{2}\theta\right) = \sin\left(\frac{1}{2}\theta\right)$, which then means $\sum_{n=0}^k e^{in\theta} = \pm 1 \neq 1-c,$ and so $e^{i\theta}$ cannot be an eigenvalue of $A_{0}$. Therefore, $|\lambda| < 1$ for all eigenvalues of $A_{0}$. Since $A_0'$ has spectral radius less than 1, we see that recursive applications of \eqref{eq:epsGen} yields that $||\eps^t||_\infty$ vanishes at a geometric rate. Since $\bm{v}^t = \bm{\mu}_F + \bm{\eps}^t$, the statement of the lemma follows.
\end{proof}

\begin{rmk}
If $\beta^{-1} \in \N$, then $\frac{1-k\beta}{\beta} = 1$, and $\lambda^k + \cdots + \lambda + 1$ has as its roots all the $(k+1)^{th}$ roots of unity except 1, and so $|\lambda| = 1$ for all eigenvalues of $A_{0}$. In fact in this case, we may verify that $A_0$ has period $k$, and so as $t \to \infty$, we see that $\bm{\eps}^t$ will simply cycle through $\bm{\eps}^{0}, \ldots, \bm{\eps}^{k}$. Thus $\bm{v}^t$ does not converge in this case. 
\end{rmk}

\medskip

From Lemmas \ref{lem:AF} and \ref{lem:A0}, we have shown that there is at most one fixed point for each of the two affine transformations of Lemma \ref{lem:affine}. To prove Proposition \ref{prop:unique}, we need to show that these solutions exist by showing that either 
\begin{equation}\label{eq:FPorSP}
\bm{\mu}_L \in \Delta_{-1} \qquad \text{or} \qquad \bm{\mu}_F \in \Delta_0.
\end{equation}
We show in the next lemma that $\gamma_c(\beta)$ as defined in \eqref{eq:gam} is the critical value of $\gamma$ for which we transition from the first inclusion of \eqref{eq:FPorSP} holding (when $\gamma < \gamma_c(\beta)$) to the second inclusion of \eqref{eq:FPorSP} holding (when $\gamma \geq \gamma_c(\beta)$).

\begin{lem}\label{lem:dis}
For $\gamma, \beta \in(0,1)$,
\begin{itemize}
\item[(i)]  $\bm{\mu}_L \in \Delta_{-1}$ iff $\gamma < \gamma_c(\beta)$, in which case $\Sigma(\bm{\mu}_L) = \beta^{-1}\left((\beta - 1)\nu_L + 1 - \gamma \right).$
\item[(ii)]  $\bm{\mu}_F\in \Delta_{0}$ iff $\gamma \geq\gamma_{c}(\beta)$, in which case $\Sigma(\bm{\mu}_F) = \left\langle \bm{u},\bm{\mu}_F \right\rangle + 1 - 2\gamma$.
\end{itemize}
\end{lem}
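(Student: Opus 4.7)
The plan is to verify each statement by direct but careful algebraic manipulation, keeping track of which branch of the piecewise function $\Sigma$ is active. For part (i), I would first check that $\bm{\mu}_L$ lies in the base simplex $\Delta$: the normalization $\nu_L=\gamma/\sum_{j=1}^k(1-j\beta)$ is chosen precisely so that $\langle \bm{u},\bm{\mu}_L\rangle=\gamma$, and non-negativity is immediate. The first branch of the definition of $\Sigma$ applies whenever $\nu_L<(1-\gamma)/(1-k\beta)$; substituting into that branch, using $1\wedge\beta=\beta$ since $\beta\in(0,1)$, and collecting terms via $\beta(k-1)+(1-k\beta)=1-\beta$, one obtains
\[
\Sigma(\bm{\mu}_L) \;=\; -\nu_L\tfrac{1-\beta}{\beta}+\tfrac{1-\gamma}{\beta} \;=\; \tfrac{(\beta-1)\nu_L+1-\gamma}{\beta},
\]
matching the formula in the lemma.

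Next I would translate the membership condition $1-\gamma<\Sigma(\bm{\mu}_L)$ into a clean inequality: rearranging $1-\gamma<((\beta-1)\nu_L+1-\gamma)/\beta$ reduces (using $1-\beta>0$) to $\nu_L<1-\gamma$. Plugging in the explicit value of $\nu_L$ and using the identity $\sum_{j=1}^k(1-j\beta)=k(2-(k+1)\beta)/2$, this rearranges to
\[
\frac{2\gamma}{k(2-(k+1)\beta)} \,<\, 1-\gamma \;\iff\; \gamma \,<\, \frac{k(2-(k+1)\beta)}{(k+1)(2-k\beta)}=\gamma_c(\beta),
\]
where I use the factorization $2+k(2-(k+1)\beta)=(k+1)(2-k\beta)$. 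This gives the equivalence in part (i), provided that the first branch of $\Sigma$ is indeed the relevant one; since $1-\gamma<(1-\gamma)/(1-k\beta)$, any $\nu_L\geq 1-\gamma$ still sits in the first branch, so I only need to worry about the case $\nu_L\geq(1-\gamma)/(1-k\beta)$ (second branch), in which a quick computation gives $\Sigma(\bm{\mu}_L)=-k\nu_L+(1-\gamma)/(1-k\beta)$, and requiring $\Sigma(\bm{\mu}_L)>1-\gamma$ would force $\nu_L<\beta(1-\gamma)/(1-k\beta)$, strictly smaller than the branch threshold since $\beta<1$ — a contradiction.

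For part (ii), I would first note that $\bm{\mu}_F\in\Delta$ is equivalent to $\langle \bm{u},\bm{\mu}_F\rangle=(1-\gamma)\sum_{j=1}^k(1-j\beta)\leq\gamma$, which by the same factorization identity as above rearranges exactly to $\gamma\geq\gamma_c(\beta)$. Since $\mu_{F,k}=1-\gamma<(1-\gamma)/(1-k\beta)$ always, the first branch of $\Sigma$ applies, and collapsing the last three terms via $\tfrac{1-\gamma}{\beta}(1-(1-k\beta))=k(1-\gamma)$ yields $\Sigma(\bm{\mu}_F)=\langle \bm{u},\bm{\mu}_F\rangle+1-2\gamma$. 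The membership $\bm{\mu}_F\in\Delta_0$ requires two inequalities: the left one $\langle \bm{u},\bm{\mu}_F\rangle-\gamma<\Sigma(\bm{\mu}_F)$ becomes $-\gamma<1-2\gamma$ which is automatic for $\gamma<1$, and the right one $\Sigma(\bm{\mu}_F)\leq 1-\gamma$ reduces to $\langle \bm{u},\bm{\mu}_F\rangle\leq\gamma$, i.e., exactly to $\bm{\mu}_F\in\Delta$.

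The main obstacle I anticipate is bookkeeping with the branches of $\Sigma$ rather than any genuinely nontrivial step — in particular, the ``only if'' direction of part (i) requires excluding the second-branch scenario discussed above. Everything else is careful but mechanical manipulation built on the two identities $\sum_{j=1}^k(1-j\beta)=k(2-(k+1)\beta)/2$ and $2+k(2-(k+1)\beta)=(k+1)(2-k\beta)$, from which $\gamma_c(\beta)$ emerges naturally in both parts as the common threshold.
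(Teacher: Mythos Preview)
Your proposal is correct and follows essentially the same approach as the paper: both arguments reduce the membership conditions to the inequality $\nu_L<1-\gamma$ (equivalently $\langle\bm{u},\bm{\mu}_F\rangle\le\gamma$) and identify this with $\gamma<\gamma_c(\beta)$ via the identity $(1-\gamma_c)\sum_{j=1}^k(1-j\beta)=\gamma_c$, then verify the branch of $\Sigma$ case by case. One small wording slip: the sentence ``any $\nu_L\ge 1-\gamma$ still sits in the first branch'' is literally false (values $\nu_L\ge(1-\gamma)/(1-k\beta)$ are in the second branch), but your intended logic---the first-branch iff already disposes of $\nu_L\in[1-\gamma,(1-\gamma)/(1-k\beta))$, leaving only the second-branch case to exclude---is sound and matches what the paper does.
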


\begin{proof}
The key observation underlying the proof is that our choice of $\gamma_c := \gamma_c(\beta)$ entails that 
\begin{equation}\label{eq:disGamCrit} 
\text{for } \gamma \in (0,1), \qquad \begin{array}{ccccc}
\nu_L < 1 - \gamma & & \text{if and only if} & & \gamma < \gamma_c(\beta), \\
\gamma \geq \left\langle \bm{u},\bm{\mu}_F \right\rangle & & \text{if and only if} & & \gamma \geq \gamma_c(\beta),
\end{array}
\end{equation}
which follows by rewriting \eqref{eq:gam} as 
\[ \gamma_c = \frac{\sum_{j=1}^k(1-j\beta)}{1 + \sum_{j=1}^k(1-j\beta)} \,\, \Longrightarrow\,\,  (1-\gamma_c)\sum_{j=1}^k(1-j\beta) = \gamma_c.\]

Let $\gamma < \gamma_c(\beta)$. Then by \eqref{eq:disGamCrit}, $\left\langle \bm{u}, \bm{\mu}_F \right\rangle > \gamma$, and so does not belong to $\Delta_{0} \subset \Delta$.
In order to prove that $\bm{\mu}_L\in\Delta_{-1}$, we need to show that $\Sigma(\bm{\mu}_L)> 1-\gamma$.
From \eqref{eq:disGamCrit}, the first condition in \eqref{eq:sigmaV} is satisfied and a direct computation shows that 
\begin{equation}\label{eq:sigmaStrong}
\Sigma(\bm{\mu}_L) = \frac{(\beta - 1)\nu_L + 1-\gamma}{\beta}.
\end{equation}
Since  $\nu_L < 1-\gamma$ and $\beta-1<0$, this yields $\Sigma(\bm{\mu}_L) > 1 - \gamma$ and 
$\bm{\mu}_L\in\Delta_{-1}$. 

\medskip

Let us now take  
$\gamma \geq \gamma_c(\beta)$. For $\bm{\mu}_F$, The first condition of \eqref{eq:sigmaV} is satisfied when $\beta^{-1} \not\in \N$, while the second condition is satisfied when $\beta^{-1} \in \N$. Both cases yield 
\[\Sigma(\bm{\mu}_F) = -\gamma + \left\langle \bm{u},\bm{\mu}_F \right\rangle + 1- \gamma,\]
and so $-\gamma + \left\langle \bm{u},\bm{\mu}_F \right\rangle < \Sigma(\bm{\mu}_F) \leq 1-\gamma$ and $\bm{\mu}_F \in \Delta_0$. It remains to prove that $\bm{\mu}_L\not\in\Delta_{-1}$. 
If the first condition of \eqref{eq:sigmaV} is satisfied, we have the same expression as \eqref{eq:sigmaStrong}, and since $\nu_L \geq 1-\gamma$, it follows that $\Sigma(\bm{\mu}_L) \leq 1-\gamma$. If the second condition of \eqref{eq:sigmaV} is satisfied, then $\nu_L \geq \frac{1-\gamma}{1-k\beta}$, and $\Sigma(\bm{\mu}_L) \leq -\nu_L + \frac{1-\gamma}{1-k\beta} < 0$. Both conditions yield $\Sigma(\bm{\mu}_L) \leq 1-\gamma$, and so $\bm{\mu}_L \not\in \Delta_{-1}$.
\end{proof}

\bigskip

The proofs of Proposition \ref{prop:unique} and Theorem \ref{thm:transition} now follow from the previous lemmas. 

\begin{proof}[Proof of Proposition 
\ref{prop:unique}]
For $\beta \in (0,1)$, Lemmas \ref{lem:operatorsFixedPoint}, \ref{lem:DeltaClasses}, \ref{lem:affine}, along with Lemmas \ref{lem:AF}--\ref{lem:dis} imply that
\begin{equation}\label{eq:v(g)=mu}
\bm{v}(G) = \begin{cases}
\bm{\mu}_L & \text{ if } \gamma < \gamma_c(\beta),\\
\bm{\mu}_F & \text{ if } \gamma \geq \gamma_c(\beta),
\end{cases}
\end{equation}
while $\bm{v}(G) \in \Delta_0$ is the empty vector when $\beta \geq 1$. By Lemma \ref{lem:affine} and Lemma \ref{lem:dis}, a quick calculation yields
\[\nu := U(s\circ r (G)) - U(G) = 
\begin{cases}
\nu_L & \text{ if } \gamma < \gamma_c(\beta), \\
1-\gamma & \text{ if } \gamma \geq \gamma_c(\beta),
\end{cases}\] 
and so $\nu = \nu(\gamma, \beta)$ as defined in Theorem \ref{thm:transition}. Recalling $\bm{\mu}_L$ from Lemma \ref{lem:AF} and $\bm{\mu}_{F}$ from Lemma \ref{lem:A0}, we thus have $\bm{v}(G) = (\nu, \ldots, \nu) \in \R^k$ and 
\[ \gamma - \left\langle \bm{u}, \bm{v}(G) \right\rangle = \begin{cases}
0 & \text{ if } \gamma < \gamma_c(\beta), \\
-\chi & \text{ if } \gamma \geq \gamma_c(\beta),
\end{cases}\]
where $\chi := \chi(\gamma,\beta)$ is defined in \eqref{eq:defchi}. Thus if $G = f_{\bm{y}}$, we see from \eqref{eq:defv(g)} and \eqref{eq:v0} that $y_0 = 0$ and $y_j = -(\chi \vee 0) - (j-1)\nu$ for $j=1,\ldots,k+1$. We need only find $y_j$ for $j > k+1$. By Corollary \ref{cor:operators} and Lemma \ref{lem:operators}, we see that $\psi_{\Sigma(\bm{v}(G))}(\bm{y})_{j} = y_{j-1}$ for all $j= k+1, \ldots$. Since $(G,\nu)$ is a traveling wave, this implies $\psi_{\Sigma(\bm{v}(G))}(\bm{y})_{j} = y_j + \nu$, and so 
\begin{equation*}
\bm{y} = \left( 0,\left(-(\chi\vee 0) - (j-1)\nu)\right)_{j=1}^K\right),
\end{equation*}
completing the proof. 
\end{proof}

\begin{proof}[Proof of Theorem \ref{thm:transition}]
The statements on the speed of the wave (a.1) and (b.1) follow directly from Proposition \ref{prop:unique}. Let $g^0 = G$ and $g^1 = s \circ r (g^0)$; the statements (a.2) and (b.2) follow by evaluating $\sigma^\ast = \Sigma(\bm{v}(G))$ with the help of Lemma \ref{lem:dis} and \eqref{eq:v(g)=mu}. If $\gamma < \gamma_c$ then $\Sigma(\bm{v}(G)) > 1-\gamma$ and so $g^1 = r(g^0)(x) + \beta(x - \sigma^\ast) < r(g^0)(x)$ for all $x \in \text{Supp}(g^1)$, and if $\gamma \geq \gamma_c$ then $\Sigma(\bm{v}(G)) = -\chi + 1- \gamma$ and so $g^1(x) = r(g^0)(x)$ for all $x \in \text{Supp}(g^1)\cap[-\chi + 1 - \gamma,\infty) = [-\chi + 1 -\gamma, 1 - \gamma]$.
\end{proof}

\subsection{Proof of Theorem \ref{thm:speed}}\label{sec:proofthmSpeed}

Recall the definition of $\phi$:
\[ \phi(g_1,g_2) := \sup_{x \in \R}\left\lvert g_1(x)_+ - g_2(x)_+\right\rvert \vee |U(g_1) - U(g_2)| \vee |L(g_1) - L(g_2)|.\]
Throughout this section, we will denote $\Sigma := \Sigma(\bm{v}(G))$, where $G = f_{\bm{y}}$ is the traveling wave solution defined in \eqref{eq:defy} of Proposition \ref{prop:unique}. To prove Theorem \ref{thm:speed}, we will make use of the following continuity lemmas, the proofs of which follow by technical applications of previous lemmas and are relegated to Section \ref{sec:ContinuityLemmas}.

\begin{lem}\label{lem:cont}
The transformation $s \circ r$ is continuous (with respect to $\phi$) at $G$. 
\end{lem}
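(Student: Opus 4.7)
The plan is to write $s \circ r = s_{\sigma^\ast(r(\cdot))} \circ r$ and prove continuity at $G$ in three steps: (i) continuity of $g \mapsto r(g)$ at $G$, (ii) continuity of $g \mapsto \sigma^\ast(r(g))$ at $G$, and (iii) joint continuity of $(\sigma,h) \mapsto s_\sigma(h)$ at $(\sigma^\ast(r(G)), r(G))$, all measured in $\phi$ (or the uniform norm, where appropriate). Once these are in place, continuity of $s \circ r$ at $G$ follows by a standard chain-rule argument.

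For (i), recall from~(\ref{def:r}) that
\[ r(g)(x) = \pi\left(1 - \gamma + \sup_z(g(z) + h(x-z))\right). \]
Under~(\ref{eq:hCond3}), $h$ is Lipschitz on $\R$ with constant $c_-\vee 1$, so the map $g \mapsto \sup_z(g(z) + h(\cdot - z))$ is $1$-Lipschitz in the uniform norm provided the two inputs share the same support. The $\phi$-metric controls $g_+$ uniformly together with $L(g)$ and $U(g)$, so the only delicate point is the mismatch between $\mbox{Supp}(g)$ and $\mbox{Supp}(G)$. This is mild because $G \in \mathcal{T}$ is concave with $G(L(G)) = G(U(G)) = 0$: a short computation bounds the boundary contributions by a constant multiple of $\phi(g,G)$. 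The edges of $r(g)$ can be read off from Lemma~\ref{lem:repLin} and I would check that they vary continuously with $g$.

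For (ii) and (iii), set $F_g(\sigma) := \sup_z s_\sigma(r(g))(z)$. Joint continuity of $(\sigma,h) \mapsto s_\sigma(h)$ in the uniform norm is immediate from $s_\sigma(h)(x) = \pi(h(x) + \beta(x-\sigma)_-)$, so combined with (i) it yields joint continuity of $(\sigma,g) \mapsto F_g(\sigma)$ at $(\sigma^\ast(r(G)), G)$. Moreover $\sigma \mapsto F_g(\sigma)$ is continuous and non-increasing, equals $1$ for $\sigma$ small enough, and tends to $0$ as $\sigma \to +\infty$, so $\sigma^\ast(r(g)) = \inf\{\sigma : F_g(\sigma) \leq \gamma\}$ is well defined. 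Continuity of $\sigma^\ast$ at $G$ then reduces to showing that $\sigma \mapsto F_G(\sigma)$ is \emph{strictly} decreasing at $\sigma^\ast := \sigma^\ast(r(G))$, i.e.\ that $F_G$ has no plateau at height $\gamma$.

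The hard part will be exactly this strict monotonicity. Using Proposition~\ref{prop:unique}, Lemma~\ref{lem:sigma_l}, and the piecewise-linear description of $r(G)$ from Lemma~\ref{lem:repLin}(ii), I would argue by direct case analysis that when $\gamma < \gamma_c$ or when $\gamma > \gamma_c$ with $1/\beta \notin \N$, the argmax of $s_\sigma(r(G))$ for $\sigma$ near $\sigma^\ast$ lies inside a single non-degenerate linear piece of $r(G)$, so a perturbation $\sigma^\ast \mapsto \sigma^\ast + \eta$ decreases $F_G$ by an amount of order $\eta$. This corresponds to $\sigma^\ast$ lying in the ``generic'' first case of~(\ref{eq:sigmaV}). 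The excluded cases $\gamma = \gamma_c$ and $(\gamma > \gamma_c,\ 1/\beta \in \N)$ are precisely those where the piecewise-linear symmetries force $F_G$ to admit a plateau at height $\gamma$, which is why they are excluded in the hypotheses of Theorem~\ref{thm:speed}. Once strict monotonicity is in hand, a sandwich argument gives a quantitative modulus $|\sigma^\ast(r(g)) - \sigma^\ast| \to 0$ as $\phi(g,G) \to 0$, and combining with (i) and (iii) yields $\phi(s \circ r(g), s \circ r(G)) \to 0$.
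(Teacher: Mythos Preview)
Your three–step chain–rule decomposition is a reasonable strategy and is different from the paper's. The paper does not prove continuity of $r$, $\sigma^\ast$, and $s_\sigma$ separately on the function space; instead it sandwiches any $g$ with $\phi(g,G)<\delta$ between two piecewise linear functions $f_{\bm y^{-\delta}}\le g\le f_{\bm y^{+\delta}}$ with $\bm y^{\pm\delta}$ close to $\bm y$, uses the monotonicity of $s_\sigma\circ r$ to propagate the sandwich, and then invokes the explicit finite–dimensional formulas (Lemmas~\ref{lem:operators}, \ref{lem:sigma_l}, \ref{lem:Contf}) to control both $\sigma^\ast(r(g))$ and $s\circ r(g)$. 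This buys robustness: $g$ is only assumed in $\mathcal D$, not concave, so $r(g)$ and $s_\sigma(r(g))$ need not be concave and their lower edges could a priori behave badly; the sandwich sidesteps all edge bookkeeping by squeezing between two explicit concave functions.

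There is, however, a real gap in your argument for step (ii). You claim that strict monotonicity of $\sigma\mapsto F_G(\sigma)$ at $\sigma^\ast$ fails precisely at $\gamma=\gamma_c$ and at $\gamma>\gamma_c$ with $1/\beta\in\N$, and that this is why those cases are excluded in Theorem~\ref{thm:speed}. This is incorrect on both counts. Lemma~\ref{lem:cont} carries no such hypotheses, and the uniqueness of $\sigma^\ast$ (equivalently, absence of a plateau of $F_G$ at height $\gamma$) is established in the proof of Theorem~\ref{thm:main} for \emph{every} concave $r$, hence in particular for $r(G)$, using only concavity and $\sup r(G)=1>\gamma$. The exclusions in Theorem~\ref{thm:speed} are there for a completely different reason: they ensure the spectral radius of $A_0$ is strictly less than $1$ (Lemma~\ref{lem:A0}) and that $\bm v(G)$ is an interior point of $\Delta_{-1}$ or $\Delta_0$, which is what drives the geometric convergence of $\bm v(g^t)$. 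So your ``hard part'' is already done elsewhere in the paper and holds unconditionally; once you cite that uniqueness argument, your sandwich for $\sigma^\ast$ goes through without any case analysis. You should also be more careful in steps (i) and (iii) about $L(r(g))$ and $L(s_\sigma(r(g)))$ when $g$ is not concave; the paper's piecewise-linear sandwich handles this automatically, whereas in your direct approach it needs a separate argument.
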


\begin{lem}\label{lem:Equiv}
There exists a constant $C > 0$ such that for any $g, g' \in \mathcal{T}$, 
\[ ||\bm{v}(g) - \bm{v}(g')||_\infty \leq C \phi(g, g').\]
\end{lem}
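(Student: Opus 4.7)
My plan is to introduce the supporting-line intercepts
\[
a_j(g) \,:=\, \sup_{x \in \R}\bigl(g(x) + (1-j\beta)\,x\bigr), \qquad j = 0, 1, \ldots, k+1,
\]
and to establish the uniform Lipschitz estimate $|a_j(g) - a_j(g')| \leq 2\,\phi(g, g')$. This suffices for the lemma: since $g \in \mathcal{T}$ is piecewise linear with slope $-(1-j\beta)$ precisely on $[x_{j+1}(g), x_j(g)]$, the supremum defining $a_j(g)$ is attained there, and a short telescoping computation starting from $a_j(g) = g(x_j(g)) + (1-j\beta)\,x_j(g)$ and using $g(x_j) = \sum_{i<j}(1-i\beta)(x_i-x_{i+1})$ yields the closed form
\[
a_j(g) \,=\, x_0(g) - \beta\sum_{i=1}^{j} x_i(g).
\]
Inverting this triangular relation gives $x_0(g) = a_0(g)$, $x_j(g) = (a_{j-1}(g) - a_j(g))/\beta$ for $j \geq 1$, and hence
\[
v_j(g) \,=\, \frac{a_{j-1}(g) - 2\,a_j(g) + a_{j+1}(g)}{\beta}, \qquad j = 1, \ldots, k,
\]
from which $\|\bm{v}(g) - \bm{v}(g')\|_\infty \leq (8/\beta)\,\phi(g, g')$.

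To prove the key estimate, fix $j$, set $\phi := \phi(g,g')$, and let $x^{\ast} := x_j(g)$ be the rightmost maximizer of $x \mapsto g(x) + (1-j\beta)\,x$, so that $a_j(g) = g(x^{\ast}) + (1-j\beta)\,x^{\ast}$. If $x^{\ast} \in [L(g'), U(g')]$, then $g(x^{\ast})$ and $g'(x^{\ast})$ are both nonnegative and $|g(x^{\ast}) - g'(x^{\ast})| \leq \phi$ by the defining property of the metric; evaluating the supremum at $x^{\ast}$ yields $a_j(g') \geq g'(x^{\ast}) + (1-j\beta)\,x^{\ast} \geq a_j(g) - \phi$ immediately. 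Otherwise $x^{\ast} \notin [L(g'), U(g')]$, so $g'(x^{\ast}) = -\infty$ and the positive-part clause in the definition of $\phi$ \emph{automatically} forces
\[
g(x^{\ast}) \,=\, \bigl|g(x^{\ast})_+ - g'(x^{\ast})_+\bigr| \,\leq\, \phi.
\]
Letting $x^{\ast\ast}$ denote the point of $[L(g'), U(g')]$ closest to $x^{\ast}$, the inequalities $|U(g) - U(g')| \leq \phi$ and $|L(g) - L(g')| \leq \phi$ yield $|x^{\ast} - x^{\ast\ast}| \leq \phi$. Since $g'(x^{\ast\ast}) \geq 0$, we have $a_j(g') \geq (1-j\beta)\,x^{\ast\ast}$ and therefore
\[
a_j(g) - a_j(g') \,\leq\, g(x^{\ast}) + (1-j\beta)(x^{\ast} - x^{\ast\ast}) \,\leq\, \bigl(1 + |1-j\beta|\bigr)\,\phi \,\leq\, 2\phi,
\]
using $|1-j\beta| \leq 1$ for every $j \in \{0, 1, \ldots, k+1\}$ (which holds because $j\beta \in [0, (k+1)\beta] \subset [0, 2]$). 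Swapping the roles of $g$ and $g'$ gives the reverse inequality, completing the proof.

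The main obstacle is the edge scenario where $x^{\ast} \notin \mbox{Supp}(g')$, in which the naive pointwise comparison collapses because $g'$ is undefined at $x^{\ast}$. The elegant resolution is that the metric $\phi$ itself, through its positive-part clause, automatically forces $g(x^{\ast}) \leq \phi$ whenever $x^{\ast}$ falls outside $\mbox{Supp}(g')$; this sidesteps any need for a geometric a priori bound on the shape of $g$ near its edges (such as $x_{k+1}(g) - L(g) \geq \gamma/(c_-+\beta)$), and keeps the Lipschitz constant uniform in $c_-$.
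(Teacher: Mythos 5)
Your proof is correct and takes a genuinely different route from the paper's. The paper works directly on the breakpoints: fixing $j$ and assuming $x_j < x_j'$, it uses concavity and the fact that the slopes of $f_{\bm{x}}$ on either side of $x_j$ differ by exactly $\beta$ to telescope the metric at $x_j$ and at $x_j'$ (or at $x_0$ when $x_j' > x_0$), yielding $\phi(f_{\bm{x}}, f_{\bm{x}'}) \geq c\,|x_j - x_j'|$ with $c$ of order $1\wedge\beta$, and then passes to $\bm{v}$ by the triangle inequality. You instead introduce the Legendre-type dual coordinates $a_j(g) = \sup_x\bigl(g(x) + (1-j\beta)x\bigr)$, prove the near-immediate $2$-Lipschitz bound $|a_j(g) - a_j(g')| \leq 2\phi(g,g')$, and invert the triangular relation $a_j = x_0 - \beta\sum_{i\leq j}x_i$ to recover $\bm{v}$. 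The observation that the positive-part clause in $\phi$ automatically forces $g(x^\ast) \leq \phi$ whenever $x^\ast$ leaves $\mbox{Supp}(g')$ is a clean way to handle the edge case that the paper's proof treats only implicitly. I checked the closed form $a_j(g) = x_0 - \beta\sum_{i=1}^j x_i$ (Abel summation) and the inversion $x_j = (a_{j-1}-a_j)/\beta$, $v_j = (a_{j-1}-2a_j+a_{j+1})/\beta$; both are correct, and your constant $8/\beta$ is comparable to the paper's. One small imprecision: your parenthetical justification ``$j\beta \in [0,(k+1)\beta]\subset[0,2]$'' fails when $\beta > 2$ (then $k=0$ and $(k+1)\beta=\beta>2$), though in that regime $\bm{v}(g)$ is empty and the lemma is vacuous; it would be cleaner to note that the bound $|1-j\beta|\leq 1$ is needed only for $\beta<1$, where $(k+1)\beta < 1+\beta < 2$ follows from $k\beta<1$.
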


\begin{lem}\label{lem:Contf}
Let $\bm{x} \in \R^{K+1}$ be non-increasing. There exists a constant $C$ such that 
\begin{itemize}
\item[(i):] For $K \in \N$ and $||\bm{y} - \bm{x}||_\infty$ small enough, 
\[ \phi(G, f_{\bm{x}}) \leq C||\bm{y} - \bm{x}||_\infty.\]
\item[(ii):] For $K = \infty$ with $M = \min\{j \in \N : \: y_{j+1} < L(G)\}$ and $\displaystyle{\max_{j=0, \ldots, M}|y_j - x_j|}$ small enough,
\[ \phi(G, f_{\bm{x}}) \leq C \max_{j=0, \ldots, M}|y_j - x_j|.\]
\end{itemize}
\end{lem}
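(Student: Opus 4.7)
The plan is to leverage the explicit integral representation
\[ f_{\bm{x}}(x) \;=\; \pi\bigl(F_{\bm{x}}(x)\bigr), \quad F_{\bm{x}}(x) \;:=\; \int_x^{x_0}\phi_{\bm{x}}(z)\,dz, \quad \phi_{\bm{x}}(z) \;:=\; (-c_- - \beta)\vee\Bigl(1-\beta\sum_{j=1}^{K}\1_{\{z<x_j\}}\Bigr),\]
and bound each of the three contributions to $\phi(G, f_{\bm{x}})$ separately. The upper-edge contribution is trivial, since $U(f_{\bm{x}})=x_0$ gives $|U(G)-U(f_{\bm{x}})|=|y_0-x_0|\leq \|\bm{y}-\bm{x}\|_\infty$.

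For the sup contribution, since $t\mapsto t_+$ is $1$-Lipschitz and $\pi(t)_+=t_+$, it suffices to bound $\sup_x|F_{\bm{x}}(x)-F_{\bm{y}}(x)|$. I split
\[ F_{\bm{x}}(x)-F_{\bm{y}}(x) \;=\; \int_{y_0\wedge x_0}^{y_0\vee x_0}\!\!\!\phi\,dz \;+\; \int_x^{y_0\wedge x_0}\bigl(\phi_{\bm{x}}-\phi_{\bm{y}}\bigr)\,dz,\]
the boundary piece being controlled by $(1+c_-+\beta)|x_0-y_0|$ because $\phi$ is valued in $[-c_- - \beta, 1]$. The crucial observation for the bulk piece is that its integrand vanishes outside the union $U:=\bigcup_{j=1}^{K}[\min(x_j,y_j),\max(x_j,y_j)]$, whose Lebesgue measure is bounded by $(K+1)\|\bm{y}-\bm{x}\|_\infty$, while $|\phi_{\bm{x}}-\phi_{\bm{y}}|\leq 1+c_-+\beta$ pointwise. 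Together this yields $\sup_x|F_{\bm{x}}(x)-F_{\bm{y}}(x)|\leq C_K\|\bm{y}-\bm{x}\|_\infty$.

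For the lower edge, write $L_0:=L(G)$ and $L_{\bm{x}}:=L(f_{\bm{x}})$, so that $F_{\bm{y}}(L_0)=0=F_{\bm{x}}(L_{\bm{x}})$. The sup bound already gives $|F_{\bm{x}}(L_0)|=|F_{\bm{x}}(L_0)-F_{\bm{y}}(L_0)|\leq C_K\|\bm{y}-\bm{x}\|_\infty$. On the other hand, $F_{\bm{x}}$ is continuous and piecewise linear with slope $-\phi_{\bm{x}}$; on a small neighbourhood of $L_0$ this slope coincides with $-\phi_{\bm{y}}(L_0)+O(\|\bm{y}-\bm{x}\|_\infty)$, and $-\phi_{\bm{y}}(L_0)>0$ is bounded away from $0$ (by $c_-+\beta$ when $K$ is finite and $L_0<y_K$, and otherwise by $\min_{j\geq k+1}(j\beta-1)>0$ thanks to $1/\beta\notin\N$). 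Solving $F_{\bm{x}}(L_{\bm{x}})=0$ linearly in a neighbourhood of $L_0$ gives $|L_{\bm{x}}-L_0|\leq C\|\bm{y}-\bm{x}\|_\infty$, which concludes (i).

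Part (ii) reduces to (i). The strict inequality $y_{M+1}<L_0$ entails that for $\bm{x}$ close enough to $\bm{y}$ in the truncated norm $\max_{j\leq M}|y_j-x_j|$ one still has $x_{M+1}<L_0-\eta$ for some fixed $\eta>0$; monotonicity of $\bm{x}$ then forces $x_j\leq x_{M+1}<L_0$ for every $j>M$. By Lemma \ref{lem:EqClassf}, I may replace each such tail entry $x_j$ ($j>M$) by $y_j$ without altering $f_{\bm{x}}$, reducing matters to the finite-$K$ case with $K$ replaced by $M$ and the controlling norm being $\max_{j\leq M}|y_j-x_j|$. The main obstacle in the whole argument is the uniform nondegeneracy of the slope of $F_{\bm{x}}$ at $L_0$: this is where the technical assumption $1/\beta\notin\N$ of Theorem \ref{thm:speed} is genuinely needed, for without it the slope $(k+1)\beta-1$ just below $y_{k+1}$ could vanish and $L_{\bm{x}}$ would fail to depend Lipschitz-continuously on $\bm{x}$; in the finite-$K$ case the contribution $c_-+\beta>0$ from the clipping provides a fallback bound on the slope near $L_0$.
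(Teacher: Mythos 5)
Your approach to the $\sup$ contribution is essentially the paper's: decompose $F_{\bm{x}}-F_{\bm{y}}$ into a boundary piece near $x_0$ and a bulk piece where the derivatives $\phi_{\bm{x}},\phi_{\bm{y}}$ can only differ on the intervals $[x_j\wedge y_j,\,x_j\vee y_j]$. The paper uses the slightly sharper observation that on each such interval the slopes differ by exactly $\beta$; your cruder $(1+c_-+\beta)$ constant is fine for $K<\infty$ but would be infinite when $c_-=\infty$, so for part~(ii) you would have to replace it by the (finite) bound available on $[L(G)-\eta,\,y_0]$ — a fixable technicality. You go beyond the paper by explicitly addressing the lower-edge term $|L(G)-L(f_{\bm{x}})|$ of $\phi$ via a nondegeneracy-of-slope argument, which the published proof handles only implicitly through concavity; that step is genuinely needed and your mechanism for it is correct in spirit.

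However, two points are wrong as written. First, the role you ascribe to the hypothesis $1/\beta\notin\N$ is mistaken. The slope you need to be bounded away from zero is $-\phi_{\bm{y}}(L(G)^+)=M\beta-1$, where $L(G)\in(y_{M+1},y_M]$, not $\min_{j\geq k+1}(j\beta-1)=(k+1)\beta-1$. The segment on which the slope $-(1-(k+1)\beta)$ vanishes (when $1/\beta\in\N$) is $[y_{k+2},y_{k+1}]$, where $G\equiv\gamma>0$ — this is the \emph{peak} of the wave, not the lower edge. Since $G(y_{k+1})=\gamma>0=G(L(G))$, one must have $M\geq k+2$, hence $M\beta-1\geq\beta>0$ even when $1/\beta\in\N$. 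So no extra hypothesis is required for this lemma; the condition $1/\beta\notin\N$ in Theorem~\ref{thm:speed} is needed for Lemma~\ref{lem:A0}, not here.

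Second, and more seriously, the reduction of part~(ii) to part~(i) has a genuine gap. You assert that $\max_{j\leq M}|y_j-x_j|$ being small forces $x_{M+1}<L(G)-\eta$, invoking monotonicity of $\bm{x}$. But monotonicity only gives $x_{M+1}\leq x_M$, and $x_M$ is close to $y_M\geq L(G)$; so this argument yields $x_{M+1}<L(G)+\varepsilon$, not $x_{M+1}<L(G)-\eta$. Indeed, if one takes $x_j=y_j$ for $j\leq M$ and $x_{M+1}=x_M=y_M$ (and, say, $x_j=y_j$ for $j>M+1$), the controlling quantity $\max_{j\leq M}|y_j-x_j|$ is zero, yet $f_{\bm{x}}$ has an additional kink at $y_M$ of magnitude $\beta$ and thus $\phi(G,f_{\bm{x}})$ is bounded away from zero. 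The application of Lemma~\ref{lem:EqClassf} that you propose requires $x_{M+1}\leq L(f_{\bm{x}})$, which your hypothesis does not deliver. To make the argument work one must control $x_{M+1}$ as well (for instance by assuming $\max_{j\leq M+1}|y_j-x_j|$ is small, which is what is in fact available in the application in the proof of Theorem~\ref{thm:speed}); as presently written, the implication you use is false.
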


To outline the proof of Theorem \ref{thm:speed}, Lemmas \ref{lem:cont} and \ref{lem:Equiv} allow us to guarantee that once $g^{t_\ell} \in \mathcal{T}$ (recall Theorem \ref{thm:pwl}), then $\bm{v}(g^{t_\ell})$ is close to $\bm{v}(G)$ whenever $\phi(G, g^0)$ is small enough. We then apply the convergence Lemmas \ref{lem:AF} and \ref{lem:A0} of Section \ref{sec:proofthmTransition} to show that $\bm{v}(g^t)$ converges at a geometric rate to $\bm{v}(G)$. To apply these convergence lemmas we need (from Lemma \ref{lem:affine}) that $\bm{v}(g^t)$ and $\bm{v}(G)$ belong to the same set $\Delta_i$ for all $t \geq t_\ell$; this condition prevents us from being able to prove a global convergence result in this article, but does hold if $\bm{v}(G)$ is an interior point of either $\Delta_{-1}$ or $\Delta_0$ and $\bm{v}(g^{t_\ell})$ is close enough to $\bm{v}(G)$. Since $g^t \in \mathcal{T}$ then $g^t = f_{\bm{x}^t}$ for some $\bm{x}^t$, and we show that since $\bm{v}(g^t)$ converges to $\bm{v}(G)$ at a geometric rate, so does $\bm{x}^t$ converge to a translation of $\bm{y}$ at a geometric rate. The result then follows by applying Lemma \ref{lem:Contf}. 

\begin{proof}[Proof of Theorem \ref{thm:speed}]
Throughout, let $\bm{y}^t := (y_0 + t\nu, y_1 + t\nu, \ldots, y_K + t\nu)$ so that $G^t := f_{\bm{y}^t} = G(\cdot - t\nu)$, and recall $t_\ell$ from Theorem \ref{thm:pwl}.

By applying Lemma \ref{lem:cont} (and translations thereof), we see that 
\[ g^0 \mapsto g^{t_\ell}\]
is a composition of finitely many continuous mappings (w.r.t. $\phi$) around $G= G^0$, and since $G^{t_\ell},g^{t_\ell} \in \mathcal{T}$, we may apply Lemma \ref{lem:Equiv} and guarantee that $||\bm{v}(G^{t_\ell}) - \bm{v}(g^{t_\ell})||_\infty$ is arbitrarily small for all $\phi(G,g^0)$ small enough.

\medskip

For the rest of the proof, let $t \geq t_\ell$ so that $g^t \in \mathcal{T}$. 
The next step is to prove the existence of constants $C_1 >0$ and $0<r<1$ such that
\begin{equation}
||\bm{v}(g^t) - \bm{v}(G)||_\infty < C_1 r^t. \label{eq:SpeedV}
\end{equation}
When $\beta \geq 1$, \eqref{eq:SpeedV} is vacuously true since $\bm{v}(G)$ and $\bm{v}(g^t)$ are both empty, so now let $\beta \in (0,1)$. 
From Lemma \ref{lem:sigma_l} (and since $1-j\beta < 1$ for $j=0,\ldots,k$) we can see from (\ref{eq:sigmaV}) that
there exists a universal constant $C'$ such that
\begin{equation}\label{eq:SigmasSpeed}
\left\lvert\Sigma(\bm{v}(g^t)) - \Sigma\right\rvert \leq C'||\bm{v}(g^t) - \bm{v}(G)||_\infty.
\end{equation}
Thus, recalling the definition of $\Delta_i$ from \eqref{eq:DeltaClasses}, if $||\bm{v}(G) - \bm{v}(g^t)||_\infty$ is small enough, then \eqref{eq:SigmasSpeed} guarantees that $\bm{v}(g^t)$ and $\bm{v}(G)$ belong to the same set $\Delta_i$ (recall that we specify that $\gamma \neq \gamma_c$, and so $\bm{v}(G)$ is an interior point of the set $\Delta_i$ it belongs to). Lemma \ref{lem:affine} and Lemma \ref{lem:dis}, along with Lemma \ref{lem:AF} when $\gamma < \gamma_c$ and Lemma \ref{lem:A0} when $\gamma > \gamma_c$, \, $\beta^{-1} \not\in \N$ guarantee that if $||\bm{v}(g^{t_\ell}) - \bm{v}(G)||_\infty$ is small enough, then $||\bm{v}(g^t) - \bm{v}(G)||_\infty$ remains small enough such that for all $t \geq t_\ell$, $\bm{v}(g^t)$ and $\bm{v}(G)$ belong to the same set $\Delta_i$ and that \eqref{eq:SpeedV} holds. 

\medskip

For all values of $\beta > 0$ and $t \geq t_\ell$, since $\bm{v}(G)$ and $\bm{v}(g^t)$ belong to the same set $\Delta_i$, we see from Lemma \ref{lem:affine} and \eqref{eq:SigmasSpeed} that $|(U^{t+1} - U^t)-\nu| =0$ when $\gamma \geq \gamma_c(\beta)$ and is bounded by a constant times $||\bm{v}(g^t) - \bm{v}(G)||_\infty$ when $\gamma < \gamma_c(\beta)$.
From \eqref{eq:SpeedFront}, if we let 
\[c := \sum_{t=0}^\infty\left( (U^{t+1} - U^t) - \nu\right) < \infty,\]
then there exists a constant $C_2 > 0$ such that 
\begin{equation}\label{eq:SpeedFront} 
\left\lvert (U^t - c) - t\nu \right\rvert =\left\lvert \sum_{s=t+1}^\infty \left( (U^{t+1} - U^t) - \nu\right) \right\rvert < C_2 r^t.
\end{equation}

\medskip

Since $g^t \in \mathcal{T}$ for all $t \geq t_\ell$, let $\bm{x}^t \in \R^{K+1}$ be a non-increasing vector for which $g^t = f_{\bm{x}^t}$. For all $i=0, 1, \ldots, k+1$, from \eqref{eq:SpeedV}, \eqref{eq:SpeedFront} and \eqref{eq:v0}, along with the definition of $\bm{v}(g^t)$ in \eqref{eq:defv(g)}, 
\begin{equation}\label{eq:BoundxFromv}
|(x_i^t-c) -y_i^t | \leq |(U^t - c) - t\nu| + \left\lvert \left\langle \bm{u}, \bm{v}(g^t) - \bm{v}(G) \right\rangle \right\rvert + \sum_{j=0}^i|\bm{v}(g^t)_j - \bm{v}(G)_j| \leq C_3r^t 
\end{equation}
for a constant $C_3$. Since $\bm{v}(g^t)$ belongs to  $\Delta_{-1}$ or $\Delta_0$ (and so $\sigma^\ast(r(g^t)) > x_1$), by Lemma \ref{lem:operators}, 
\begin{equation}\label{eq:Boundx_jFromx_j}
x_{j+1}^{t+1} = x_j^t \,\, \text{ for } \,\, j =1,2,\ldots,K-1.
\end{equation}
Starting with the case $K \in \N$, we see from \eqref{eq:BoundxFromv} and \eqref{eq:Boundx_jFromx_j} that for all $t$ large enough, $||(\bm{x}^t - c(1,1,\ldots,1)) - \bm{y}^t||_\infty \leq C'r^t$, and applying Lemma \ref{lem:Contf} and a translation yields
\[ \phi\left(g^t(\cdot + t\nu + c), G\right) \leq Cr^t\]
for some constant $C$. For $K = \infty$, the argument is similar with $K$ replaced by $M$ as defined in Lemma \ref{lem:Contf}.  
\end{proof}

\subsubsection{Proofs of Continuity Lemmas}\label{sec:ContinuityLemmas}

In this section we provide proofs to the continuity lemmas \ref{lem:cont} -- \ref{lem:Contf}.

\begin{proof}[Proof of Lemma \ref{lem:Contf}]
We start with $K \in N$. For $||\bm{x} - \bm{y}||_\infty$ small enough, we may guarantee that on $\mbox{Supp}(G) \cap \mbox{Supp}(f_{\bm{x}})$, the slopes of the linear sections of $G$ and $f_{\bm{x}}$ differ by at most $\beta$, in which case the slopes only differ on the intervals $(y_j \wedge x_j, y_j \vee x_j)$. Thus for $x \in \mbox{Supp}(G) \cap \mbox{Supp}(f_{\bm{x}})$, 
\begin{multline*}
|G(x) - f_{\bm{x}}(x)| \leq \int_x^{x_0 \wedge y_0}|G'(z) - f'_{\bm{x}}(z)|dz + \int_{x_0 \wedge y_0}^{x_0\vee y_0} dz \leq \beta\sum_{j=1}^K|y_j - x_j| + |y_0 - x_0|.
\end{multline*}
For $x \not\in \mbox{Supp}(G) \cap \mbox{Supp}(f_{\bm{x}})$, since both $G, f_{\bm{x}}$ are concave and $0\vee G,0\vee f_{\bm{x}}$ are continuous, we see immediately that $|0\vee G(x) - 0\vee f_{\bm{x}}(x)|$ is bounded by the maximum value of this difference in $\mbox{Supp}(G) \cap \mbox{Supp}(f_{\bm{x}})$, completing the argument. \\

For $K = \infty$, the argument is similar to the one above but with $K$ replaced by $M$. The condition that $\displaystyle{\max_{j=0, \ldots, M}|y_j - x_j|}$ be small enough is used to guarantee that $L(f_{\bm{x}}) > x_M$, and so $0\vee f_{\bm{x}}$ is continuous.
\end{proof}

\begin{proof}[Proof of Lemma \ref{lem:cont}]
Before dwelling into the proof, the careful reader may notice that the transformation $s\circ r $ is not continuous everywhere in $\mathcal{D}$. However, around $G$, we will sandwich the function with two piecewise linear functions to control the effect of the transformation.

For conciseness, we assume that $K<\infty$. The continuity of the following maps follow from Lemmas \ref{lem:operators} and \ref{lem:Contf}.

\medskip

\noindent (1): $(\sigma, \bm{x}) \mapsto f_{\psi_\sigma(\bm{x})}$ is continuous at $(\Sigma, \bm{y})$.

\medskip

\noindent (2): For non-increasing $\bm{x} \in \R^{K+1}$ let $\sigma_{\bm{x}} := \sigma^\ast(r(f_{\bm{x}}))$, then $\bm{x} \mapsto \sigma_{\bm{x}}$ is continuous at $\bm{y}$. 

\medskip

Let $\sigma^\ast := \sigma^\ast(r(g))$. To proceed, it is evident that for all $\delta$ we may find non-increasing vectors $\bm{y}^{\pm\delta} \in \R^{K+1}$ near $\bm{y}$ such that $f_{\bm{y}^{-\delta}} \leq g \leq f_{\bm{y}^{+\delta}}$ for all $\phi(G,g) < \delta$. 
For all $\sigma \in \R$, the operator $s_\sigma \circ r $ maintains the inequality
\begin{equation}\label{eq:Contsrfeta}
s_{\sigma} \circ r (f_{\bm{y}^{-\delta}}) \leq s_\sigma \circ r(g) \leq s_{\sigma} \circ r (f_{\bm{y}^{+\delta}}),
\end{equation}
from which we may also conclude that $\sigma_{\bm{y}^{-\delta}} \leq \sigma^\ast \leq \sigma_{\bm{y}^{+\delta}}$. By applying (2) to $\bm{y}^{\pm\delta}$, we see that we may choose $\delta$ small enough to guarantee that $|\Sigma - \sigma^\ast|$ is arbitrarily small. From this arbitrarily small bound on $|\Sigma - \sigma^\ast|$ as well as applying (1) to $(\sigma^\ast, \bm{y}^{\pm \delta})$, we conclude from \eqref{eq:Contsrfeta} the statement of the lemma.
\end{proof}

\begin{proof}[Proof of Lemma \ref{lem:Equiv}]
Let $\bm{x}, \bm{x}' \in \R^{K+1}$ such that $g = f_{\bm{x}}$ and $g' = f_{\bm{x}'}$. The upper edges of the functions differ by $|x_0 - x_0'|$, and so 
\[\phi(f_{\bm{x}}, f_{\bm{x}'}) \geq |x_0 - x_0'|.\]
Fix $j=1, \ldots, k+1$, and assume without loss of generality that $x_j < x_j'$. We have by definition of the functions that for all $x_j < z \leq x_0\wedge x_j'$, 
\[f_{\bm{x}'}(x_j) - f_{\bm{x}'}(z) \leq (1-j\beta)(z - x_j), \qquad f_{\bm{x}}(x_j) - f_{\bm{x}}(z) \geq (1-(j-1)\beta)(z - x_j),\]
Thus, 
\begin{multline}\label{eq:Equiv1}
2\phi(f_{\bm{x}}, f_{\bm{x}'}) \geq |f_{\bm{x}}(z) - f_{\bm{x}'}(z)| + |f_{\bm{x}}(x_j) - f_{\bm{x}'}(x_j)| \\ \geq \left(f_{\bm{x}}(x_j) - f_{\bm{x}}(z)\right) - \left(f_{\bm{x}'}(x_j) - f_{\bm{x}'}(z)\right) \geq \beta(z - x_j).
\end{multline}
If $x_j' \leq x_0$, then \eqref{eq:Equiv1} implies $\phi(f_{\bm{x}}, f_{\bm{x}'}) \geq \beta(x_j' - x_j)/2$. If $x_0 < x_j'$, then with \eqref{eq:Equiv1},
\begin{multline*}
3\phi(f_{\bm{x}}, f_{\bm{x}'}) \geq |f_{\bm{x}}(x_0) - f_{\bm{x}'}(x_0)| + |f_{\bm{x}}(x_j) - f_{\bm{x}'}(x_j)| + |x_0' - x_0| \\
\geq \beta(x_0 - x_j) + (x_j' - x_0) \geq (1\wedge\beta)(x_j' - x_j),
\end{multline*}
and so $\phi(f_{\bm{x}}, f_{\bm{x}'})\geq (1\wedge \beta)(x_j' - x_j)/3$. We therefore have a constant $c$ such that
\[\phi(f_{\bm{x}}, f_{\bm{x}'}) \geq c \max_{j=0,\ldots,k+1}|x_j - x_j'|.\]
The statement of the lemma then follows from the above, recalling the definition of $\bm{v}(g)$ from \eqref{eq:defv(g)}, and the triangle inequality.
\end{proof}

\section*{Acknowledgement}

E.S. gratefully acknowledges support from the FWF project PAT3816823 “Waves in Population Genetics”.


\begin{thebibliography}{abcd:99}

\bibitem{BeGo:10} Jean Bérard and Jean-Baptiste Gouéré, Brunet-Derrida behavior of branching-selection particle systems on the line, {\it Comm. Math. Phys.}, 298(2):323--342, 2010.

\bibitem{BeMa:14} Jean Bérard and Pascal Maillard, The limiting process of N-particle branching random walk with polynomial tails, {\it Electron. J. Probab.}, 19:1--17, 2014.


\bibitem{BeBP:19} Julien Berestycki, Éric Brunet, and Sarah Penington, Global existence for a free boundary problem of Fisher–KPP type, {\it Nonlinearity}, 32(10):3912, 2019.

\bibitem{BoSz:98} Eric Bolthausen and Alain-Sol Sznitman, On ruelle's probability cascades and an abstract cavity model, {\it Comm. Math. Phys.,} 197(2):247--276, 1998. 

\bibitem{BrDe:97} \'Eric Brunet and Bernard Derrida, Shift in the velocity of a front due to a cut-off, {\it Phys. Rev. E}, 56:2597--2604, 1997.

\bibitem{BrDe:99}\'Eric Brunet and Bernard Derrida, Microscopic models of traveling wave equations, {\it Comput. Phys. Commun.}, 121:376--381, 1999.

\bibitem{BDMM:07} \'Eric Brunet, Bernard Derrida, Alfred H. Mueller, and St\'ephane Munier, Effect of selection on ancestry: An exactly soluble case and its phenomenological generalization, {\it Phys. Rev. E.}, 76(4):041104, 2007. 

\bibitem{BrRW:08} {\'E}ric Brunet, Igor M. Rouzine, and Claus O. Wilke, The stochastic edge in adaptive evolution, {\it Genetics}, 179(1):603--620, 2008

\bibitem{CoMa:17} Aser Cortines and Bastien Mallein, A $N$-branching random walk with random selection, {\it ALEA Lat. Am. J. Probab. Math. Stat.}, 92:117--137, 2017.

\bibitem{DFPS:17} Anna De Masi, Pablo A. Ferrari, Errico Presutti, and Nahuel Soprano-Loto, Hydrodynamics of the N-BBM process, {\it In International workshop on Stochastic Dynamics out of Equilibrium}, 523--549, 2017

\bibitem{DeFi:07} Michael M. Desai, Daniel S. Fisher, Beneficial mutation selection balance and the effect of linkage on positive selection, {\it Genetics}, 176(3):1759--1798, 2007

\bibitem{DuPK:07} Freddy Dumortier, Nikola Popovi\'c, and Tasso J. Kaper. The critical wave speed for the Fisher--Kolmogorov--Petrowskii--Piscounov equation with cut-of, {\it Nonlinearity}, 20(4):855, 2007.

\bibitem{DuRe:11} Rick Durrett and Daniel Remenik, Brunet–Derrida particle systems, free boundary problems and Wiener–Hopf equations. {\it Ann. Probab.}, 39(6):2043–-2078, 2011.

\bibitem{ENES:20} Gustaf Enestr\"om, Remarque sur un th\'eor\`eme relatif aux racines de l'equation $a_nx^n + a_{n-1}x^{n-1} + \cdots a_1x + a_0 = 0$ o\`u tous les coefficients $a$ sont r\'eels et positifs, {\it T\^ohoku Math. J.}, 18:34--36, 1920.

\bibitem{FELL:71} William Feller, {\it An Introduction to Probability Theory and its Applications, Vol II}, Second edition, John Wiley \& Sons, Inc., 1971.

\bibitem{Good:84} Charles AE Goodhart, Problems of monetary management: the UK experience, {\it Monetary theory and practice: The UK experience}, 91--121, 1984.

\bibitem{HOJO:13} Roger A. Horn and Charles R. Johnson, {\it Matrix Analysis}, Second edition, Cambridge University Press, 2013.

\bibitem{KAKE:12} S\^oichi Kakeya, On the limits of roots of an algebraic equation with positive coefficients, {\it T\^ohoku Math. J.}, 2:140--142, 1912.

\bibitem{MaWi:01} Robert H. MacArthur and Edward O. Wilson, The theory of island biogeography, Princeton university press, 2001

\bibitem{MCDI:98} Colin McDiarmid, Concentration, In {\it Probabilistic methods for algorithmic discrete mathematics,} 196--248, Springer, 1998.

\bibitem{MPST:25} Bastien Mallein, Francesco Paparella, Emmanuel Schertzer, and Zs{\'o}fia Talyig{\'a}s, Selection of the fittest or selection of the luckiest: the emergence of Goodhart's law in evolution, arXiv preprint arXiv:2503.21849, 2025.

\bibitem{MuMQ:08} Carl Mueller, Leonid Mytnik, and Jeremy Quastel, Small noise asymptotics of traveling waves, {\it Markov Process. Related Fields}, 14(3):333--342, 2008.

\bibitem{NeHa:13} Richard A. Neher and Oskar Hallatschek,  Genealogies of rapidly adapting populations, {\it Proc. Natl. Acad. Sci. USA}, 110 (2):437--442, 2013.

\bibitem{Pitm:99} Jim Pitman, Coalescents with multiple collisions, {\it Ann. Probab.}, 27(4):1870--1902, 1999.

\bibitem{PiYo:97} Jim Pitman and Marc Yor, The two-parameter Poisson-Dirichlet distribution derived from a stable subordinator, {\it Ann. Probab.}, 25(2):855--900, 1997. 

\bibitem{ScWe:23} Emmanuel Schertzer and Alejandro H. Wences, Relative vs absolute fitness in a population genetics model. How stronger selection may promote genetic diversity, Preprint arXiv:2301.07762, 2023.

\bibitem{Schw:03} Jason Schweinsberg, Coalescent processes obtained from supercritical Galton-Watson processes. {\it Stochastic Process. Appl.}, 106(1):107--139, 2003.

\bibitem{Schw:17} Jason Schweinsberg, Rigorous results for a population model with selection I: evolution of the fitness distribution, {\it Electron. J. Probab.}, 22:1--94, 2017.

\bibitem{Schb:17} Jason Schweinsberg, Rigorous results for a population model with selection II: genealogy of the population, {\it Electron. J. Probab.}, 22:1--54, 2017.


\end{thebibliography}
\end{document}